\newcommand{\Z}{{\textsf{\textup{Z}}}}
\newtheorem{thm}{Theorem}
\newtheorem{cor}[thm]{Corollary}
\newtheorem{defi}[thm]{Definition}
\newtheorem{rem}[thm]{Remark}
\newtheorem{nota}[thm]{Notation}
\newtheorem{princ}[thm]{Principle}
\newtheorem{ack}[thm]{Acknowledgement}
\newtheorem*{tempo*}{Template}
\newcommand\be{\begin{equation}}
\newcommand\ee{\end{equation}} 
\def\blambda{\pmb{\lambda}}
\def\bdefi{\begin{defi}}
\def\edefi{\end{defi}}
\def\bnota{\begin{nota}\rm}
\def\enota{\end{nota}}
\def\FIVE{\Pi_{1}^{1}\text{-\textup{\textsf{CA}}}_{0}}
\def\SIX{\Pi_{2}^{1}\text{-\textsf{\textup{CA}}}_{0}}
\def\SIXK{\Pi_{k}^{1}\text{-\textsf{\textup{CA}}}_{0}^{\omega}}
\def\ATR{\textup{\textsf{ATR}}}
\def\ZF{\textup{\textsf{ZF}}}
\def\osc{\textup{\textsf{osc}}}
\def\L{\textsf{\textup{L}}}
 \def\r{\mathbb{r}}
\def\RCA{\textup{\textsf{RCA}}}
\def\({\textup{(}}
\def\){\textup{)}}
\def\RCAo{\textup{\textsf{RCA}}_{0}^{\omega}}
\def\ACAo{\textup{\textsf{ACA}}_{0}^{\omega}}
\def\WKL{\textup{\textsf{WKL}}}
\def\bye{\end{document}}
\def\N{{\mathbb  N}}
\def\Q{{\mathbb  Q}}
\def\R{{\mathbb  R}}
\def\SS{\textup{\textsf{S}}}
\def\di{\rightarrow}
\def\asa{\leftrightarrow}
\def\ACA{\textup{\textsf{ACA}}}
\def\QFAC{\textup{\textsf{QF-AC}}}
\def\PHP{\textup{\textsf{PHP}}}
\def\osc{\textup{\textsf{osc}}}
\def\alt{\textup{\textsf{alt}}}
\def\enum{\textup{\textsf{enum}}}
\def\cocode{\textup{\textsf{cocode}}}
\def\NIN{\textup{\textsf{NIN}}}
\def\BCT{\textup{\textsf{BCT}}}
\def\IND{\textup{\textsf{IND}}}
\def\eps{\varepsilon}
\def\ECF{\textup{\textsf{ECF}}}
\numberwithin{equation}{section}
\numberwithin{thm}{section}
\begin{document}
\title{Approximation Theorems Throughout Reverse Mathematics}
%\author{Dag Normann}
%\address{Department of Mathematics, The University of Oslo, Norway} 
%%P.O. Box 1053, Blindern N-0316 Oslo
%\email{dnormann@math.uio.no}
\author{Sam Sanders}
\address{Department of Philosophy II, RUB Bochum, Germany}
\email{sasander@me.com}
\keywords{Reverse Mathematics, higher-order arithmetic, Bernstein polynomials.}
\subjclass[2010]{03B30, 03F35}
\begin{abstract}
Reverse Mathematics (RM for short) is a program in the foundations of mathematics where the aim is to find the minimal axioms
needed to prove a given theorem of ordinary mathematics.  Generally, the minimal axioms are {equivalent} to the theorem at hand, assuming a weak logical system called the {base theory}.  
Moreover, many  theorems are either provable in the base theory or equivalent to one of four logical systems, together called the \emph{Big Five}.  
For instance, the \emph{Weierstrass approximation theorem}, i.e.\ that a continuous function can be approximated uniformly by a sequence of polynomials, has been classified in RM as being equivalent to \emph{weak K\"onig's lemma}, the second Big Five system.  
In this paper, we study approximation theorems for \emph{discontinuous} functions via Bernstein polynomials from the literature.   
We obtain many equivalences between the latter and weak K\"onig's lemma.
We also show that {slight} variations of these approximation theorems fall {far} outside of the Big Five but fit in the recently developed RM of new `big' systems, namely the uncountability of $\R$, 
the enumeration principle for countable sets, the pigeon-hole principle for measure, and the Baire category theorem. 
%In conclusion, one equivalence in second-order RM, namely for the Weierstrass approximation theorem, gives rise to many equivalences in higher-order RM, and we hope the case study in this paper can serve as a kind of template. 
\end{abstract}

\setcounter{page}{0}
\tableofcontents
\thispagestyle{empty}
\newpage

\maketitle
\thispagestyle{empty}

\vspace{-2mm}
\section{Introduction and preliminares}\label{intro}

\subsection{Aim and motivation}\label{sintro}
The aim of the program \emph{Reverse Mathematics} (RM for short; see Section \ref{prelim} for an introduction) is to find the minimal axioms needed to prove a given theorem of ordinary mathematics.
Generally, the minimal axioms are {equivalent} to the theorem at hand, assuming a weak logical system called the {base theory}.  
The \emph{Big Five phenomenon} is a central topic in RM, as follows. 
\begin{quote}
[...] we would still claim that the great majority of the theorems from classical mathematics are equivalent to one of the big five. This phenomenon is still quite striking. Though we have some sense of why this phenomenon occurs, we really do not have a clear explanation for it, let alone a strictly logical or mathematical reason for it. The way I view it, gaining a greater understanding of this phenomenon is currently one of the driving questions behind reverse mathematics. (see \cite{montahue}*{p.\ 432})
\end{quote}
A natural example is the equivalence between the Weierstrass approximation theorem and weak K\"onig's lemma from \cite{simpson2}*{IV.2.5}.
In \cite{dagsamXIV}, Dag Normann and the author greatly {extend} the Big Five phenomenon by establishing numerous equivalences involving the \textbf{second-order} Big Five systems on one hand, 
%\emph{weak K\"onig's lemma} and e.g.\ \emph{arithmetical comprehension} on one hand 
and well-known \textbf{third-order} theorems from analysis about possibly discontinuous functions on the other hand, working in Kohlenbach's \emph{higher-order} RM (see Section \ref{prelim}).  
Moreover, following \cite{dagsamXIV}*{\S2.8}, {slight} variations/generalisations of these third-order theorems cannot be proved from the Big Five and {much} stronger systems.  
Nonetheless, an important message of \cite{dagsamXIV} is just how similar second- and higher-order RM can be, as the latter reinforces the existing Big Five with many further examples
%as the latter greatly contributes to the `Big Five' of the former. 

\smallskip

In this paper, we develop the RM-study of approximation theorems, often involving \emph{Bernstein polynomials} $B_{n}$, defined as follows:
\be\label{laquel}
B_n(f,x):=\sum_{k=0}^{n} f\big({k}/{n}\big)p_{n,k}(x) \textup{ where } p_{n, k}(x):= \binom{n}{k} x^k (1-x)^{n-k}. 
\ee
Weierstrass (\cite{weiapprox}) and Borel (\cite{opborrelen}*{p.\ 80}) already study polynomial approximations while Bernstein (\cite{bern1}) provides the first explicit polynomials, namely $B_{n}$ as above, and showed that for continuous $f:[0,1]\di \R$, we have for all $x\in [0,1]$ that
\be\label{tink}
f(x)=\lim_{n\di \infty}B_{n}(f, x), 
\ee
where the convergence is uniform.
Approximation results for discontinuous functions are in \cites{overdeberg, klo, gaanwekatten2, piti} and\footnote{Picard studies approximations of Riemann integrable $f:[0,1]\di \R$ in \cite{piti}*{p.\ 252} \emph{related} to Berstein polynomials following \cite{klo}*{p.\ 64}.} take the form \eqref{tink} for points of continuity and the form \eqref{tink2} if the left and right limits $f(x-)$ and $f(x+)$ exist at $x\in (0,1)$: 
\be\label{tink2}\textstyle
\frac{f(x+)+f(x-)}{2}=\lim_{n\di \infty}B_{n}(f, x).
\ee
We stress that for general $f:[0,1]\di \R$, \eqref{tink} may hold at $x$ where $f$ is \emph{dis}continuous, i.e.\ \eqref{tink} is much weaker than continuity at $x$. 
For instance, Dirichlet's function $\mathbb{1_{Q}}$ satisfies \eqref{tink} for all rationals and is discontinuous everywhere. 

\smallskip

In this paper, we develop the RM-study of approximation theorems based on \eqref{tink} and \eqref{tink2} for {regulated}, semi-continuous, cliquish, and Riemann integrable functions. Scheeffer and Darboux study regulated functions in \cites{scheeffer, darb} without naming this class, while Baire introduced semi-continuity in \cite{beren2}.  Hankel studies cliquish functions using an equivalent definition in \cite{hankelwoot}.
Thus, there is plenty of historical motivation for our RM-study.  

\smallskip

In particular, we show that many approximation theorems for discontinuous functions are equivalent to weak K\"onig's lemma in Theorem \ref{LEMCOR}.  
We also show that {slight} variations fall {far} outside of the Big Five, but do yield equivalences for four new `Big' systems, namely \emph{the uncountability of $\R$} (\cite{samBIG}), the \emph{enumeration principle for countable sets} (\cite{dagsamXI}), the \emph{Baire category theorem} (\cite{samBIG2}), and the \emph{pigeon-hole principle} for measure (\cite{samBIG2}).  
These new Big systems boast equivalences involving principles based on pointwise continuity and it is perhaps surprising that the same holds for the weaker condition \eqref{tink}.  
Moreover, the equivalences for these new Big systems are \emph{robust} as follows:
\begin{quote}
A system is \emph{robust} if it is equivalent to small perturbations of itself. (\cite{montahue}*{p.\ 432}; emphasis in original)
\end{quote}
In particular, our equivalences generally go through for many variations of the function classes involved, with examples in Theorems \ref{tomma} and \ref{tach}. 
There is an apparent tension here with the earlier observation that slight variations in the theorems can greatly affect the logical strength required to prove them.  
We attempt to explain this phenomenon in Remark \ref{second-order-ish}.  

\smallskip

Finally, in Section~\ref{prelim}, we provide an introduction to RM; we introduce essential higher-order concepts in Sections \ref{lll}-\ref{couse}.
%We introduce some higher-order concepts in Section~\ref{main}.  
We establish the equivalences for weak K\"onig's lemma in Section \ref{worg} and the equivalences for the 
`new' big systems in Section \ref{bigger}-\ref{piho}.  

\subsection{Preliminaries and definitions}\label{helim}
We briefly introduce {Reverse Mathematics} in Section \ref{prelim}.
We introduce some essential axioms (Section \ref{lll}) and definitions (Section~\ref{cdef}) needed in the below.  
We discuss the definition of countable set in higher-order RM in Section \ref{couse}. 

\subsubsection{Reverse Mathematics}\label{prelim}
Reverse Mathematics (RM hereafter) is a program in the foundations of mathematics initiated around 1975 by Friedman (\cites{fried,fried2}) and developed extensively by Simpson and others (\cite{simpson2, simpson1,damurm}).  
The aim of RM is to identify the minimal axioms needed to prove theorems of ordinary, i.e.\ non-set theoretical, mathematics. 

\smallskip

First of all, we refer to \cite{stillebron} for a basic introduction to RM and to \cite{simpson2, simpson1,damurm} for an overview of RM.  We expect basic familiarity with RM, in particular Kohlenbach's \emph{higher-order} RM (\cite{kohlenbach2}) essential to this paper, including the base theory $\RCAo$, also introduced in Section \ref{rmbt}.   An extensive introduction can be found in e.g.\ \cites{dagsamIII, dagsamV, dagsamX} and elsewhere.  % and Section \ref{rmbt}. 

\smallskip

Secondly, as to notations, we have chosen to include a brief introduction as a technical appendix, namely Section \ref{RMA}.  
All undefined notions may be found in the latter, while we do point out here that we shall sometimes use common notations from type theory.  For instance, the natural numbers are type $0$ objects, denoted $n^{0}$ or $n\in \N$.  
Similarly, elements of Baire space are type $1$ objects, denoted $f\in \N^{\N}$ or $f^{1}$.  Mappings from Baire space $\N^{\N}$ to $\N$ are denoted $Y:\N^{\N}\di \N$ or $Y^{2}$.   

\smallskip

Thirdly, the main topic of this paper is the RM-study of real analysis, for which the following notations suffice.  
Both in $\RCA_{0}$ and $\RCAo$, real numbers are given by Cauchy sequences (see Definition \ref{keepintireal} and \cite{simpson2}*{II.4.4}) and equality between reals `$=_{\R}$' has the same meaning in second- and higher-order RM.  
Functions on $\R$ are defined as mappings $\Phi$ from $\N^{\N}$ to $\N^{\N}$ that respect real equality, i.e.\ $x=_{\R}y\di \Phi(x)=_{\R}\Phi(y)$ for any $x, y\in \R$, which is also called \emph{function extensionality}.  

\smallskip

Fourth, experience bears out that the following fragment of the Axiom of Choice is often convenient when (first) proving equivalences.
This principle can often be omitted by developing a more sophisticated alternative proof (see e.g.\ \cite{dagsamIX}).  
\begin{princ}[$\QFAC^{0,1}$] For any $Y^{2}$, we have:
\be\label{bing}
(\forall n\in \N)(\exists f\in \N^{\N})(Y(f, n)=0 )\di (\exists (f_{n})_{n\in \N})(\forall n\in \N)(Y(f_{n}, n)=0 ).
\ee
\end{princ}
As discussed in \cite{kohlenbach2}*{Remark 3.13}, this principle is not provable in $\ZF$ while $\RCAo+\QFAC^{0,1}$ suffices 
to prove the local equivalence between (epsilon-delta) continuity and sequential continuity, which is also not provable in $\ZF$. 

\smallskip

Finally, the main difference between Friedman-Simpson and Kohlenbach's framework for RM is whether the language is restricted to \emph{second-order} objects or if one allows \emph{third-order} objects.   
An important message of \cite{dagsamXIV} and this paper is that the second-order Big Five are equivalent to third-order theorems concerning possibly discontinuous functions, as is also clear from Theorems \ref{LEMCOR} and \ref{flank}.

\subsubsection{Some comprehension functionals}\label{lll}
In second-order RM, the logical hardness of a theorem is measured via what fragment of the comprehension axiom (broadly construed) is needed for a proof.  
For this reason, we introduce some axioms and functionals related to \emph{higher-order comprehension} in this section.
We are mostly dealing with \emph{conventional} comprehension here, i.e.\ only parameters over $\N$ and $\N^{\N}$ are allowed in formula classes like $\Pi_{k}^{1}$ and $\Sigma_{k}^{1}$.  

\smallskip
\noindent
First of all, the functional $\varphi$ in $(\exists^{2})$ is also \emph{Kleene's quantifier $\exists^{2}$} and is clearly discontinuous at $f=11\dots$ in Cantor space:
%This fact shall be repeated often.  
\be\label{muk}\tag{$\exists^{2}$}
(\exists \varphi^{2}\leq_{2}1)(\forall f^{1})\big[(\exists n^{0})(f(n)=0) \asa \varphi(f)=0    \big]. 
\ee
In fact, $(\exists^{2})$ is equivalent to the existence of $F:\R\di\R$ such that $F(x)=1$ if $x>_{\R}0$, and $0$ otherwise (see \cite{kohlenbach2}*{Prop.\ 3.12}).  
Related to $(\exists^{2})$, the functional $\mu^{2}$ in $(\mu^{2})$ is called \emph{Feferman's $\mu$} (see \cite{avi2}) and may be found -with the same symbol- in Hilbert-Bernays' Grundlagen (\cite{hillebilly2}*{Supplement IV}):
\begin{align}\label{mu}\tag{$\mu^{2}$}
(\exists \mu^{2})(\forall f^{1})\big[ (\exists n)(f(n)=0) \di [f(\mu(f))=0&\wedge (\forall i<\mu(f))(f(i)\ne 0) ]\\
& \wedge [ (\forall n)(f(n)\ne0)\di   \mu(f)=0]    \big]. \notag
\end{align}
We have $(\exists^{2})\asa (\mu^{2})$ over $\RCAo$ (see \cite{kohlenbach2}*{\S3}) and $\ACAo\equiv\RCAo+(\exists^{2})$ proves the same sentences as $\ACA_{0}$ by \cite{hunterphd}*{Theorem~2.5}. 

\smallskip

Secondly, the functional $\SS^{2}$ in $(\SS^{2})$ is called \emph{the Suslin functional} (\cite{kohlenbach2}):
\be\tag{$\SS^{2}$}
(\exists\SS^{2}\leq_{2}1)(\forall f^{1})\big[  (\exists g^{1})(\forall n^{0})(f(\overline{g}n)=0)\asa \SS(f)=0  \big].
\ee
The system $\FIVE^{\omega}\equiv \RCAo+(\SS^{2})$ proves the same $\Pi_{3}^{1}$-sentences as $\FIVE$ by \cite{yamayamaharehare}*{Theorem 2.2}.   
By definition, the Suslin functional $\SS^{2}$ can decide whether a $\Sigma_{1}^{1}$-formula as in the left-hand side of $(\SS^{2})$ is true or false.   We similarly define the functional $\SS_{k}^{2}$ which decides the truth or falsity of $\Sigma_{k}^{1}$-formulas from $\L_{2}$; we also define 
the system $\SIXK$ as $\RCAo+(\SS_{k}^{2})$, where  $(\SS_{k}^{2})$ expresses that $\SS_{k}^{2}$ exists.  
We note that the operators $\nu_{n}$ from \cite{boekskeopendoen}*{p.\ 129} are essentially $\SS_{n}^{2}$ strengthened to return a witness (if existant) to the $\Sigma_{n}^{1}$-formula at hand.  %  if it exists. 

\smallskip

\noindent
Thirdly, full second-order arithmetic $\Z_{2}$ is readily derived from $\cup_{k}\SIXK$, or from:
\be\tag{$\exists^{3}$}
(\exists E^{3}\leq_{3}1)(\forall Y^{2})\big[  (\exists f^{1})(Y(f)=0)\asa E(Y)=0  \big], 
\ee
and we therefore define $\Z_{2}^{\Omega}\equiv \RCAo+(\exists^{3})$ and $\Z_{2}^\omega\equiv \cup_{k}\SIXK$, which are conservative over $\Z_{2}$ by \cite{hunterphd}*{Cor.\ 2.6}. 
Despite this close connection, $\Z_{2}^{\omega}$ and $\Z_{2}^{\Omega}$ can behave quite differently, as discussed in e.g.\ \cite{dagsamIII}*{\S2.2} and Section \ref{mintro}.   
The functional from $(\exists^{3})$ is also called `$\exists^{3}$', and we use the same convention for other functionals.  

\smallskip

Finally, Kleene's quantifier $\exists^{2}$ plays a crucial role throughout higher-order RM.  We recall that $(\exists^{2})$ is equivalent to the existence of a discontinuous function on $\R$ (or $\N^{\N}$) by \cite{kohlenbach2}*{Prop.\ 3.12}, using so-called Grilliot's trick.  Thus, $\neg(\exists^{2})$ is equivalent to Brouwer's theorem, i.e.\ all functions on the reals (and Baire space) are continuous.  We will often make use of the latter fact without explicitly pointing this out.

\subsubsection{Some definitions}\label{cdef}
We introduce some required definitions, stemming from mainstream mathematics.
%We assume the standard `epsilon-delta' definitions of continuity and Riemann integrability to be known.
We note that subsets of $\R$ are given by their characteristic functions as in Definition \ref{char}, well-known from measure and probability theory.
We shall generally work over $\ACAo$ as some definitions make little sense over $\RCAo$.

\smallskip

First of all, we make use the usual definition of (open) set, where $B(x, r)$ is the open ball with radius $r>0$ centred at $x\in \R$.
 We note that our notion of `measure zero' does not depend on (the existence of) the Lebesgue measure.
\bdefi[Sets]\label{char}~
\begin{itemize}
\item A subset $A\subset \R$ is given by its characteristic function $F_{A}:\R\di \{0,1\}$, i.e.\ we write $x\in A$ for $ F_{A}(x)=1$, for any $x\in \R$.
\item A subset $O\subset \R$ is \emph{open} in case $x\in O$ implies that there is $k\in \N$ such that $B(x, \frac{1}{2^{k}})\subset O$.
\item A subset $O\subset \R$ is \emph{RM-open} in case there are sequences of reals $(a_{n})_{n\in \N}, (b_{n})_{n\in \N}$ such that $O=\cup_{n\in \N}(a_{n}, b_{n})$.
\item A subset $C\subset \R$ is \emph{closed} if the complement $\R\setminus C$ is open. 
\item A subset $C\subset \R$ is \emph{RM-closed} if the complement $\R\setminus C$ is RM-open. 
\item A set $A\subset \R$ is \emph{enumerable} if there is a sequence of reals that includes all elements of $A$.
\item A set $A\subset \R$ is \emph{countable} if there is $Y: \R\di \N$ that is injective on $A$, i.e.\
\[
(\forall x, y\in A)( Y(x)=_{0}Y(y)\di x=_{\R}y).  
\]
%\item A set $A\subset \R$ is \emph{strongly countable} if there is $Y: \R\di \N$ that is injective and surjective on $A$; the latter means that $(\forall n\in \N)(\exists x\in A)(Y(x)=n)$.
\item A set $A\subset \R$ is \emph{measure zero} if for any $\eps>0$ there is a sequence of open intervals $(I_{n})_{n\in \N}$ such that $\cup_{n\in \N}I_{n}$ covers $A$ and $\eps>\sum_{n=0}^{\infty}|I_{n}|$. 
\item A set $A\subset \R$ is \emph{dense} in $B\subset \R$ if for $k\in \N,b\in B$, there is $a\in A$ with $|a-b|<\frac{1}{2^{k}}$.
\item A set $A\subset \R$ is \emph{nowhere dense} in $B\subset \R$ if $A$ is not dense in any open sub-interval of $B$.  
\end{itemize}
\edefi
\noindent

As discussed in Section \ref{couse}, the study of regulated functions already gives rise to open sets that 
do not come with additional representation beyond the second item in Definition~\ref{char}.  We will often assume $(\exists^{2})$ from Section \ref{lll} to guarantee that
basic objects like the unit interval are sets in the sense of Def.\ \ref{char}.  

\smallskip

Secondly, we study the following notions, many of which are well-known and hark back to the days of Baire, Darboux, Dini, Jordan, Hankel, and Volterra (\cites{jordel, beren,beren2,darb, volaarde2,hankelwoot,hankelijkheid,dinipi}).  
% (see e.g.\ \cites{nieuwebron, kowalski}).
We use `sup' and other operators in the `virtual' or `comparative' way of second-order RM (see e.g.\ \cite{simpson2}*{X.1} or \cite{browner}).  In this way, a formula of the form `$\sup A>a$' or `$x\in \overline{S}$' makes sense as shorthand\footnote{For instance, `$\sup A> a$' simply abbreviates $(\exists x\in A)( x>a)$, while `$x\in \overline{S}$' means that there is a sequence of elements in $S$ converging to $x$.} for a formula in the language of all finite types, even when $\sup A$ or the closure $\overline{S}$ need not exist in $\RCAo$.  
\bdefi\label{flung} For $f:[0,1]\di \R$, we have the following definitions:
\begin{itemize}
\item $f$ is \emph{upper semi-continuous} at $x_{0}\in [0,1]$ if $f(x_{0})\geq_{\R}\lim\sup_{x\di x_{0}} f(x)$,
\item $f$ is \emph{lower semi-continuous} at $x_{0}\in [0,1]$ if $f(x_{0})\leq_{\R}\lim\inf_{x\di x_{0}} f(x)$,
\item $f$ \emph{has bounded variation} on $[0,1]$ if there is $k_{0}\in \N$ such that $k_{0}\geq \sum_{i=0}^{n} |f(x_{i})-f(x_{i+1})|$ 
for any partition $x_{0}=0 <x_{1}< \dots< x_{n-1}<x_{n}=1  $,
\item $f$ is \emph{regulated} if for every $x_{0}$ in the domain, the `left' and `right' limit $f(x_{0}-)=\lim_{x\di x_{0}-}f(x)$ and $f(x_{0}+)=\lim_{x\di x_{0}+}f(x)$ exist, 
\item $f$ is \emph{cadlag} if it is regulated and $f(x)=f(x+)$ for $x\in [0,1)$,
\item $f$ is a $U_{0}$-function \(\cite{jojo, lorentzg, gofer2, voordedorst}\) if it is regulated and for all $x\in (0,1)$: 
\be\label{zolk}
\min(f(x+), f(x-))\leq f(x)\leq \max(f(x+), f(x-)),
\ee
\item $f$ is \emph{Baire $1$} if it is the pointwise limit of a sequence of continuous functions,
\item $f$ is \emph{Baire 1$^{*}$} if\footnote{The notion of Baire 1$^{*}$ goes back to \cite{ellis} and equivalent definitions may be found in \cite{kerkje}.  
In particular, Baire 1$^{*}$ is equivalent to the Jayne-Rogers notion of \emph{piecewise continuity} from \cite{JR}.} there is a sequence of closed sets $(C_{n})_{n\in \N}$ such $[0,1]=\cup_{n\in \N}C_{n}$ and $f_{\upharpoonright C_{m}}$ is continuous for all $m\in \N$.
%\item $f$ is \emph{countably continuous}\footnote{The notion of countably discontinuity, under a different name, goes back to Lusin (see \cite{novady}).} if there is a sequence of sets $(E_{n})_{n\in \N}$ such $[0,1]=\cup_{n\in \N}E_{n}$ and $f_{\upharpoonright E_{m}}$ is continuous for all $m\in \N$.
\item $f$ is \emph{quasi-continuous} at $x_{0}\in [0, 1]$ if for $ \epsilon > 0$ and any open neighbourhood $U$ of $x_{0}$, 
there is non-empty open ${ G\subset U}$ with $(\forall x\in G) (|f(x_{0})-f(x)|<\eps)$.
\item $f$ is \emph{cliquish} at $x_{0}\in [0, 1]$ if for $ \epsilon > 0$ and any open neighbourhood $U$ of $x_{0}$, 
there is a non-empty open ${ G\subset U}$ with $(\forall x, y\in G) (|f(x)-f(y)|<\eps)$.
\item $f$ is \emph{pointwise discontinuous} if for any $x\in [0,1], k\in \N$ there is $y\in B(x, \frac{1}{2^{k}})$ such that $f$ is continuous at $y$.
\item $f$ is \emph{locally bounded} if for any $x\in [0,1]$, there is $N\in \N$ such that $(\forall y\in B(x, \frac{1}{2^{N}})\cap [0,1])(|f(y)|\leq N)$.
\end{itemize}
\edefi
As to notations, a common abbreviation is `usco', `lsco', and `$BV$' for the first three items.  % and `uqco' (resp.\ lqco) for the final item. 
Cliquishness and pointwise discontinuity on the reals are equivalent, the non-trivial part of which was already observed by Dini (\cite{dinipi}*{\S 63}).
Moreover, if a function has a certain weak continuity property at all reals in $[0,1]$ (or its intended domain), we say that the function has that property.  
The fundamental theorem about $BV$-functions was proved already by Jordan in \cite{jordel}*{p.\ 229}.
\begin{thm}[Jordan decomposition theorem]\label{drd}
A $BV$-function $f : [0, 1] \di \R$ is the difference of  two non-decreasing functions $g, h:[0,1]\di \R$.
\end{thm}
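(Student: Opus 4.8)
The plan is to follow the classical route via the total variation function. Given a $BV$-function $f:[0,1]\di\R$ with bound $k_{0}$ as in Definition~\ref{flung}, define $V_{f}:[0,1]\di\R$ by $V_{f}(0):=0$ and, for $x\in(0,1]$,
\[
V_{f}(x):=\sup\Big\{\textstyle\sum_{i=0}^{n-1}|f(x_{i+1})-f(x_{i})|\ :\ 0=x_{0}<x_{1}<\dots<x_{n}=x\Big\}.
\]
Since $f$ is $BV$, each such sum is $\leq k_{0}$, so $V_{f}(x)$ is the supremum of a non-empty set of reals bounded by $k_{0}$. Once $V_{f}$ is available as a genuine third-order function, I would set $g:=V_{f}$ and $h:=V_{f}-f$, and the theorem reduces to three observations: $f=g-h$ holds by construction; $g$ and $h$ respect real equality because $f$ does and the partitions of $[0,x]$ and $[0,y]$ are in an obvious bijection whenever $x=_{\R}y$; and both $g$ and $h$ are non-decreasing, which is the heart of the matter.

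For the monotonicity, fix $x\leq y$ in $[0,1]$. For each $\eps>0$ the defining supremum furnishes a partition $P$ of $[0,x]$ whose variation sum exceeds $V_{f}(x)-\eps$; appending the point $y$ to $P$ produces a partition of $[0,y]$, whence
\[
V_{f}(y)\geq\big(V_{f}(x)-\eps\big)+|f(y)-f(x)|.
\]
Letting $\eps\di 0$ gives $V_{f}(y)-V_{f}(x)\geq|f(y)-f(x)|\geq0$, so $g=V_{f}$ is non-decreasing; and since $|f(y)-f(x)|\geq f(y)-f(x)$, the same inequality yields $\big(V_{f}(y)-f(y)\big)\geq\big(V_{f}(x)-f(x)\big)$, so $h=V_{f}-f$ is non-decreasing. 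This part is elementary and needs only $\RCAo+(\exists^{2})$ once $V_{f}$ is in hand; in particular no choice is required, since the numerical inequality above is derived for all $\eps>0$ rather than for a single selected partition.

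The main obstacle is the existence of $V_{f}$ itself as a third-order function respecting real equality. Coding a finite partition $0=x_{0}<\dots<x_{n}=x$ as a single type-$1$ object, the assertion ``some partition of $[0,x]$ has variation sum exceeding $q$'' takes the shape $(\exists w^{1})(\dots)$ with a matrix that is arithmetical in $f$ (using $(\exists^{2})$ to evaluate $f$ and compare reals), i.e.\ it is $\Sigma_{1}^{1}$ in $f$; forming the Cauchy representation of the supremum $V_{f}(x)$ therefore amounts to deciding such $\Sigma_{1}^{1}$-formulas, which is precisely what the Suslin functional $\SS^{2}$ supplies. Thus I expect the argument to go through over $\FIVE^{\omega}\equiv\RCAo+(\SS^{2})$ and, a fortiori, over $\Z_{2}^{\Omega}$. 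One cannot cheaply lower this cost by restricting partition points to a fixed countable dense set: the indicator of a single irrational point of $[0,1]$ is a regulated $BV$-function for which every partition with rational inner points has variation sum $0$, while its true total variation equals $2$. Whether full $\Sigma_{1}^{1}$-comprehension is genuinely needed here, or whether $(\exists^{2})$ together with a fragment of choice already suffices for suitably tame subclasses---such as cadlag or $U_{0}$-functions, where $f(x_{0})$ always lies between $f(x_{0}-)$ and $f(x_{0}+)$---is the delicate point, and is closely related to the enumeration-type phenomena studied in the rest of the paper.
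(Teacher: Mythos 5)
Your classical skeleton---define the variation function $V_{f}$, put $g:=V_{f}$ and $h:=V_{f}-f$, and get monotonicity by appending a point to an almost-optimal partition---is exactly the route the paper takes (see the proof of \eqref{LA}$\di$\eqref{LD} in Theorem \ref{tach}, around \eqref{tomb}), and your monotonicity argument is fine as written. The gap is in the step you yourself flag as the main obstacle: the existence of $V_{f}$. The assertion ``some partition of $[0,x]$ has variation sum exceeding $q$'' has the shape $(\exists w^{1})\phi(w,f,q,x)$ where the matrix $\phi$ applies the \textbf{third-order} object $f:[0,1]\di\R$ to the reals coded by $w$. This is \emph{not} a $\Sigma_{1}^{1}$-formula in the sense required by the Suslin functional: as Section \ref{lll} stresses, only parameters over $\N$ and $\N^{\N}$ are allowed there, and $(\SS^{2})$ decides $(\exists g^{1})(\forall n^{0})(f(\overline{g}n)=0)$ only for \emph{type-one} $f$. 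A discontinuous $BV$-function cannot be absorbed into such a parameter (it has no associate/RM-code), and deciding an existential quantifier over $\N^{\N}$ against a genuinely type-two matrix is the job of $\exists^{3}$. So your argument does go through over $\Z_{2}^{\Omega}$, but not over $\FIVE^{\omega}$. Nor is this a repairable defect: your construction is uniform in the endpoint and would equally yield the half-line version, i.e.\ item \eqref{LD} of Theorem \ref{tach}, which is \emph{equivalent} to $\enum$ over $\ACAo+\QFAC^{0,1}$; since $\enum$ implies $\NIN_{[0,1]}$ and the latter is unprovable in $\Z_{2}^{\omega}+\QFAC^{0,1}\supseteq \FIVE^{\omega}+\QFAC^{0,1}$, no amount of conventional comprehension can carry out your step.

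The repair the paper uses is exactly the one your own counterexample points at. You correctly note that rational partition points miss variation concentrated at irrational discontinuities; the remedy is to restrict the supremum in \eqref{tomb} to $\Q$ together with an enumeration of the discontinuity set $D_{f}$. Since $BV$-functions are regulated, $D_{f}=\cup_{n,k}D_{n,k}$ is a union over $\N$ of finite sets as in \eqref{drux}, and $\enum$ converts this into an actual sequence; once $D_{f}$ is enumerated, the supremum ranges over a countable set and $(\exists^{2})$ suffices to form $V_{f}$ and hence $g$ and $h$. This also settles the ``delicate point'' in your last paragraph: the precise cost of the third-order theorem is the enumeration principle (together with Helly's theorem for the $[0,1]$-version, by item \eqref{LE}), not $\Sigma_{1}^{1}$-comprehension; and for tame subclasses such as cadlag or $U_{0}$-functions, where $f$ is recoverable from its values on $\Q$, the rational-partition supremum does work and $(\exists^{2})$ already suffices.
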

\noindent
Theorem \ref{drd} has been studied in RM via second-order codes (\cite{nieyo}).  For a $BV$-function $f:[0, 1]\di \R$,  the \emph{total variation} is defined as
\be\label{tombz}\textstyle
V_{0}^{1}(f):=\sup_{0\leq x_{0}< \dots< x_{n}\leq 1}\sum_{i=0}^{n} |f(x_{i})-f(x_{i+1})|.
\ee
Thirdly, the following sets are often crucial in proofs relating to discontinuous functions, as can be observed in e.g.\ \cite{voordedorst}*{Thm.\ 0.36}.
\bdefi
The sets $C_{f}$ and $D_{f}$ \(if they exist\) respectively gather the points where $f:\R\di \R$ is continuous and discontinuous.
\edefi
One problem with the sets $C_{f}, D_{f}$ is that the definition of continuity involves quantifiers over $\R$.  
In general, deciding whether a given $\R\di \R$-function is continuous at a given real, is as hard as $\exists^{3}$ from Section \ref{lll}.
For these reasons, the sets $C_{f}, D_{f}$ only exist in strong systems.
A solution is discussed in just below.

\smallskip

In this section, we introduce \emph{oscillation functions} and provide some motivation for their use.
We have previously studied usco, Baire 1, Riemann integrable, and cliquish functions using oscillation functions (see \cite{samcsl23, samBIG2}). 
%(Section \ref{preon}), \emph{Baire 1} functions (Section \ref{resn}), and \emph{classical Baire 1} functions (Section \ref{res2n}).
As will become clear, such functions are generally necessary for our RM-study.  % in Section \ref{mainn} and \ref{related}.  

\smallskip

Fourth, the study of regulated functions in \cites{dagsamXI, dagsamXII, dagsamXIII, samBIG} is 
really only possible thanks to the associated left- and right limits (see Definition \ref{flung}) \emph{and} the fact that the latter are computable in $\exists^{2}$.  
Indeed, for regulated $f:\R\di \R$, the formula 
\be\label{figo}\tag{\textup{\textsf{C}}}
\text{\emph{ $f$ is continuous at a given real $x\in \R$}}
\ee
involves quantifiers over $\R$ but is equivalent to the \emph{arithmetical} formula $f(x+)=f(x)=f(x-)$.  
In this light, we can define the set $D_{f}$ -using only $\exists^{2}$- and proceed with the usual (textbook) proofs.  
% many of the 
%textbook proofs now go through.    
%
An analogous approach, namely the study of usco, Baire 1, Riemann integrable, and cliquish functions, was used in \cite{samcsl23, samBIG2}.  
To this end, we used \emph{oscillation functions} as in Definition \ref{oscfn}.  
We note that Riemann, Ascoli, and Hankel already considered the notion of oscillation in the study of Riemann integration (\cites{hankelwoot, rieal, ascoli1}), i.e.\ there is ample historical precedent. 
\bdefi[Oscillation functions]\label{oscfn}
For any $f:\R\di \R$, the associated \emph{oscillation functions} are defined as follows: $\osc_{f}([a,b]):= \sup _{{x\in [a,b]}}f(x)-\inf _{{x\in [a,b]}}f(x)$ and $\osc_{f}(x):=\lim _{k \di \infty }\osc_{f}(B(x, \frac{1}{2^{k}}) ).$
\edefi
 We  stress that $\osc_{f}:\R\di \R$ is \textbf{only}\footnote{To be absolutely clear, the notation `$\osc_{f}$' and the appearance of $f$ therein in particular, is purely symbolic, i.e.\ we do not make use of the fourth-order object $\lambda f.\osc_{f}$ in this paper.} 
 a third-order object, as clearly indicated by its type.   
Now, our main interest in Definition \ref{oscfn} is that \eqref{figo} is equivalent to the \emph{arithmetical} formula $\osc_{f}(x)=0$, assuming the latter function is given.  
Hence, in the presence of $\osc_{f}:\R\di \R$ and $\exists^{2}$, we can define $D_{f}$ and proceed with the usual (textbook) proofs, which is the approach we \emph{often} took in \cite{samcsl23, samBIG2}.   
Indeed, one can generally avoid the use of oscillation functions for usco functions. 

\subsubsection{On countable sets}\label{couse}
In this section, we discuss the correct definition of countable set for higher-order RM, where we hasten to add that `correct' is only meant to express `yields many equivalences over weak systems like the base theory'. 

\smallskip

First of all, the correct choice of definition for mathematical notions is crucial to the development of RM, as can be gleaned from the following quote.  
\begin{quote}
Under the old definition [of real number in \cite{simpson3}], it would be consistent with $\RCA_{0}$ that there exists a sequence of real numbers $(x_{n})_{n\in \N}$ such that $(x_{n}+\pi)_{n\in \N}$ is not a sequence of real numbers. We thank Ian Richards for pointing out this defect of the old definition. Our new definition [of real number in \cite{earlybs}], given above, is adopted in order to remove this defect. All of the arguments and results of \cite{simpson3}
remain correct under the new definition. (\cite{earlybs}*{p.\ 129})
\end{quote}
In short, the early definition of `real number' from \cite{simpson3} was not suitable for the development of RM, highlighting the importance of the right choice of definition.  
Similar considerations exist for the definition of continuous function in constructive mathematics (see \cites{waaldijk, vandebrug}), i.e.\ this situation is not unique to RM. 

\smallskip

Secondly, we focus on identifying the correct definition for `countable set'.  
Now, going back to Hankel (\cite{hankelwoot}), the sets $C_{f}$ and $D_{f}$ of (dis)continuity points of $f:[0,1]\di \R$ play a central role in real analysis as is clear from e.g.\ the Vitali-Lebesgue theorem which expresses Riemann integrability in terms of $C_{f}$.  For regulated $f:[0,1]\di \R$, the set of discontinuity points satisfies $D_{f}=\cup_{k\in \N} D_{k}$ for
\be\label{drux}\textstyle
 D_{k}:=\{ x\in [0,1]: |f(x)-f(x+)|>\frac{1}{2^{k}}\vee |f(x)-f(x-)|>\frac{1}{2^{k}} \},
\ee
where $D_{k}$ is finite via a standard compactness argument.  In this way, $D_{f}$ is countable but we are unable to construct an injection (let alone a bijection) from the former to $\N$ in e.g.\ $\Z_{2}^{\omega}$.      
Hence, we readily encounter countable sets `in the wild', namely $D_{f}$ for regulated $f$, for which the set-theoretic definition based on injections and bijections can apparently not be established in weak logical systems.  
Similarly, while $D_{k}$ is closed, $\Z_{2}^{\omega}$ does not prove the existence of an RM-code for $D_{k}$.  
In this way, theorems about countable (or RM-closed) sets \emph{in the sense of Definition \ref{char}} cannot be applied to $D_{f}$ or $D_{k}$ in fairly strong systems like $\Z_{2}^{\omega}$ and the development of the RM of real analysis therefore seemingly falters. 

\smallskip

The previous negative surprise notwithstanding, \eqref{drux} also provides the solution to our problem: we namely have $D_{f}=\cup_{k\in \N} D_{k}$, i.e.\ the set $D_{f}$ is
\begin{center}
\emph{the union over $\N$ of finite sets}.  
\end{center}
\emph{Moreover}, this property of $D_{f}$ \emph{can} be established in a (rather) weak logical system.  
Thus, we arrive at Definition \ref{hoogzalieleven} which yields \emph{many} equivalences involving the statement \emph{the unit interval is not height-countable} (see Section \ref{bigger} and \cite{samBIG}) on one hand, and basic properties of regulated functions on the other hand.
\begin{defi}\label{hoogzalieleven}
A set $A\subset \R$ is \emph{height-countable} if there is a \emph{height} function $H:\R\di \N$ for $A$, i.e.\ for all $n\in \N$, $A_{n}:= \{ x\in A: H(x)<n\}$ is finite.  % \(Definition \ref{deadd}\). 
\end{defi}
%The previous notion of `height' is mentioned in the context of countability in e.g.\ \cite{demol,vadsiger, royco,komig,hux}.   
Height functions can be found in the modern literature \cites{demol,vadsiger, royco,komig,hux}, but also go back to Borel and Drach circa 1895 (see \cites{opborrelen3, opborrelen4, opborrelen5})
Definition~\ref{hoogzalieleven} amounts to `union over $\N$ of finite sets', as is readily shown in $\ACAo$.

\smallskip

Finally, the observations regarding countable sets also apply \emph{mutatis mutandis} to finite sets.  
Indeed, finite as each $D_{n}$ from \eqref{drux} may be, we are unable to construct an injection to a finite subset of $\N$, even assuming $\Z_{2}^{\omega}$.  
By contrast, one readily\footnote{The standard compactness argument that shows that $D_{k}$ is finite as in Def.\ \ref{deadd} goes through in $\ACAo+\QFAC^{0,1}$ by (the proof of) Theorem \ref{tomma}} shows that $D_{n}$ from \eqref{drux} is finite as in Definition \ref{deadd}.  %which is exactly the same as Definition \ref{deaddint} in Section \ref{intro}.
\begin{defi}[Finite set]\label{deadd}
Any $X\subset \R$ is \emph{finite} if there is $N\in \N$ such that for any finite sequence $(x_{0}, \dots, x_{N})$ of distinct reals, there is $i\leq N$ such that $x_{i}\not \in X$.
\edefi
The number $N$ from Definition \ref{deadd} is called a \emph{size bound} for the finite set $X\subset \R$.
Analogous to countable sets, the RM-study of regulated functions should be based on Definition~\ref{deadd} and \textbf{not} on the set-theoretic definition based on injections/bijections to finite subsets of $\N$ or similar constructs.

\smallskip

In conclusion, Definitions \ref{hoogzalieleven} and \ref{deadd} provide the correct definition of countable set in that they give rise to many RM-equivalences involving basic properties of regulated functions, as established in \cite{sam BIG}. 
By contrast, the set-theoretic definition involving injections and bijections, does not seem to seem suitable for the development of (higher-order) RM.

\section{Equivalences involving weak K\"onig's lemma}\label{worg}
We establish equivalences between $\WKL_{0}$ and approximation theorems for Bernstein polynomials for (dis)continuous functions.  
We sketch similar results for the other Big Five systems $\ACA_{0}$ and $\ATR_{0}$. 
The RM-study of Bernstein polynomial approximation of course hinges on the following set, definable in $\ACAo$ as the defining formula is arithmetical:
\be\label{BF}
B_{f}:=\{ x\in [0,1]: f(x)=\lim_{n\di \infty} B_{n}(f, x)\}.
\ee
Similarly, the left and right limits of regulated functions can be found using $(\exists^{2})$ (see \cite{dagsamXI}*{\S3}), which is another reason why we shall often incorporate the latter in our base theory.  % in Section \ref{laslo}.

\smallskip

First of all, we establish the equivalence between $\WKL_{0}$ and the Weierstrass approximation theorem via Bernstein polynomials (\cite{bern1}), 
as the proof is instructive for that of Theorem \ref{echtdecrux} and essential to that of Theorems \ref{LEMCOR} and \ref{flank}.  
%Moreover, we prove the reversal without making use of codes for continuous functions, an important pedagogical milestone, in our opinion. 
\begin{thm}[$\RCAo$]\label{flahy} The following are equivalent:
\begin{itemize}
\item $\WKL_{0}$,
\item For continuous $f:[0,1]\di \R$ and $x\in (0,1)$, we have 
\be\label{nag}
f(x)=\lim_{n\di \infty }B_{n}(f, x),
\ee 
where the convergence is uniform. 
\end{itemize}
\end{thm}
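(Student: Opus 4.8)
\emph{Set-up and the direction $\WKL_{0}\Rightarrow$ Bernstein convergence.} The plan is to mirror, in the RM setting, Bernstein's classical proof of \eqref{tink}, keeping careful track of which classical facts genuinely consume $\WKL_{0}$; the same skeleton will then be reused in Theorems \ref{LEMCOR} and \ref{flank}. Note first that each $B_{n}(f,\cdot)$ exists already in $\RCAo$, being a finite sum of values of $f$ at rationals. Over $\RCAo$, $\WKL_{0}$ proves that a continuous $f:[0,1]\di\R$ is bounded, say $|f(x)|\leq M$ for $x\in[0,1]$, and uniformly continuous with an explicit modulus $h:\N\di\N$, i.e.\ $|x-y|<2^{-h(k)}\di|f(x)-f(y)|<2^{-k}$ (see \cite{simpson2}*{IV.2.2--IV.2.3}). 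The core estimate is then the classical split: for $x\in[0,1]$ and $\delta>0$,
\[
|f(x)-B_{n}(f,x)|\leq \sum_{|k/n-x|<\delta}|f(x)-f(k/n)|\,p_{n,k}(x)+\sum_{|k/n-x|\geq\delta}|f(x)-f(k/n)|\,p_{n,k}(x),
\]
where the first sum is bounded by $2^{-k'}$ using uniform continuity (with $\delta:=2^{-h(k')}$) and $\sum_{k}p_{n,k}(x)=1$, while the second is bounded by $2M\cdot\frac{1}{4n\delta^{2}}$ via the Chebyshev-type inequality $\sum_{|k/n-x|\geq\delta}p_{n,k}(x)\leq\frac{1}{4n\delta^{2}}$. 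This last inequality follows from the identity $\sum_{k=0}^{n}(k-nx)^{2}p_{n,k}(x)=nx(1-x)\leq n/4$, which is a finitary algebraic fact (expand the square, use $\sum k\,p_{n,k}(x)=nx$ and $\sum k^{2}p_{n,k}(x)=nx+n(n-1)x^{2}$) and hence provable in $\RCAo$. Taking $k':=k+1$ and any $n$ large compared with $M2^{k}/\delta^{2}$ yields $|f(x)-B_{n}(f,x)|\leq 2^{-k}$ for all $x\in[0,1]$; reading off the threshold as a function of $k$ produces the required modulus of uniform convergence.

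\emph{The reversal: Bernstein convergence $\Rightarrow\WKL_{0}$.} Here I would extract boundedness of an arbitrary continuous $f:[0,1]\di\R$ and invoke the known characterisation. Apply the hypothesis with accuracy $2^{0}=1$: there is $N\in\N$ with $|f(x)-B_{N}(f,x)|\leq 1$ for all $x\in[0,1]$. Since $p_{N,k}(x)\geq 0$ and $\sum_{k=0}^{N}p_{N,k}(x)=1$ on $[0,1]$, the value $B_{N}(f,x)$ is a convex combination of $f(0),f(1/N),\dots,f(1)$, so $|B_{N}(f,x)|\leq M:=\max_{k\leq N}|f(k/N)|$, a number computable from finitely many values of $f$. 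Hence $|f(x)|\leq M+1$ for all $x\in[0,1]$, i.e.\ every continuous function on $[0,1]$ is bounded, and this implies $\WKL_{0}$ over $\RCAo$ by \cite{simpson2}*{IV.2.2}. (Alternatively, the sequence $(B_{n}(f,\cdot))_{n\in\N}$ exhibits $f$ as a uniform limit of polynomials, so the hypothesis immediately gives the Weierstrass approximation theorem and one cites \cite{simpson2}*{IV.2.5}; the boundedness route is preferred as being self-contained and closer to the arguments reused later.)

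\emph{Expected main obstacle.} There is no deep obstruction: the proof is essentially bookkeeping. The point needing the most care is the forward direction, where one must confirm that exactly the two $\WKL_{0}$-provable facts — a uniform bound $M$ and a modulus $h$ of uniform continuity — are what is used, and that the variance/Chebyshev estimate is genuinely available in $\RCAo$ (which it is, being a finite-sum identity), so that the modulus of uniform convergence is obtained effectively from $M$ and $h$. This is precisely the template for Theorems \ref{LEMCOR} and \ref{flank}: there the two-sided split of the sum is replaced by a split governed by the relevant weak-continuity hypothesis (regulated, semi-continuous, cliquish, or Riemann integrable), and the bound $M$ and the modulus are recovered from that hypothesis together with $\WKL_{0}$ (and $(\exists^{2})$ where the functions are genuinely third-order).
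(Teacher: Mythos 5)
Your proposal is correct and follows essentially the same route as the paper's: the forward direction is the standard Bernstein argument combining the $\WKL_{0}$-provable bound and modulus of uniform continuity with the $\RCAo$-provable identity $\sum_{k\leq n}(x-k/n)^{2}p_{n,k}(x)=x(1-x)/n$, exactly as in the paper. The only (harmless) divergence is in the reversal, where the paper goes through the equivalence of $\WKL_{0}$ with the Weierstrass approximation theorem for codes (\cite{simpson2}*{IV.2.5}) together with the fact that every code denotes a third-order continuous function, while you instead extract boundedness of arbitrary continuous functions and invoke that $\WKL_{0}$-equivalence; both are valid, and you note the paper's route as an alternative.
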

\begin{proof}
First of all, it is well-known that $\RCA_{0}$ can prove basic facts about basic objects like polynomials, as established in e.g.\ \cite{simpson2}*{II.6}.
Similarly, the following can be proved in $\RCA_{0}$, for $p_{n,k}(x):= \binom{n}{k} x^k (1-x)^{n-k}$ and any $x\in [0,1], m\leq n$:
\be\textstyle\label{identitity}
p_{m, n}(x)\geq 0 \textup{ and }  \sum_{k\leq n}\left(x-{k \over n}\right)^{2}p_{n,k}(x)={x(1-x) \over n} \textup{ and }{ \sum _{k\leq n} p_{n, k}(x)=1}
 \ee
using e.g.\ binomial expansion and similar basic properties of polynomials.  

\smallskip

Secondly, by \cite{simpson2}*{IV.2.5}, $\WKL_{0}$ is equivalent to the Weierstrass approximation theorem for \emph{codes} of continuous functions.  
Every code for a continuous function on $[0,1]$ denotes a third-order (continuous) function by \cite{dagsamXIV}*{Theorem 2.2}, which can be established directly
by applying $\QFAC^{1, 0}$ from $\RCAo$ to the formula expressing the totality of the code on $[0,1]$.  Hence, the upward implication follows. 

\smallskip

Thirdly, for the remaining implication, $\WKL_{0}$ implies that continuous functions are \emph{uniformly} continuous on $[0,1]$, both for codes (see \cite{simpson2}*{IV.2.3}) and for third-order functions (see \cite{dagsamXIV}*{Theorem 2.3}).    
Hence, fix continuous $f:[0,1]\di \R$ and $k_{0}\in \N$ and let $N_{0}\in \N$ be such that $(\forall x, y\in [0,1])( |x-y|<\frac{1}{2^{N_{0}}}\di |f(x)-f(y)|<\frac{1}{2^{k_{0}+1}})$.  
Clearly, $f$ is bounded on $[0,1]$, say by $M_{0}\in \N$.  Now fix $x_{0}\in(0,1)$, choose $n\geq 2^{2M_{0}+2N_{0}+k_{0}+1}$, and consider 
\begin{align}
|f(x_{0})- B_{n}(f, x_{0})|
&\textstyle=|f(x_{0})-\sum_{k=0}^{n} f\big(\frac{k}{n}\big) p_{n,k}(x_{0}) |\notag\\
&\textstyle=|\sum_{k=0}^{n}\big( f(x_{0})-f\big(\frac{k}{n}\big)\big) p_{n,k}(x_{0}) |\notag\\
&\textstyle\leq \sum_{k=0}^n \big| f\big(\frac{k}{n}\big) -f(x_{0}) \big| p_{n,k}(x_{0})\notag\\
&\textstyle=\sum_{i=0,1} \sum_{k\in A_{i}} \big| f\big(\frac{k}{n}\big) -f(x_{0}) \big| p_{n,k}(x_{0}),\label{wups2}
\end{align}
where $A_{0}:=\{k\leq n: |x_{0}-\frac{k}{n}|\leq \frac{1}{2^{N_{0}}} \}$ and $A_{1}:=\{0, 1, \dots n\}\setminus A_{0} $, and where the second equality follows by the final formula in \eqref{identitity}.
The sets $A_{i}$ ($i=0,1$) exist in $\RCA_{0}$ by \emph{bounded comprehension} (see \cite{simpson2}*{X.4.4}).
%Since $k\in A_{0}\di |x_{0}-\frac{k}{n}|<\frac{1}{2^{N_{0}}}$, we have 
Now consider
\[\textstyle
\sum_{k\in A_{0}} \big| f\big(\frac{k}{n}\big) -f(x_{0}) \big| p_{n,k}(x_{0})\leq \frac{1}{2^{k_{0}+1}}\sum_{k\in A_{0} }p_{n,k}(x_{0})\leq \frac{1}{2^{k_{0}+1}}\sum_{k\leq n}p_{n,k}(x_{0})= \frac{1}{2^{k_{0}+1}},
\]
where the final equality follows by \eqref{identitity}.  For the other sum in \eqref{wups2}, we have 
\[\textstyle
\sum_{k\in A_{1}} \big| f\big(\frac{k}{n}\big) -f(x_{0}) \big| p_{n,k}(x_{0})\leq  2M_{0} \sum_{k\in A_{1}} \frac{(x_{0}-\frac{k}{n})^{2}}{1/2^{2N_{0}}} p_{n,k}(x_{0})  \leq 2M_{0} \sum_{k\leq n} \frac{(x_{0}-\frac{k}{n})^{2}}{1/2^{2N_{0}}} p_{n,k}(x_{0})  
\]
where $k\in A_{1}$ implies $\frac{(x_{0}-\frac{k}{n})^{2}}{1/2^{2N_{0}}}\geq 1$ by definition and where $p_{n,k}(x_{0})\geq 0$ is also used.  
Now apply the second formula from \eqref{identitity} to the final formula in the previous centred equation to obtain 
\[\textstyle
\sum_{k\in A_{1}} \big| f\big(\frac{k}{n}\big) -f(x_{0}) \big| p_{n,k}(x_{0})\leq 2M_{0}2^{2N_{0}} \frac{x_{0}(1-x_{0})}{n} \leq \frac{1}{2^{k_{0}+1}}.
\]
Thus, we have obtained $|f(x_{0})- B_{n}(f, x_{0})|\leq \frac{1}{2^{k_{0}}}$ and uniform convergence. 
\end{proof}
Next, we wish to generalise the previous theorem to discontinuous functions, for which the following theorem from the literature (see e.g.\ \cite{overdeberg}*{Theorem 5.1} or \cite{klo}*{\S4, p.\ 68}) is essential, namely to Theorems \ref{LEMCOR} and \ref{flank}. 
\begin{thm}[$\ACAo$]\label{echtdecrux}
For any $f:[0,1]\di [0,1]$ and $x\in (0,1)$ such that $f(x+)$ and $f(x-)$ exist\footnote{Using $(\exists^{2})$, rational approximation yields $\Phi:[0,1]\di \R^{2}$ such that $\Phi(x)=(f(x+), f(x-))$ in case the limits exist at $x\in [0,1]$.  In this way, `$\lambda x.f(x+)$' makes sense for regulated functions.}, we have
\be\label{franz}\textstyle
\frac{f(x+)+f(x-)}{2}=\lim_{n\di \infty }B_{n}(f, x).
\ee
\end{thm}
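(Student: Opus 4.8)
The plan is to follow the classical argument: split the Bernstein sum $B_{n}(f,x_{0})$ at the index $k=nx_{0}$ into a ``left'' piece, a ``right'' piece, and at most one ``central'' term, and show that the left piece converges to $\tfrac12 f(x_{0}-)$, the right piece to $\tfrac12 f(x_{0}+)$, and the central term to $0$. Concretely, fix $x_{0}\in(0,1)$, write $a:=f(x_{0}-)$ and $b:=f(x_{0}+)$, and recall $|f(t)|\leq 1$ for all $t\in[0,1]$. Using $(\exists^{2})$ to decide, for each $k\leq n$, whether $k/n<x_{0}$, $k/n=x_{0}$, or $k/n>x_{0}$, bounded comprehension (\cite{simpson2}*{X.4.4}) produces the index sets $A^{n}_{0}:=\{k\leq n: k/n<x_{0}\}$ and $A^{n}_{1}:=\{k\leq n: k/n>x_{0}\}$, so that $B_{n}(f,x_{0})=L_{n}+M_{n}+R_{n}$, where $L_{n}:=\sum_{k\in A^{n}_{0}}f(k/n)p_{n,k}(x_{0})$, $R_{n}:=\sum_{k\in A^{n}_{1}}f(k/n)p_{n,k}(x_{0})$, and $M_{n}:=f(x_{0})p_{n,nx_{0}}(x_{0})$ is present exactly when $nx_{0}\in\N$. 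It then suffices to prove $L_{n}\to\tfrac a2$, $R_{n}\to\tfrac b2$, and $M_{n}\to 0$. Note that the argument is local: only the values of $f$ at the rationals $k/n$ and at $x_{0}$, together with the hypothesised one-sided limits at $x_{0}$, are used.

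The central term and the two tail estimates are routine. Since $k\mapsto p_{n,k}(x_{0})$ is unimodal with mode within distance $1$ of $nx_{0}$, a direct Stirling estimate on the mode term gives $\max_{k\leq n}p_{n,k}(x_{0})\leq C/\sqrt{n}$ for a constant $C$ depending only on $x_{0}$; hence $|M_{n}|\leq\max_{k}p_{n,k}(x_{0})\to 0$. For $L_{n}$, fix $\eps>0$; since $f(x_{0}-)$ exists there is $\delta>0$ with $|f(t)-a|<\eps$ for all $t\in(x_{0}-\delta,x_{0})$. Split $A^{n}_{0}$ into $\{k\in A^{n}_{0}: x_{0}-\delta<k/n\}$ and $\{k\in A^{n}_{0}: k/n\leq x_{0}-\delta\}$. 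On the latter, $|f(k/n)|\leq 1$ and, by the variance identity $\sum_{k\leq n}(x_{0}-\tfrac kn)^{2}p_{n,k}(x_{0})=\tfrac{x_{0}(1-x_{0})}{n}$ from \eqref{identitity}, the corresponding weight is at most $\delta^{-2}\sum_{k\leq n}(x_{0}-\tfrac kn)^{2}p_{n,k}(x_{0})=\tfrac{x_{0}(1-x_{0})}{n\delta^{2}}$, which is $<\eps$ for all large $n$. On the former, $|f(k/n)-a|<\eps$, so that piece equals $a\sum_{x_{0}-\delta<k/n<x_{0}}p_{n,k}(x_{0})$ up to an error of absolute value at most $\eps$. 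Thus both $L_{n}$ and (symmetrically) $R_{n}$ are reduced to the asymptotics of the one-sided partial weights $\sum_{k/n<x_{0}}p_{n,k}(x_{0})$ and $\sum_{k/n>x_{0}}p_{n,k}(x_{0})$.

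The crux is the claim
\be\textstyle\label{halfmass}
\sum_{k\leq n,\, k/n<x_{0}}p_{n,k}(x_{0})\longrightarrow \tfrac12\qquad\text{and}\qquad\sum_{k\leq n,\, k/n>x_{0}}p_{n,k}(x_{0})\longrightarrow \tfrac12.
\ee
Given the first limit, the second follows from $\sum_{k\leq n}p_{n,k}(x_{0})=1$ and $M_{n}\to 0$; and combined with the tail bound above, also $\sum_{x_{0}-\delta<k/n<x_{0}}p_{n,k}(x_{0})\to\tfrac12$ and likewise on the right. Statement \eqref{halfmass} says exactly that the binomial distribution $\mathrm{Bin}(n,x_{0})$, which has mean $nx_{0}$, puts asymptotically half of its mass on each side of that mean, i.e.\ it is a form of the De Moivre--Laplace theorem. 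I would prove it in one of two ways: either from the De Moivre--Laplace \emph{local} estimate $p_{n,k}(x_{0})=\tfrac{1}{\sqrt{2\pi n x_{0}(1-x_{0})}}\,e^{-(k-nx_{0})^{2}/(2nx_{0}(1-x_{0}))}(1+o(1))$, valid uniformly for $|k-nx_{0}|\leq n^{2/3}$, so that the left-hand side of \eqref{halfmass} becomes a Riemann sum converging to $\int_{-\infty}^{0}\tfrac{1}{\sqrt{2\pi}}e^{-u^{2}/2}\,du=\tfrac12$ (equivalently, invoke the Berry--Esseen bound, which even yields the effective error $O(1/\sqrt n)$); or, more elementarily, from the fact that a median of $\mathrm{Bin}(n,x_{0})$ lies within distance $1$ of $nx_{0}$, so that both one-sided masses at the median are $\geq\tfrac12$, whence each mass in \eqref{halfmass}, which differs from these by at most two terms of size $O(1/\sqrt n)$, equals $\tfrac12+o(1)$. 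I expect \eqref{halfmass} to be the main obstacle: it is the single genuinely analytic ingredient, and the continuous-function theorem (Theorem~\ref{flahy}) is of no help here, since the natural continuous minorant and majorant of a unit step take the values $0$ and $1$ at the jump and so cannot control $B_{n}$ there. Its formalisation in $\ACAo$ is nonetheless routine, as every quantity involved is an explicit finite sum of rationals with effective error bounds.

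Granting \eqref{halfmass}, the assembly is immediate: for each $\eps>0$ the previous two paragraphs give $\limsup_{n}|L_{n}-\tfrac a2|\leq 2\eps$ and $\limsup_{n}|R_{n}-\tfrac b2|\leq 2\eps$, hence $L_{n}\to\tfrac a2$ and $R_{n}\to\tfrac b2$; together with $M_{n}\to 0$ this yields $B_{n}(f,x_{0})=L_{n}+M_{n}+R_{n}\to\tfrac{f(x_{0}-)+f(x_{0}+)}{2}$, as required. In contrast with Theorem~\ref{flahy}, the convergence here is only pointwise, and no uniformity can be expected in a neighbourhood of a jump.
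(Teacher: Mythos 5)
Your proposal is correct and follows essentially the same route as the paper: both arguments isolate the far terms via the variance identity $\sum_{k\leq n}(x_{0}-\tfrac kn)^{2}p_{n,k}(x_{0})=\tfrac{x_{0}(1-x_{0})}{n}$, control the near terms by the one-sided limits, and reduce everything to the single analytic crux that the binomial weights place asymptotically half their mass on each side of $nx_{0}$, which the paper likewise establishes via Stirling and the de~Moivre--Laplace theorem. Your explicit treatment of the diagonal term $k/n=x_{0}$ is a small improvement in bookkeeping over the paper's index sets, and your alternative derivation of the half-mass claim from the fact that a median of $\mathrm{Bin}(n,x_{0})$ lies within distance $1$ of the mean is a genuinely more elementary route to the crux (it only needs a Stirling bound on a single term rather than the full local limit estimate), so it would be worth writing out if a self-contained proof in $\ACAo$ is desired.
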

\begin{proof}
First of all, the proof of the theorem is similar to that of Theorem \ref{flahy}, but more complicated as $f(x_{0})$ in \eqref{wups2} is replaced by $\frac{f(x_{0}+)+f(x_{0}-)}{2}$ in \eqref{kyt}.  
In particular, \eqref{kyt} involves more sums than \eqref{flahy}, but the only `new' part is to show that \eqref{dogch} becomes arbitrarily small.  An elementary proof of this fact is tedious but straightforward.  For this reason, we have provided
a sketch with ample references.     

\smallskip

Secondly, fix $k_{0}\in \N$ and $x_{0}\in \R$ such that the left and right limits $f(x_{0}-)$ and $f(x_{0}+)$ exist.  By definition, there is $N_{0}\in \N$ such that for any $y, z\in [0,1]$:
\[\textstyle
x_{0}-\frac{1}{2^{N_{0}}}<z  <x_{0}<y <x_{0}+\frac{1}{2^{N_{0}}}\di \big[ |f(x_{0}+)-f(y)|< \frac{1}{2^{k_{0}+1}} \wedge |f(x_{0}-)-f(z)|< \frac{1}{2^{k_{0}+1}}\big].
\]
Note that increasing $N_{0}$ does not change the previous property.
Now use $(\exists^{2})$ to define $A_{0}:=\{ k\leq n  : x_{0}\leq k/n <x_{0}+\frac{1}{2^{N_{0}}}\}$, $A_{1}:=\{ k\leq n  : x_{0}-\frac{1}{2^{N_{0}}}\leq k/n <x_{0}\}$, and $A_{2}:= \{k\leq n:|x_{0}-k/n|\geq \frac{1}{2^{N_{0}}}  \}  $ and consider:  
\begin{align}\textstyle
B_{n}(f, x_{0})-\frac{f(x_{0}+)+f(x_{0}-)}{2}
&\textstyle=\sum_{k=0}^n \big(f\big(\frac{k}{n}\big) -\frac{f(x_{0}+)+f(x_{0}-)}{2} \big )p_{n,k}(x_{0})\notag\\
&\textstyle=\sum_{i=0}^{2}\sum_{k\in A_{i}} \big(f\big(\frac{k}{n}\big) -\frac{f(x_{0}+)+f(x_{0}-)}{2} \big)p_{n,k}(x_{0}),\label{kyt}
\end{align}
where the first equality follows by the final formula in \eqref{identitity} for $x=x_{0}$.  

\smallskip

Thirdly, we note that $k\in A_{2}$ implies $\frac{(x_{0}-\frac{k}{n})^{2}}{1/2^{2N_{0}}}\geq 1$ by definition, yielding
\be\label{dornik}\textstyle
\sum_{k\in A_{2}}p_{n,k}(x_{0})\leq \sum_{k\in A_{2}} \frac{(x_{0}-\frac{k}{n})^{2}}{1/2^{2N_{0}}} p_{n,k}(x_{0})\leq  \sum_{k\leq n} \frac{(x_{0}-\frac{k}{n})^{2}}{1/2^{2N_{0}}} p_{n,k}(x_{0}),
\ee
where the second inequality holds since $p_{n, k}(x_{0})\geq 0$ by \eqref{identitity}.  
Now apply the second formula in \eqref{identitity} to \eqref{dornik} to obtain $\sum_{k\in A_{2}}p_{n,k}(x_{0})\leq 2^{2N_{0}} \frac{x_{0}(1-x_{0})}{n} $. 
Since $f$ is assumed to be bounded on $[0,1]$, we have for $n\geq 2^{2N_{0}+k_{0}+1}$:
\be\label{draai}\textstyle
|\sum_{k\in A_{2}} \big(f\big({k}/{n}\big) -\frac{f(x_{0}+)+f(x_{0}-)}{2} \big)p_{n,k}(x)|<\frac{1}{2^{k_{0}+1}},
\ee 
Fourth, we consider another sum from \eqref{kyt}, namely the following:
\begin{align}\textstyle
&\textstyle\sum_{k\in A_{0}} \big(f\big(\frac{k}{n}\big) -\frac{f(x_{0}+)+f(x_{0}-)}{2} \big)p_{n,k}(x_{0})\notag\\
&=\textstyle\sum_{k \in A_0} (f(\frac{k}{n})-f(x_0+))p_{n,k}(x_0)+\frac{f(x_0+)-f(x_0-)}{2}(\sum_{k \in A_0}p_{n,k}(x_0)\big).\label{frok}%-\sum_{k \in A_1}p_{n,k}(x_0))
%&\leq \textstyle \sum_{i=0}^{1}\sum_{k\in A_{i}} \big|f\big(\frac{k}{n}\big) -\frac{f(x_{0}+)+f(x_{0}-)}{2} \big|p_{n,k}(x_{0})\\
%&\leq \textstyle\sum_{i=0}^{1}\sum_{k\in A_{i}} \frac{1}{2^{k_{0}+1}}p_{n,k}(x_{0})+
\end{align}
By the choice of $N_{0}$, the first sum in \eqref{frok} satisfies:
\begin{align}
\textstyle |\sum_{k \in A_0} (f(\frac{k}{n})-f(x_0+))p_{n,k}(x_0)|&\leq \textstyle \sum_{k \in A_0} |f(\frac{k}{n})-f(x_0+)|p_{n,k}(x_0)\notag\\
&\textstyle\leq   \frac{1}{2^{k_{0}+1}}\sum_{k\in A_{0}}p_{k, n}(x_{0})\leq \frac{1}{2^{k_{0}+1}},\label{frok2}
\end{align}
where the final step in \eqref{frok2} follows from the final formula in \eqref{identitity}.  The same, namely a version of \eqref{frok} and \eqref{frok2}, holds \emph{mutatis mutandis} for $A_{1}$.   
Thus, consider \eqref{dogch} which consists of the second sum of \eqref{frok} and the analogous sum for $A_{1}$:
\be\label{dogch}\textstyle
\frac{f(x_0+)-f(x_0-)}{2}(\sum_{k \in A_0}p_{n,k}(x_0) - \sum_{k\in A_{1}}p_{n,k}(x_0) ).  
\ee
Now, $p_{n,k}(x)$ is well-known as the \emph{binomial distribution} and the former's properties are usually established in probability theory via conceptual results like the \emph{central limit theorem}.  
In particular, for large enough $N_{0}$ and associated $n$, the sums $\sum_{k \in A_{i}}p_{n,k}(x_0)$ for $i=0,1$ from \eqref{dogch} can be shown to be arbitrarily close to $\frac12$. 
In light of \eqref{dogch}, a proof (in $\ACAo$) of this limiting behaviour of $\sum_{k \in A_{i}}p_{n,k}(x_0)$ (for $i=0,1$) establishes \eqref{franz} and the theorem.

\smallskip

Finally, an \emph{elementary} proof of the limit behaviour of $\sum_{k \in A_{i}}p_{n,k}(x_0)$ ($i=0, 1$) proceeds along the following lines, based on the \emph{de~Moivre-Laplace theorem} which is apparently a predecessor to the central limit theorem.  
\begin{itemize}
\item First of all, \emph{Stirling's formula} provides approximations to the factorial $n!$.  There are \emph{many} elementary proofs of this formula (see e.g.\ \cite{feller2, impens0, weinbor, ching}).
\item Secondly,  the {de~Moivre-Laplace theorem} states approximations to the sum $\sum_{k=k_{1}}^{k_{2}}p_{n,k}(x)$
in terms of the Gaussian integral $\int e^{\frac{-t^{2}}{2}}dt $ (\cite{papoe}).  There are elementary proofs of this approximation using (only) {Stirling's formula}, namely \cite{feller}*{VII.3, p.\ 182} or \cite{ching}*{Theorem 6, p.\ 228}.
\item Thirdly, applying the previous for $\sum_{k \in A_{i}}p_{n,k}(x_0)$ ($i=0, 1$), one observes that the only non-explicit term in the latter is $\int_{0}^{+\infty} e^{\frac{-t^{2}}{2}}dt =\frac{\sqrt{2\pi}}{2}$; the former sum is therefore arbitrarily close to $\frac{1}{2}$ for $N_{0}$ and $n$ large enough. 
\end{itemize}
It is a tedious but straightforward verification that the above proofs can be formalised in $\ACAo$ (and likely $\RCA_{0}$).  
Alternatively, one establishes basic properties of the $\Gamma$-function (see \cite{weinbor}) and derives the de Moivre-Laplace theorem (\cite{papoe}).
\end{proof}
The final part of the previous proof involving $\sum_{k \in A_{i}}p_{n,k}(x_0)$, amounts to the special case of the theorem for (a version of) of the Heaviside function as follows:
\[
H(x):=
\begin{cases}
1 & x\geq 0 \\
0 & \textup{otherwise}
\end{cases}. 
\]   
Building on the previous theorem, we can now develop the RM-study of approximation theorem for discontinuous functions.   
\begin{thm}[$\RCAo$]\label{LEMCOR} The following are equivalent, where $I\equiv [0,1]$.
\begin{enumerate}
\renewcommand{\theenumi}{\alph{enumi}}
\item $\WKL_{0}$.\label{itema}
\item For any function $f:[0,1]\di \R$, $f$ is continuous on $(0,1)$ \emph{if and only if} the equation \eqref{nag} holds \textbf{uniformly} on $(0,1)$.\label{itemb}
\item For any cadlag $f:I\di \R$ and $x\in (0,1)$, we have \label{itemc}
\be\label{franz2}\textstyle
\frac{f(x+)+f(x-)}{2}=\lim_{n\di \infty }B_{n}(f, x),
\ee
with uniform convergence if $C_{f}=I$.
\item For any $U_{0}$-function $f:I\di \R$ and any $x\in (0,1)$, \eqref{franz2} holds, with uniform convergence if $C_{f}=I$.\label{itemd}
%\item For any Baire 1 $f:I\di \R$ and any $x\in (0,1)$ such that $f(x+)$ and $f(x-)$ exist, \eqref{franz} holds.
\end{enumerate}
If we additionally assume $\QFAC^{0,1}$, the following are equivalent to $\WKL_{0}$.  
\begin{enumerate}
\renewcommand{\theenumi}{\alph{enumi}}
\setcounter{enumi}{4}
\item For any regulated $f:I\di \R$ and $x\in (0,1)$, \eqref{franz2} holds, with uniform convergence if $C_{f}=I$.\label{iteme}
%\item For a \(super\) Riemann integrable $f:I\di \R$ and $x\in (0,1)$, \eqref{franz2} holds. 
\item For any locally bounded $f:I\di \R$ and any $x\in (0,1)$ such that $f(x+)$ and $f(x-)$ exist, \eqref{franz2} holds, with uniform convergence if $C_{f}=I$.\label{itemf}
\item The previous item restricted to \textbf{any} function class containing $C([0,1])$. \label{itemg}
\end{enumerate}
\end{thm}
\begin{proof}
First of all, to derive item \eqref{itemb} from $\WKL_{0}$, let $f:[0,1]\di \R$ be such that \eqref{franz2} holds uniformly.  
Now fix $k_{0}\in \N$ and let $N_{0}\in \N$ be such that for $n\geq N_{0}$ and $x\in (0,1)$, $|B_{n}(f, x)-f(x)|<\frac{1}{2^{k_{0}}}$.  
Then $B_{N_{0}}(f, x)$ is uniformly continuous, say with modulus $h$ (\cite{simpson2}*{IV.2.9}).  Fix $x, y\in (0,1)$ with $|x-y|<\frac{1}{2^{h(k_{0})}}$ and consider
\[\textstyle
|f(x)-f(y)|\leq |f(x)-B_{N_{0}}(f, x)|+ |B_{N_{0}}(f, x)-B_{N_{0}}(f, y)|+|B_{N_{0}}(f, y)-f(y)|\leq \frac{3}{2^{k_{0}}}.
\]
To show that item \eqref{itema} follows from items \eqref{itemb}-\eqref{itemg}, note that each of the latter implies the second item from Theorem \ref{flahy}, i.e.\ $\WKL_{0}$ follows. 
%Theorem \ref{flahy} goes through if the second item is restricted to $f:[0,1]\di [0,1]$.  

\smallskip

Secondly, to show that item \eqref{itema} implies items \eqref{itemb}-\eqref{itemg}, we invoke the law of excluded middle as in $(\exists^{2})\vee \neg(\exists^{2})$.  
In case $\neg(\exists^{2})$, all functions on the reals are continuous by \cite{kohlenbach2}*{Prop.~3.12}.  
In this case, $\WKL_{0}$ implies the other items from the theorem by Theorem \ref{flahy}. 
In case $(\exists^{2})$ holds, items \eqref{itemb}-\eqref{itemg} follow by Theorem~\ref{echtdecrux}, assuming we can provide an upper bound to the functions at hand.
%Now, cadlag functions are bounded on the unit interval in $\ACAo$ by \cite{dagsamXIV}*{Theorem 2.9} and a similar proof works for $U_{0}$-functions. 
%Moreover, regulated functions are locally bounded (in $\RCAo$) and the latter are bounded on $[0,1]$ in $\ACAo+\QFAC^{0,1}$ by \cite{dagsamV}*{\S4.2}.
To this end, if $f:[0,1]\di \R$ is unbounded, apply $\QFAC^{0,1}$ to the formula
\be\label{frobg}
(\forall n\in \N)(\exists x\in [0,1])(|f(x)|>n), 
\ee
yielding a sequence $(x_{n})_{n\in \N}$ such that $|f(x_{n})|>n$ for all $n\in \N$; the latter sequence has a convergent sub-sequence, say with limit $y\in [0,1]$, by sequential compactness (\cite{simpson2}*{III.2}). Clearly, $f$ is not locally bounded at $y$ and hence also not regulated.
We note that for unbounded cadlag or $U_{0}$-functions, \eqref{frobg} reduces to $(\forall n\in \N)(\exists q\in [0,1]\cap \Q)(|f(q)|>n)$, i.e.\ we can use $\QFAC^{0,0}$ (included in $\RCAo$).  % instead of 
\end{proof}
The `excluded middle trick' from the previous proof should be used sparingly as some of our mathematical notions do not make much\footnote{Following Definition \ref{char}, the unit interval is not a set in $\RCAo$; a more refined framework for the study of open sets may be found in \cite{dagsamXVI}.} sense in $\RCAo$.  
Using suitable modulus functions (for the definition of $f(x+)$ and $f(x-)$), one could express uniform convergence of \eqref{franz2} for regulated functions; this
does not seem to yield very elegant results, however. 

\smallskip

Next, we briefly treat equivalences involving $\ACAo$ (and $\ATR_{0}$) and approximation theorems via Bernstein polynomials.
Note that the functions $g, h$ in the second item of Theorem \ref{flank} can be discontinuous and recall the set from \eqref{BF}.  
For the final item, there are \emph{many} function classes between $BV$ and regulated, like the functions of bounded Waterman variation (see \cite{voordedorst}).
\begin{thm}[$\RCAo$]\label{flank} The following are equivalent to $\ACA_{0}$
\begin{enumerate}
\renewcommand{\theenumi}{\alph{enumi}}
\item \(Jordan\) For continuous $f:[0,1]\di \R$ in $BV$, there are continuous and non-decreasing $g, h:[0,1]\di \R$ such that $f=g-h$.\label{va}
\item For continuous $f:[0,1]\di \R$ in $BV$, there are non-decreasing $g, h:[0,1]\di \R$ such that $f=g-h$ and $B_{g}=B_{h}=[0,1]$. \label{vb}
\item For continuous $f:[0,1]\di \R$ in $BV$, there are non-decreasing $g, h:[0,1]\di \R$ such that $f=g-h$ and $B_{g}$ and $B_{h}$ are dense in $[0,1]$. \label{vc}
\item For continuous $f:[0,1]\di \R$ in $BV$, there are non-decreasing $g, h:[0,1]\di \R$ such that $f=g-h$ and $B_{g}$ and $B_{h}$ have measure $1$. \label{vd}
\item For continuous $f:[0,1]\di \R$ in $BV$, there are non-decreasing $g, h:[0,1]\di \R$ such that $f=g-h$ and $B_{g}$ and $B_{h}$ are non-enumerable.\label{ve}
%\item For continuous $f:[0,1]\di \R$ in $BV$, there are cadlag $g, h:[0,1]\di \R$ such that $f=g-h$.
%\item The first item with the second occurrence of `continuous' replaced by Riesz RBV$_{p}$ sub-classes of $BV$ (???).  
\end{enumerate}
\end{thm}
\begin{proof}
The equivalence between $\ACA_{0}$ and item \eqref{va} is proved in \cite{nieyo} \emph{for RM-codes}.  
Now, RM-codes for continuous functions denote third-order functions by \cite{dagsamXIV}*{Theorem 2.2}, working in $\RCAo$.
Moreover, a continuous and non-decreasing function on $[0,1]$ is determined by the function values on $[0,1]\cap \Q$, i.e.\ one readily obtains an RM-code for such functions in $\RCAo$. 
In this way, item \eqref{va} is equivalent to $\ACA_{0}$ as well.  Item \eqref{vb} follows from item \eqref{va} by Theorem~\ref{echtdecrux}. 
To show that item \eqref{vb} implies item \eqref{itema}, invoke the law of excluded middle as in $(\exists^{2})\vee \neg(\exists^{2})$.  
In case $(\exists^{2})$ holds, $\ACA_{0}$ and item \eqref{va} is immediate.  In case $\neg(\exists^{2})$ holds, all functions on $\R$ are continuous (\cite{kohlenbach2}*{Prop.~3.12}) and item \eqref{va} trivially follows from item \eqref{vb}.  The other items are treated in (exactly) the same way.  % except the last one, for which \cite{dagsamXIV}*{Theorem 2.16} provides .
\end{proof}
We could also generalise Theorem \ref{flank} using \emph{pseudo-monotonicity} (\cite{voordedorst}*{Def.~1.14}), originally introduced by Josephy (\cite{josi}) as the largest class such that composition with $BV$ maps to $BV$.
The RM of $\ATR_{0}$ as in \cite{dagsamXIV}*{Theorem 2.25} includes the Jordan decomposition theorem restricted to $BV$-functions with an \emph{arithmetical} graph.
One readily shows that $\ATR_{0}$ is equivalent to the third-to-fifth items in Theorem~\ref{flank} with `continuous' removed and restricted to `arithmetical $f:[0,1]\di \R$'.

\smallskip

Next, we establish an equivalence for $(\exists^{2})$ involving Bernstein polynomials.  
We note that `splittings' as in Theorem \ref{kiolp} are rare in second-order RM, but not in higher-order RM, as studied in detail in \cite{samsplit}.
One can prove that $\WKL_{0}$ in the theorem cannot be replaced by \emph{weak weak K\"onig's lemma} (see e.g.\ \cite{simpson2}*{X.1}).  
%The following is actually an addition to \cite{kohlenbach2}*{Prop.\ 3.12}, but interesting in its own right. 
\begin{thm}[$\RCAo$]\label{kiolp} 
The following are equivalent to $(\exists^{2})$:
\begin{enumerate}
\renewcommand{\theenumi}{\alph{enumi}}
\item There is $f:[0,1]\di [0,1]$ and $x\in (0,1)$ such that $f(x)\ne \lim_{n\di \infty}B_{n}(f, x)$.\label{ea}
\item $\WKL_{0}$ \(or $\ACA_{0}$\) plus: there is $f:[0,1]\di \R$ and $x\in (0,1)$ such that $f(x)\ne \lim_{n\di \infty}B_{n}(f, x)$. \label{eb}
\item $\WKL_{0}$ \(or $\ACA_{0}$\) plus: there is $f:[0,1]\di \R$ that is not bounded \(or: not Riemann integrable, or: not uniformly continuous\).\label{ec}
\end{enumerate}
We cannot remove $\WKL_{0}$ from the second or third item. 
\end{thm}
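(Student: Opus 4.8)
The plan is to close the loop between $(\exists^{2})$ and the three items; throughout, Grilliot's trick and the \emph{pointwise} half of the estimate in the proof of Theorem~\ref{flahy} do all the work, and $\WKL_{0}$ only enters when boundedness or uniform continuity must be \emph{derived}.

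\emph{From $(\exists^{2})$ to the three items.} By \cite{kohlenbach2}*{Prop.\ 3.12}, $(\exists^{2})$ yields the Heaviside-type function; in particular it yields $f\colon[0,1]\di\{0,1\}$ with $f(x)=1$ for $x>_{\R}\tfrac12$ and $f(x)=0$ otherwise. For this $f$ one has $B_{n}(f,\tfrac12)=2^{-n}\sum_{k>n/2}\binom{n}{k}$, which equals $\tfrac12$ for odd $n$ and tends to $\tfrac12$ for even $n$ since $2^{-2m}\binom{2m}{m}\to 0$ (an elementary induction, e.g. via $\binom{2m}{m}\le 4^{m}/\sqrt{3m+1}$, available already in $\RCAo$); this is the Heaviside special case isolated at the end of the proof of Theorem~\ref{echtdecrux}. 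Hence $f(\tfrac12)=0\ne\tfrac12=\lim_{n}B_{n}(f,\tfrac12)$, which is the first item. Since $(\exists^{2})$ implies $\ACAo$, hence $\ACA_{0}$ and $\WKL_{0}$, and since $f\colon[0,1]\di[0,1]$ is a special case of $f\colon[0,1]\di\R$, the second item follows. For the third, $(\exists^{2})$ decides `$x=_{\R}0$', so $g(x):=1/x$ for $x\ne_{\R}0$ and $g(0):=0$ is a well-defined extensional function, unbounded on $[0,1]$ because $g(2^{-k})=2^{k}$; an unbounded function on $[0,1]$ is neither Riemann integrable nor uniformly continuous, and $\WKL_{0}$, $\ACA_{0}$ again come from $(\exists^{2})$.

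\emph{From the items back to $(\exists^{2})$.} The key remark is that, applied at a single $x_{0}\in(0,1)$, the estimate in the proof of Theorem~\ref{flahy} uses only a global bound $M_{0}$ for the function and a modulus of continuity at $x_{0}$; consequently, \emph{in $\RCAo$}, any bounded $h\colon[0,1]\di\R$ that is continuous at $x_{0}$ satisfies $B_{n}(h,x_{0})\to h(x_{0})$. For the first item: were the given $f\colon[0,1]\di[0,1]$ (bounded by $1$) continuous at $x_{0}$, this would force $f(x_{0})=\lim_{n}B_{n}(f,x_{0})$, against the hypothesis; so $f$ is discontinuous at the interior point $x_{0}$, and Grilliot's trick (\cite{kohlenbach2}*{Prop.\ 3.12}) yields $(\exists^{2})$. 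For the second and third items, argue by cases on whether $f$ is continuous at every point of $[0,1]$ or discontinuous somewhere. If $f$ is discontinuous somewhere, Grilliot's trick gives $(\exists^{2})$ at once. If $f$ is continuous everywhere, then $\WKL_{0}$ (or $\ACA_{0}$) makes $f$ uniformly continuous, hence bounded, on $[0,1]$ (for third-order functions by \cite{dagsamXIV}*{Theorem 2.3}); the pointwise-convergence remark then contradicts the hypothesis of the second item, while boundedness plus uniform continuity contradicts the hypothesis of the third. So the `continuous everywhere' case is impossible, and $(\exists^{2})$ holds in all cases.

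\emph{Non-removability of $\WKL_{0}$ and the main obstacle.} Since $(\exists^{2})$ implies the existence statements in items two and three, it is the converse that must fail without $\WKL_{0}$; so it suffices to produce a model of $\RCAo+\neg(\exists^{2})$ that satisfies the relevant existence statement. Under $\neg(\exists^{2})$ every function on $\R$ is continuous, so one works in a model of $\RCAo+\neg(\exists^{2})$ in which the fan theorem fails --- equivalently, in which Heine--Borel for $[0,1]$ fails and $\WKL_{0}$ is unavailable (cf.\ \cite{kohlenbach2} and, e.g., \cite{dagsamV}). Such a model contains a pointwise-continuous \emph{unbounded} $f\colon[0,1]\di\R$ of recursive-counterexample type, whose `bumps' escape to a point absent from the model; this settles the third item. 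For the second item one additionally arranges the bumps so that $B_{n}(f,x_{0})$ fails to converge to $f(x_{0})$: placing non-negative bumps of sufficiently large height $H_{j}$ (of the form $2^{cn_{j}}$ for a suitable constant $c>0$) centred at rationals $k_{j}/n_{j}$ with $k_{j}/n_{j}\to y^{*}$ near $1/4$ forces $B_{n_{j}}(f,\tfrac12)\geq 1$ for all $j$, since the binomial weight $2^{-n_{j}}\binom{n_{j}}{k_{j}}$ at $k_{j}/n_{j}\approx\tfrac14$ is only exponentially (not super-exponentially) small, whereas $f=0$ near $\tfrac12$ and $f$ remains pointwise continuous at every real of the model; hence $f(\tfrac12)\ne\lim_{n}B_{n}(f,\tfrac12)$. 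The step I expect to be the main obstacle is exactly the verification that this $f$ is a bona fide object of the model --- that the series defining it converges, with a modulus, at each real present in the model, and that the bump contributions to $B_{n_{j}}(f,\tfrac12)$ do not cancel --- after which, together with the $\RCAo$ pointwise-convergence remark above, the remainder is routine.
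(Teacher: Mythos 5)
Your treatment of the three equivalences is correct and follows the same route as the paper: the pointwise half of the estimate from the proof of Theorem \ref{flahy} (needing only a global bound $M_{0}$ and a modulus of continuity at the single point $x_{0}$) is exactly the lever the paper uses, and the reversal via ``discontinuous somewhere $\di$ $(\exists^{2})$'' by \cite{kohlenbach2}*{Prop.\ 3.12}, with $\WKL_{0}$ supplying boundedness in the continuous case, matches the paper's excluded-middle argument. One genuine variation: for $(\exists^{2})\di$ first item, the paper applies Theorem \ref{echtdecrux} to the Heaviside function, whereas you compute $B_{n}(f,\tfrac12)$ directly (exactly $\tfrac12$ for odd $n$ by symmetry, and $\tfrac12-\tfrac12\cdot 4^{-m}\binom{2m}{m}\di\tfrac12$ for $n=2m$). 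This buys something real: it avoids the de~Moivre--Laplace machinery that underlies Theorem \ref{echtdecrux} and makes the special case self-contained in $\ACAo$. Your choice of $1/x$ for the third item is also fine and handles all three variants at once.

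The gap is in the final sentence, the non-removability of $\WKL_{0}$. Your strategy --- build a model of $\RCAo+\neg(\exists^{2})$ containing a pointwise-continuous $f$ with spikes of height $2^{cn_{j}}$ at $k_{j}/n_{j}\di y^{*}$ so that $B_{n_{j}}(f,\tfrac12)\geq 1$ --- is not carried out, and you flag the crucial verification (that $f$ is total, extensional and continuous at every real \emph{of the model}, i.e.\ that the construction really produces an object of $\RCAo+\neg(\exists^{2})$) as an open obstacle. That verification is the entire content of the claim, so as written the last part of the theorem is unproved. It is also much harder than necessary: the paper's argument is purely proof-theoretic. Since $(\exists^{2})\di\WKL_{0}$, we have $\neg\WKL_{0}\di\neg(\exists^{2})$; and $\RCAo+\neg\WKL_{0}$ already \emph{proves} the existence statements of items two and three --- for item three via the known reversal that ``every continuous $f:[0,1]\di\R$ is bounded (resp.\ uniformly continuous)'' implies $\WKL_{0}$ (\cite{dagsamXIV}*{Theorems 2.3 and 2.8}, the third-order form of \cite{simpson2}*{IV.2.3}), and for item two via Theorem \ref{flahy}. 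Hence the bare existence statements are consistent with $\neg(\exists^{2})$, which is exactly what ``$\WKL_{0}$ cannot be removed'' means. The recursive counterexample you are trying to build by hand is already packaged inside those cited reversals; no explicit model construction or spike-height bookkeeping is needed.
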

\begin{proof}
That $(\exists^{2})$ implies items \eqref{ea} and \eqref{eb} follows by applying Theorem~\ref{echtdecrux} to the (suitably modified) Heaviside function. 
Item \eqref{ec} follows from $(\exists^{2})$ by considering e.g.\ Dirichlet's function $\mathbb{1_{Q}}$ for the Riemann integrable case.
Now let $f:[0,1]\di [0,1]$ be as in item \eqref{ea} and use Theorem \ref{echtdecrux} to conclude that $f$ is discontinuous; then $(\exists^{2})$ follows by \cite{kohlenbach2}*{Prop.\ 3.12}.

\smallskip

Next, assume item \eqref{eb} and suppose $f:[0,1]\di \R$ as in the latter is continuous on $[0,1]$.   Then $\WKL_{0}$ implies that $f$ is bounded (\cite{dagsamXIV}*{Theorem 2.8}); now use Theorem~\ref{echtdecrux} to obtain a contradiction, i.e.\ $f$ must be discontinuous and $(\exists^{2})$ follows as before.  That item \eqref{ec} implies $(\exists^{2})$ follows in the same way.  
%The other implications follow by combining: Theorem \ref{echtdecrux} and the fact that $\neg(\exists^{2})$ implies that all functions on the reals are continuous (\cite{kohlenbach2}*{Prop.\ 3.12}).
Finally, observe that by Theorem~\ref{flahy}, $\RCAo+\neg\WKL_{0}$ proves: \emph{there is $f:[0,1]\di \R$ and $x\in (0,1)$ such that $f(x)\ne \lim_{n\di \infty}B_{n}(f, x)$}.  
Since $\neg\WKL_{0}\di \neg(\exists^{2})$, the final sentence follows and we are done. 
\end{proof}
Finally, the previous results are not unique: \cite{boja}*{Theorem 2} expresses that for $f:[-1, 1]\di \R$ of bounded variation and $x_{0}\in (-1, 1)$, the lim sup and lim inf of 
the Hermite-Fej\'er polynomial $H_{n}(f, x)$ is some (explicit) term involving $f(x_{0}+)$ and $f(x_{0}-)$; this term reduces to $f(x)$ in case $f$ is continuous at $x$.  The proof of this convergence result is moreover lengthy but straightforward, i.e.\ readily formalised in $\ACAo$.  Perhaps surprisingly, the general case essentially reduces to the particular case of the Heaviside function, like in Theorem \ref{echtdecrux}.  
A more complicated, but conceptually similar, approximation result may be found in \cite{kommaar}.  Moreover, the \emph{Bohman-Korovkin theorem} (\cite{lorre2}) suggests near-endless variations of Theorem \ref{LEMCOR}.

\section{Equivalences involving new Big systems}\label{laslo}
\subsection{Introduction}\label{mintro}
In the below sections, we establish equivalences between the new Big systems from \cites{dagsamXI, samBIG, samBIG2} and properties of Bernstein polynomials for (dis)continuous functions, as sketched in Section \ref{sintro}. 
%In the below sections, we establish the equivalences sketched in Section \ref{sintro} as follows, working in higher-order RM.
\begin{itemize}
%\item \emph{Weak K\"onig lemma} is equivalent to approximation theorems from the literature for discontinuous functions via Bernstein polynomials  (Section \ref{worg}).
%Similar results for the Big Five systems $\ACA_{0}$ and $\ATR_{0}$ are sketched.
\item The \emph{uncountability of $\R$} is equivalent to the statement that for regulated functions, the Bernstein polynomials converge to the function value for at least one real (Section \ref{bigger}).
\item The \emph{enumeration principle} $\enum$ for countable sets is equivalent to the statement that for regulated functions, the Bernstein polynomials converge to the function value for all reals outside of a given sequence (Section \ref{senum}).
\item The \emph{pigeon-hole principle for measure} is equivalent to the statement that for Riemann integrable functions, the Bernstein polynomials converge to the function value \emph{almost everywhere} (Section \ref{piho}).
\item The \emph{Baire category theorem} is equivalent to the statement that for semi-continuous functions, the Bernstein polynomials converge to the function value for at least one real (Section \ref{BKT}).
\end{itemize}
The second item provides interesting insights into the coding practice of RM:  while the Banach space $C([0,1])$ can be given a code in $\RCAo+\WKL$, the relatively powerful principle $\enum$ is required to code the Banach space of regulated functions by item \eqref{LBQ} of Theorem \ref{tach}.

\smallskip

Now, by the results in \cite{dagsamX, dagsamXI}, the relatively strong system $\Z_{2}^{\omega}+\QFAC^{0,1}$ cannot prove the uncountability of $\R$ formulated as follows
\begin{center}
\textsf{NIN}$_{[0,1]}$: there is no injection from $[0,1]$ to $\N$,
\end{center}
where $\Z_{2}^{\omega}$ proves the same second-order sentences as \emph{second-order arithmetic} $\Z_{2}$ (see \cite{hunterphd} and Section \ref{lll}); the system $\Z_{2}^{\Omega}$ does prove $\NIN_{[0,1]}$, as many of the usual proofs of the uncountability of $\R$ show.  
The above-itemised principles, i.e.\ the Baire category theorem, the enumeration principle, and the pigeon-hole principle for measure, which are studied in Sections \ref{senum}-\ref{piho}, all imply $\NIN_{[0,1]}$. 
In this way, certain approximation theorems are classified in the Big Five by Theorems~\ref{LEMCOR} and~\ref{flank}, while slight variations or generalisations go far beyond the Big Five in light of Theorems \ref{tomma}, \ref{tach}, \ref{Y}, and \ref{duck555}, but are still equivalent to known principles, in accordance with the general theme of RM. 

\smallskip

Finally, at least two of the above systems yield conservative extensions of $\ACA_{0}$, where $\PHP_{[0,1]}$ expresses that for a sequence of closed sets of measure zero, 
the union also has measure zero (see Section \ref{piho}).  
\begin{thm}\label{weng} ~
\begin{itemize}
\item The system $\ACAo+\NIN_{[0,1]}$ is $\Pi_{2}^{1}$-conservative over $\ACA_{0}$.
\item The system $\ACAo+\PHP_{[0,1]}$ is $\Pi_{2}^{1}$-conservative over $\ACA_{0}$.
\end{itemize}
\end{thm}
\begin{proof}
Kreuzer shows in \cite{kruisje} that $\ACAo+(\blambda)$ is $\Pi_{2}^{1}$-conservative over $\ACA_{0}$, where $(\blambda)$ expresses the existence of the Lebesgue measure $\blambda^{3}$ as a fourth-order functional on $2^{\N}$ and $[0,1]$.  To derive $\NIN_{[0,1]}$ in the former system, let $Y:[0,1]\di \N$ be an injection and derive a contradiction from the sub-additivity of $\blambda$ as follows:  
\[\textstyle
\blambda([0,1])=\blambda(\cup_{n\in \N}  E_{n} )\leq \sum_{n\in \N}\blambda(E_{n})=0
\]
where $E_{n}:=\{ x\in [0,1]:Y(x)=n\}$ is at most a singleton, i.e.\ $\blambda(E_{n})=0$.   The system $\ACAo+(\blambda)$ trivially proves $\PHP_{[0,1]}$.  
\end{proof}

\subsection{The uncountability of the reals}\label{bigger}
In this section, we establish equivalences between approximation theorems involving Bernstein polynomials and the uncountability of the reals.  
Our results also improve the base theory used in \cite{samBIG}.   

\smallskip

First of all, we will study the uncountability of the reals embodied by the following principle, motivated by the observations in Section \ref{couse}.  
As discussed in Section~\ref{mintro}, $\Z_{2}^{\omega}+\QFAC^{0,1}$ does not\footnote{Note that an injection is a special kind of height function, i.e.\ $\NIN_{\alt}$ implies that there is no injection from $[0,1]$ to $\N$.} prove $\NIN_{\alt}$, and the same for the equivalent approximation theorems in Theorem \ref{tomma}.
\begin{princ}[$\NIN_{\alt}$]
The unit interval is not height-countable.  
\end{princ}
This principle was first introduced in \cite{samBIG} where \emph{many} equivalences are established, mainly for basic properties of regulated functions
and related classes.

\smallskip

Secondly, we have the following theorem involving equivalences for $\NIN_{\alt}$.  
Item \eqref{KC} is defined using $B_{f}$ from \eqref{BF} and constitutes a variation of Volterra's early theorem from \cite{volaarde2} that there is no $f:\R\di \R$ satisfying $C_{f}=\Q$. 
In light of the first equivalence, the restriction in item \eqref{KBP} is non-trivial.
%We note that the minimum and maximum in \eqref{fouf} are common in Fourier analysis (\cite{waterdragen}). 
\begin{thm}[$\ACAo+\QFAC^{0,1}$]\label{tomma}
The following are equivalent.
\begin{enumerate}
\renewcommand{\theenumi}{\alph{enumi}}
\item The uncountability of $\R$ as in $\NIN_{\alt}$.\label{KA}
\item For regulated $f:[0,1]\di \R$, there is $x\in (0,1)\setminus \Q$ where $f$ is continuous.\label{KB}  
\item For regulated and pointwise discontinuous $f:[0,1]\di \R$, there is $x\in (0,1)\setminus \Q$ where $f$ is continuous.\label{KBP}  
%\item For regulated $f:[0,1]\di \R$, there is $x\in (0,1)\setminus \Q$ where $f(x)=\lim_{n\di \infty} B_{n}(f, x)$.\label{KC}
\item \(Volterra\) There is no regulated $f:[0,1]\di \R$, such that $B_{f}= \Q$ \label{KC}
\item For regulated $f:[0,1]\di \R$, there is $x\in (0,1)$ where $f$ is continuous.\label{KD}  
%\item For regulated and super-Riemann integrable $f:[0,1]\di \R$, there is $x\in (0,1)$ where $f$ is continuous.\label{KDD}  
\item For regulated $f:[0,1]\di \R$, there is $x\in (0,1)$ where $f(x)=\lim_{n\di \infty} B_{n}(f, x)$.\label{KE}
\item For regulated $f:[0,1]\di \R$, the set $B_{f}$ is not \(height-\)countable.\label{KG}
%\item For regulated $f:[0,1]\di \R$, there is $x\in (0,1)\setminus \Q$ where \label{KD}
%\be\label{fouf}
%\min(f(x+), f(x-))\leq \lim_{n\di \infty} B_{n}(f, x)\leq \max(f(x+), f(x-)).
%\ee
\end{enumerate}
\end{thm}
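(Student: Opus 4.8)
The plan is to prove the seven items equivalent by routing them all through a cycle anchored at \eqref{KA}, and the argument rests on two facts about a regulated $f:[0,1]\di\R$ that are available over $\ACAo+\QFAC^{0,1}$. \textbf{(I)} The discontinuity set $D_{f}$ is \emph{height-countable}: since $(\exists^{2})$ computes the one-sided limits $f(x\pm)$, each $D_{k}$ from \eqref{drux} is a definable set, $D_{f}=\bigcup_{k}D_{k}$, and $H_{f}(x):=(\text{least }k\text{ with }x\in D_{k})$ is a height function for $D_{f}$ precisely because each $D_{k}$ is finite in the sense of Definition \ref{deadd}. \textbf{(II)} By Theorem \ref{echtdecrux} (applied after rescaling the codomain, using that a regulated function is bounded via the compactness argument with $\QFAC^{0,1}$ from the proof of Theorem \ref{LEMCOR}), every continuity point $x\in(0,1)$ of $f$ satisfies $f(x)=\frac{f(x+)+f(x-)}{2}=\lim_{n}B_{n}(f,x)$, so $x\in B_{f}$; hence $[0,1]\subseteq D_{f}\cup B_{f}\cup\{0,1\}$. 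I also use the trivialities that a finite union of height-countable sets is height-countable (pointwise maximum of the height functions), that $\Q\cap[0,1]$ is height-countable (first-occurrence index, via $(\exists^{2})$), and that a subset of a height-countable set is height-countable, so that the existence of any height-countable superset of $[0,1]$ contradicts $\NIN_{\alt}$, i.e.\ \eqref{KA}.

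For the forward directions it suffices to prove $\eqref{KA}\Rightarrow\eqref{KB}$, $\eqref{KA}\Rightarrow\eqref{KC}$, $\eqref{KA}\Rightarrow\eqref{KD}$, $\eqref{KA}\Rightarrow\eqref{KG}$, together with the cheap $\eqref{KB}\Rightarrow\eqref{KBP}$ (restriction to a subclass) and $\eqref{KD}\Rightarrow\eqref{KE}$ (by (II) a continuity point lies in $B_{f}$). The first four are the same contrapositive: if the relevant statement fails for some regulated $f$, then $[0,1]$ lies in a height-countable set. Indeed, $\neg\eqref{KD}$ gives $(0,1)\cap C_{f}=\emptyset$, hence $(0,1)\subseteq D_{f}$ and $[0,1]\subseteq D_{f}\cup\{0,1\}$; $\neg\eqref{KB}$ gives $(0,1)\setminus\Q\subseteq D_{f}$, hence $[0,1]\subseteq D_{f}\cup(\Q\cap[0,1])$; $\neg\eqref{KC}$ gives a regulated $f$ with $B_{f}=\Q\cap[0,1]$, and then by (II) $(0,1)\cap C_{f}\subseteq B_{f}=\Q$, so again $[0,1]\subseteq D_{f}\cup(\Q\cap[0,1])$; and $\neg\eqref{KG}$ gives a regulated $f$ with $B_{f}$ height-countable, so $[0,1]\subseteq D_{f}\cup B_{f}\cup\{0,1\}$ is height-countable by (II). Each conclusion contradicts \eqref{KA}.

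For the reverse directions I argue contrapositively: assume $\NIN_{\alt}$ fails and fix a height function $H$ for $[0,1]$, i.e.\ $H:\R\di\N$ with $E_{n}:=\{x\in[0,1]:H(x)<n\}$ finite for all $n$. Set $f_{1}(x):=2^{-H(x)}$; this is extensional (as $H$ is) and maps into $(0,1]$, and it is regulated with $f_{1}(x-)=f_{1}(x+)=0$ for every $x$, because for $\eps>0$ the set $\{y\in[0,1]:f_{1}(y)\ge\eps\}$ is some $E_{n}$, which, being finite as in Definition \ref{deadd}, has no accumulation point, so $f_{1}(y)<\eps$ for all $y\neq x$ near $x$. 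Since $H$ is total, $f_{1}(x)>0$ for all $x$, so $f_{1}$ is continuous nowhere; and since $B_{n}(f_{1},0)=f_{1}(0)$, $B_{n}(f_{1},1)=f_{1}(1)$ and Theorem \ref{echtdecrux} gives $\lim_{n}B_{n}(f_{1},x)=0$ on $(0,1)$, we get $B_{f_{1}}=\{0,1\}$, so $B_{f_{1}}\cap(0,1)=\emptyset$ and $B_{f_{1}}$ is finite. Thus $f_{1}$ refutes \eqref{KD}, \eqref{KE} and \eqref{KG}. Next put $f_{2}(x):=0$ for $x\in\Q\cap[0,1]$ and $f_{2}(x):=2^{-H(x)}$ otherwise, deciding rationality with $(\exists^{2})$; the same estimate makes $f_{2}$ regulated with vanishing one-sided limits, continuity at $x$ is equivalent to $f_{2}(x)=0$, so $C_{f_{2}}=B_{f_{2}}=\Q\cap[0,1]$, while $f_{2}$ is pointwise discontinuous since $\Q$ is dense in $[0,1]$. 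Hence $f_{2}$ refutes \eqref{KB}, \eqref{KBP} and \eqref{KC}. Combining yields the cycles $\eqref{KA}\Rightarrow\eqref{KB}\Rightarrow\eqref{KBP}\Rightarrow\eqref{KA}$, $\eqref{KA}\Rightarrow\eqref{KC}\Rightarrow\eqref{KA}$, $\eqref{KA}\Rightarrow\eqref{KD}\Rightarrow\eqref{KE}\Rightarrow\eqref{KA}$, and $\eqref{KA}\Rightarrow\eqref{KG}\Rightarrow\eqref{KA}$, so all seven items are equivalent.

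The combinatorics above are bookkeeping with height functions; the genuine content, and the reason $\QFAC^{0,1}$ appears in the base theory rather than the finite-set induction axioms used in \cite{samBIG}, lies in two finiteness facts of the type flagged in the footnote to Definition \ref{deadd}. First, that each $D_{k}$ in \eqref{drux} is finite as in Definition \ref{deadd}: from a failure of every size bound one extracts, using $\QFAC^{0,1}$ to turn ``for all $N$ there is an $N$-tuple of distinct points of $D_{k}$'' into an actual sequence, and then $(\exists^{2})$ plus sequential compactness (both in $\ACAo$), a sequence of distinct points of $D_{k}$ converging to some $x_{0}$, contradicting the existence of $f(x_{0}\pm)$. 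Second, its mirror image, that a finite subset of $\R$ has no accumulation point, which is what makes $f_{1}$ and $f_{2}$ regulated, proved by the same $\QFAC^{0,1}$-flavoured contradiction. I expect this cluster of compactness arguments (together with the boundedness of regulated functions needed to invoke Theorem \ref{echtdecrux}) to be the only delicate part of the write-up.
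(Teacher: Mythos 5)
Your proposal is correct and follows essentially the same route as the paper: the finiteness of each $D_{k}$ from \eqref{drux} via $\QFAC^{0,1}$ and sequential compactness, Theorem \ref{echtdecrux} to identify continuity points with points of $B_{f}$, and the two counterexample functions $2^{-H(x)}$ and its variant vanishing on $\Q$ are exactly the paper's $h$ and $g$. The only (harmless) divergences are organizational --- you prove all forward implications by one uniform contrapositive through height-countable coverings of $[0,1]$ where the paper chains \eqref{KB}$\di$\eqref{KD}$\di$\eqref{KE}, and you apply Theorem \ref{echtdecrux} directly to the regulated function $f_{1}$ where the paper detours through the Thomae-like $\tilde h$ of \eqref{tepz} --- plus your explicit handling of boundedness/rescaling before invoking Theorem \ref{echtdecrux}, which the paper leaves implicit here.
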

\begin{proof}
First of all, \eqref{KA}$\di$\eqref{KB} is proved by contraposition as follows: let $f:[0,1]\di \R$ be regulated and discontinuous on $[0,1]\setminus \Q$.  
In particular, $[0,1]\setminus \Q=D_{f}= \cup_{k\in \N}D_{k}$ where $D_{k}$ is as in \eqref{drux}.  To show that $D_{k}$ is finite, suppose it is not, i.e.\ for any $N\in \N$, there are $x_{0}, \dots x_{N}\in D_{k}$.  
Use $(\exists^{2})$ and $\QFAC^{0,1}$ to obtain a sequence $(x_{n})_{n\in \N}$ in $D_{k}$.  Since sequential compactness follows from $\ACA_{0}$ (\cite{simpson2}*{III.2.2}), the latter sequence has a convergent sub-sequence, say with limit $y\in [0,1]$.  Then either on the left or on the right of $y$, one finds infinitely many elements of the sequence.  Then either $f(y+)$ or $f(y-)$ does not exist, a contradiction, and $D_{k}$ is finite.   

\smallskip

To establish $\neg\NIN_{\alt}$, let $(q_{m})_{m\in \N}$ be an enumeration of $\Q$ without repetitions and define a height function $H:[0,1]\di \N$ for $[0,1]$ as follows:  
$H(x)=n$ in case $x\in D_{n}$ and $n$ is the least such number, $H(x)=m$ in case $x=q_{m}$ otherwise. 
By contraposition and using $\QFAC^{0,1}$, one proves that the union of two finite sets is finite, implying that $D_{k}\cup \{q_{0}, \dots, q_{k}\}$ is finite.
A direct proof in $\ACAo$ is also possible as the second set has an (obvious) enumeration.  Hence, $[0,1]$ is height countable, as required for $\neg\NIN_{\alt}$. 

\smallskip

Secondly, \eqref{KB}$\di$\eqref{KC} is immediate by Theorem \ref{echtdecrux} and the fact that $f(x+)=f(x-)=f(x)$ in case $x\in C_{f}$.   Now assume item \eqref{KC} and suppose $\NIN_{\alt}$ is false, i.e.\ $H:[0,1]\di \N$ is a height function for $[0,1]$. 
Define $g:[0,1]\di [0,1]$ using $(\exists^{2})$: 
\[
g(x):=
\begin{cases}
\frac{1}{2^{H(x)}} & x\not \in \Q  \\
0 & x \in \Q
\end{cases}
\]
To show that $g(x+)=g(x-)=0$ for any $x\in (0,1)$, consider
\be\label{teng}\textstyle
(\forall k\in \N )(\exists N\in \N)(\forall y \in (x-\frac{1}{2^{N}}, x))(|g(y)|<\frac{1}{2^{k}}). 
\ee
Suppose \eqref{teng} is false, i.e.\ there is $k_{0}\in \N$ such that $(\forall N\in \N)(\exists y \in (x-\frac{1}{2^{N}}, x))(|f(y)|\geq \frac{1}{2^{k}})$. 
Modulo the coding of real numbers, apply $\QFAC^{0,1}$ to obtain a sequence of reals $(y_{n})_{n\in \N}$ such that 
\be\label{forion}\textstyle
(\forall N\in \N)( y_{N} \in (x-\frac{1}{2^{N}}, x)\wedge |g(y_{N})|\geq \frac{1}{2^{k_{0}}}).
\ee
Using $\mu^{2}$, we can guarantee that each $y_{n}$ is unique using the (limited) primitive recursion\footnote{Define $h(z):=(\mu m)(z<x-\frac{1}{2^{m}})$, $H(0):=y_{0}$ and $H(n+1)=y_{h(H(n))}$.} available in $\RCAo$.  
Now, by definition, the set $C:=\{x\in [0,1]: H(x)\leq {k_{0}}\}$ is finite, say with upper bound $K_{0}\in \N$.  Apply \eqref{forion} for $N=K_{0}+2$ and note that $y_{0}, \dots, y_{K_{0}+2}$ are in $C$ by the definition of $g$, a contradiction.  Hence, \eqref{teng} is correct, implying that $g$ is regulated.  Now apply item \eqref{KC} to obtain $x_{0}\in (0, 1)\setminus\Q$ such that $g(x_{0})=\lim_{n\di \infty }B_{n}(g, x_{0})$.  
However, $g(x_{0})>0$ and $B_{n}(g, x_{0})=0$, a contradiction, and $\NIN_{\alt}$ must hold.  The equivalence involving item \eqref{KBP} is immediate as $g$ is continuous at every rational point in $[0,1]$, i.e.\ pointwise discontinuous.

\smallskip

Thirdly, we immediately have \eqref{KB}$\di$\eqref{KD}$\di$\eqref{KE} by Theorem \ref{echtdecrux}, and to show that item \eqref{KE} implies $\NIN_{\alt}$, we again proceed by contraposition.  To this end, let $H:\R\di \N$ be a height function for $[0,1]$ and define $h(x):= \frac{1}{2^{H(x)+1}}$.  We now prove prove $\lim_{n\di \infty}B_{n}(h, x)=0$ for all $x\in [0,1]$. 
As Bernstein polynomials only invoke rational function values, $B_{n}(h, x)=B_{n}(\tilde{h}, x)$ for all $x\in [0,1]$ and $n\in \N$, where
\be\label{tepz}
\tilde{h}(x):=
\begin{cases}
0 & \textup{ if }x\not \in \Q\\
\frac{1}{2^{H(x)+1}} & \textup{ otherwise }
\end{cases},
\ee
which is essentially a version of Thomae's function (\cite{thomeke, dagsamXIV}).  
Similar to $g$ above, one verifies that $\tilde{h}$ is continuous on $[0,1]\setminus \Q$ using the fact that $H$ is a height function.  
By Theorem~\ref{echtdecrux}, we have $0=\tilde{h}(x)=\lim_{n\di\infty}B_{n}(\tilde{h}, x)$ for $x\in (0,1)\setminus \Q$.  
By the uniform continuity of the polynomials $p_{k, n}(x)$ on $[0,1]$, we also have $\lim_{n\di\infty}B_{n}(\tilde{h}, q)=0$ for $q\in [0,1]\cap \Q$.
As a result, we obtain $\lim_{n\di \infty}B_{n}(h, x)=0$ for all $x\in [0,1]$, which implies the negations of items \eqref{KD} and \eqref{KE}, as $h(x)>0$ for all $x\in [0,1]$. 
Clearly, item \eqref{KG} implies item \eqref{KE} while the former readily follows from $\NIN_{\alt}$.  Indeed, if item \eqref{KG} is false for some regulated $f:[0,1]\di \R$, then $C_{f}\subset B_{f}=\cup_{n\in \N}E_{n}$ by Theorem \ref{echtdecrux} for a sequence of finite sets $(E_{n})_{n\in \N}$.  Since $D_{f}=\cup_{n\in \N}D_{n}$, $[0,1]$ is height-countable as $D_{n}\cup E_{n}$ is finite by the first paragraph of the proof.  
\end{proof}
We note that the base theory in the previous theorem is much weaker than in \cite{samBIG}. 
Indeed, in the latter, the base theory also contained various axioms governing finite sets, all provable from the induction axiom. 
It seems that the latter can always be replaced by proofs by contradiction involving $\QFAC^{0,1}$.  

\smallskip

It is interesting that we can study items \eqref{KB} and \eqref{KC} without using (basic) real analysis, while items \eqref{KD} and \eqref{KE} seem to (really) require 
the study of the Bernstein approximation of \eqref{tepz}.  Theorem \ref{tomma} goes through for `regulated' replaced by `bounded variation' if we assume e.g.\ a small fragment
of the classical function hierarchy, namely that a regulated function has bounded Waterman variation (\cite{voordedorst}).  
%An alternative approach is found in Theorem \ref{hex} in Section \ref{senum} where the extra assumption is a version of Helly's selection theorem. 

\smallskip

Next, we show that we can weaken the conclusion of item \eqref{KD} in Theorem~\ref{tomma} to the weak continuity notions from Def.\ \ref{KY}; these are a considerable improvement over e.g.\ quasi-continuity in \cite{samBIG} and go back over a hundred years, namely to Young (\cite{jonnypalermo}) and Blumberg (\cite{bloemeken}).  
\bdefi[Weak continuity]\label{KY} For $f:[0,1]\di \R$, we have that
\begin{itemize}
\item $f$ is \emph{almost continuous} \(Husain, see \cites{husain,bloemeken}\) at $x\in [0,1]$ if for any open $G\subset \R$ containing $f(x)$, the set $\overline{f^{-1}(G)}$ is a neighbourhood of $x$,
\item $f$ has the \emph{Young condition} at $x\in [0,1]$ if there are sequences $(x_{n})_{n\in \N}, (y_{n})_{n\in \N}$ on the left and right of $x$ with the latter as limit and $\lim_{n\di \infty}f(x_{n})=f(x)=\lim_{n\di \infty}f(y_{n})$.
\end{itemize}
\edefi\noindent
With these definitions in place, we now have the following corollary to Theorem~\ref{tomma}, which establishes a nice degree of robustness for the RM of $\NIN_{\alt}$.  
\begin{cor}\label{fraaaa}
Theorem \ref{tomma} holds if we replace `continuity' in item \eqref{KD} by `almost continuity' or `the Young condition'.
\end{cor}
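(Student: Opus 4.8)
The plan is to mimic the structure of the proof of Theorem \ref{tomma}, checking at each point where `continuity' appeared that the weaker notions (almost continuity, Young condition) suffice. The key observation is the following chain of implications among the relevant continuity notions at a point $x\in(0,1)$: ordinary continuity at $x$ implies the Young condition at $x$ (take any sequences approaching $x$ from the left and right), and almost continuity is a consequence of continuity as well; conversely, for a \emph{regulated} function $f$, the Young condition at $x$ already forces $f(x+)=f(x)=f(x-)$, since the left and right limits exist and the sequences $(x_n),(y_n)$ witness that these limits equal $f(x)$. Thus, for regulated functions, `continuity at $x$', `the Young condition at $x$', and `$f(x+)=f(x)=f(x-)$' are equivalent, all being captured by the arithmetical formula $\osc_f(x)=0$ (equivalently $f(x+)=f(x-)=f(x)$) available via $(\exists^2)$. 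A similar equivalence should be verified for almost continuity of regulated functions, which is the one slightly delicate point.

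First I would handle the directions that \emph{weaken} the conclusion: items \eqref{KB}, \eqref{KBP}, \eqref{KD} with `continuity' replaced by `almost continuity' or `the Young condition'. Here one simply notes that the original Theorem \ref{tomma} already produces a genuine point of continuity, and ordinary continuity implies both weaker notions, so the weakened items follow a fortiori from the (unweakened) $\NIN_{\alt}$ via Theorem \ref{tomma}. Next I would handle the directions that \emph{use} the conclusion, i.e.\ the implications from the weakened item \eqref{KD} back to $\NIN_{\alt}$ (and onward to \eqref{KE}, \eqref{KC}, \eqref{KG}). This is where the equivalence remarked above does the work: given a regulated $f$ and a point $x\in(0,1)\setminus\Q$ satisfying the Young condition (resp.\ almost continuity), we deduce $f(x+)=f(x)=f(x-)$, hence by Theorem \ref{echtdecrux} that $f(x)=\lim_n B_n(f,x)$, so $x\in B_f$, exactly as in the original proof. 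The contrapositive arguments with the functions $g$ and $\tilde h$ from the proof of Theorem \ref{tomma} then go through verbatim: those functions are continuous at every irrational, hence satisfy the Young condition and are almost continuous there, so producing such a point still yields the desired contradiction with $g(x_0)>0=B_n(g,x_0)$.

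The one step I expect to require genuine care is verifying the precise relationship between almost continuity (in Husain's sense: $\overline{f^{-1}(G)}$ is a neighbourhood of $x$ whenever $G$ is open containing $f(x)$) and the arithmetical condition $\osc_f(x)=0$ for regulated $f$, and checking this can be formalised in $\ACAo+\QFAC^{0,1}$. For a regulated function, if $f(x+)$ or $f(x-)$ differs from $f(x)$ by more than $2^{-k_0}$, then on a one-sided neighbourhood of $x$ the values of $f$ stay outside $G:=B(f(x),2^{-k_0-1})$, so $f^{-1}(G)$ misses that whole interval and its closure cannot be a neighbourhood of $x$ — hence almost continuity at $x$ forces $f(x+)=f(x)=f(x-)$, as needed. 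Conversely, genuine continuity trivially gives almost continuity. I would spell out this two-sided argument carefully (using $(\exists^2)$ to make the sets and closures meaningful) since it is the only place the new notions are not immediately interchangeable with the old one; the rest is a routine transcription of the proof of Theorem \ref{tomma}, with $\QFAC^{0,1}$ used exactly as before to extract sequences from the non-finiteness hypotheses.
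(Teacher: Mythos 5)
Your proposal is correct, but it is organised differently from the paper's proof. You prove a general pointwise lemma: for regulated $f$ and $x\in(0,1)$, both almost continuity and the Young condition at $x$ are \emph{equivalent} to ordinary continuity at $x$ (via $f(x+)=f(x)=f(x-)$), so the weakened item \eqref{KD} is literally the original item \eqref{KD} and Theorem \ref{tomma} applies unchanged; your verification of the two directions of this lemma (sequential characterisation of the one-sided limits for the Young condition; the one-sided-neighbourhood argument showing $\overline{f^{-1}(G)}$ misses an interval $(x,x+\delta)$ when $f(x+)\ne f(x)$) is sound and formalisable in the stated base theory. The paper instead argues only in the contrapositive direction and only for the single counterexample function $h(x)=\frac{1}{2^{H(x)+1}}$ from the proof of Theorem \ref{tomma}: it checks directly that $h$ fails the Young condition everywhere (the witnessing sequences would put infinitely many points into a finite level set of the height function $H$) and fails almost continuity everywhere (for $\frac{1}{2^{k_{0}}}<h(x_{0})$ the preimage $h^{-1}(B(h(x_{0}),\frac{1}{2^{k_{0}}}))$ is finite, hence closed and not a neighbourhood of $x_{0}$). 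Your route buys a stronger and more transparent statement of robustness — the new notions collapse to the old one on the whole class of regulated functions — at the cost of one extra (easy but necessary) verification; the paper's route is shorter and would still work in situations where the general pointwise equivalence failed, since it only needs the specific counterexample to violate the weak notions.
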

\begin{proof}
Assuming $\NIN_{\alt}$, the function $h:[0,1]\di \R$ from the proof of Theorem \ref{tomma} does not satisfy either weak continuity notion anywhere.  
Indeed, the sequences from the Young condition are immediately seen to violate the fact that $H$ from the proof of Theorem \ref{tomma} is a height-function for $[0,1]$.
Similarly, take any $x_{0}\in [0,1]$ and note that for $k_{0}\in \N$ such that $\frac{1}{2^{k_{0}}}< h(x_{0})$, the set $f^{-1}(B(x_{0}, \frac{1}{2^{k_{0}}}))$ is finite, as $H$ is a height-function for $[0,1]$.  
 \end{proof}

%We shall sharpen the second item in Section \ref{senum}.  
\subsection{The enumeration principle}\label{senum}
In this section, we establish equivalences between approximation theorems involving Bernstein polynomials and the enumeration principle for height-countable sets as in Principle \ref{gelum} below.  
Our results significantly improve the base theory used in \cite{dagsamXI} and provide the `definitive' RM of Jordan's decomposition theorem (\cite{jordel}), especially in light of the new 
connection to Helly's selection theorem as in Principle \ref{HEY}. 

\smallskip

First of all, we will study Principle \ref{gelum}, motivated by Section \ref{couse}.  
\begin{princ}[$\enum$]\label{gelum}
A height-countable set in $[0,1]$ can be enumerated.
\end{princ}
We stress that textbooks (see e.g.\ \cite{voordedorst}*{p.\ 28} and \cite{rudin}*{p.\ 97}) generally only prove that certain sets are (height) countable, i.e.\ no enumeration is provided, while the latter is 
readily assumed in other places, i.e.\ $\enum$ is implicit in the mathematical mainstream.
By the results in \cite{dagsamXI}, $\ACAo+\enum$ proves $\ATR_{0}$ and $\FIVE^{\omega}+\enum$ proves $\SIX$, i.e.\ $\enum$ is rather `explosive', in contrast to $\NIN_{[0,1]}$ by Theorem~\ref{weng}. 

\smallskip

A variation of $\enum$ for countable sets, called $\cocode_{0}$, is studied in \cite{dagsamXI} where {many} equivalences are established; the base theories used in \cite{dagsamXI} are however not always elegant.  Now, most proofs in \cite{dagsamXI} go through \emph{mutatis mutandis} for `countable' replaced by `height-countable'; the latter's central role was not known during the writing of \cite{dagsamXI}. We provide some examples in Theorem \ref{tach}, including new results for Bernstein approximation.   %In fact, the base theory should be weaker as we e.g.\ do not have guarantee that height-countable sets are countable.  
We also need the following principle which is a contraposed version of \emph{Helly's selection theorem} (\cites{hellyeah, naatjenaaien}); the RM of the latter for \emph{codes} of $BV$-functions is studied in e.g.\ \cite{kreupel}.  
\begin{princ}[\textsf{Helly}]\label{HEY}
Let $(f_{n})_{n\in \N}$ be a sequence of $[0,1]\di [0,1]$-functions in $BV$ with pointwise limit $f:[0,1]\di [0,1]$ which is not in $BV$.  Then there is unbounded $g\in \N^{\N}$ such that $g(n)\leq V_{0}^{1}(f_{n})\leq g(n)+1$ for all $n\in \N$,
\end{princ}
Intuitively, \textsf{Helly} is a rather weak statement that significantly simplifies the RM-study of $\enum$ and $\NIN_{\alt}$.  

\smallskip

Next, we establish Theorem \ref{tach} where we note that the base theory is weaker than in \cite{dagsamXI}. 
Indeed, in the latter, the base theory also contains various axioms governing countable sets, mostly provable from the induction axiom. 
\begin{thm}[$\ACAo+\QFAC^{0,1}$]\label{tach} 
The following are equivalent.
\begin{enumerate}
\renewcommand{\theenumi}{\alph{enumi}}
\item The enumeration principle $\enum$.\label{LA}
\item For regulated $f:[0,1]\di \R$, there is a sequence $(x_{n})_{n\in \N}$ enumerating $D_{f}$.\label{LB}  
\item For regulated $f:[0,1]\di \R$ and $p, q\in [0,1]\cap \Q$, $\sup_{x\in [p,q]}f(x)$ exists\footnote{To be absolutely clear, we assume the existence of a `supremum operator' $\Phi:\Q^{2}\di \R$ such that $\Phi(p, q)=\sup_{x\in [p, q]}f(x)$ for all $p, q\in [0,1]\cap \Q$.  For Baire 1 functions, this kind of operator exists in $\ACAo$ by \cite{dagsamXIV}*{\S2}, even for irrational intervals.}.\label{LBQ}  
\item For regulated and pointwise discontinuous $f:[0,1]\di \R$, there is a sequence $(x_{n})_{n\in \N}$ enumerating $D_{f}$.\label{LBP}  
\item For regulated $f:[0,1]\di \R$, there is $(x_{n})_{n\in \N}$ enumerating $[0,1]\setminus B_{f}$.\label{LC}  
\item \(Jordan\) For $f:\R\di \R$ which is in $BV([0, a])$ for all $a>0$, there are monotone $g, h:\R\di \R$ such that $f(x)=g(x)-h(x)$ for $x\geq 0$.\label{LD}
\item The combination of: \label{LE}
\begin{itemize}
\item[(f.1)] Helly's selection theorem as in Principle \ref{HEY}.
%\item[(f.1')] for a sequence $(f_{n})_{n\in \N}$ of $BV$-functions on $[0,1]$, there is $g$ such that $V_{0}^{1}(f_{n})\leq g(n)$ for all $n\in \N$,
\item[(f.2)] \(Jordan\) For $f:[0,1]\di \R$ in $BV$, there are non-decreasing $g, h:\R\di \R$ such that $f(x)=g(x)-h(x)$ for $x\in [0,1]$.
\end{itemize}
%of reals $(x_{n})_{n\in \N}$ such that $(\forall n\in \N)(x\ne x_{n}  \di f(x)=\lim_{n\di \infty} B_{n}(f, x))$.\label{LC}
\end{enumerate}
\end{thm}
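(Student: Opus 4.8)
The plan is to prove every item equivalent to \eqref{LA}, drawing on two things: the structural fact, from the first paragraph of the proof of Theorem~\ref{tomma}, that a regulated $f$ has $D_{f}=\cup_{k}D_{k}$ with each $D_{k}$ from \eqref{drux} finite, so that $D_{f}$ is canonically height-countable via $H(x):=(\mu k)(x\in D_{k})$; and the transfer remark preceding Principle~\ref{gelum}, that the $\cocode_{0}$-arguments of \cite{dagsamXI} remain valid with `countable' replaced by `height-countable'. Most implications will run through one workhorse: given a height function $H$ for $A\subseteq[0,1]$, set $g_{A}(x):=2^{-H(x)}$ when $F_{A}(x)=1$ and $g_{A}(x):=0$ otherwise. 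Since $\{x:g_{A}(x)\ge 2^{-n}\}\subseteq\{x\in A:H(x)<n+1\}$ is finite, every left and right limit of $g_{A}$ vanishes; hence $g_{A}$ is regulated with $D_{g_{A}}=A$, and by Theorem~\ref{echtdecrux} also $[0,1]\setminus B_{g_{A}}=A$.

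For \eqref{LA}$\di$\eqref{LB}: apply $\enum$ to the height-countable set $D_{f}$. For \eqref{LB}$\di$\eqref{LA}, \eqref{LC}$\di$\eqref{LA} and \eqref{LBQ}$\di$\eqref{LA}: feed $g_{A}$ to the item in question; an enumeration of $D_{g_{A}}$, resp.\ of $[0,1]\setminus B_{g_{A}}$, is an enumeration of $A$, while the supremum operator for $g_{A}$ returns, on each dyadic interval, the least value of $H$ attained there, so a bisection search extracts the boundedly many points of every $H$-level and a diagonalisation enumerates $A$. For \eqref{LB}$\di$\eqref{LC}: $[0,1]\setminus B_{f}\subseteq D_{f}$ by Theorem~\ref{echtdecrux}. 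For \eqref{LB}$\di$\eqref{LBQ}: given rational $p<q$, write $\sup_{[p,q]}f$ as the maximum of the supremum of the sequence $(f(x_{m}))_{x_{m}\in[p,q]}$ (with $(x_{m})$ enumerating $D_{f}$) and the supremum of $f$ over its continuity points in $[p,q]$, the latter recovered locally from nearby values via a modulus, as in \cite{dagsamXI}; and \eqref{LBQ}$\di$\eqref{LB} because applying the operator to $f$ and to $-f$ yields $\osc_{f}([p,q])$ for all rational $p,q$, whence bisection locates, for each $k$, the finitely many points of oscillation exceeding $2^{-k}$. Finally \eqref{LB}$\di$\eqref{LBP} is trivial, and \eqref{LBP}$\di$\eqref{LA} again uses $g_{A}$: if $[0,1]\setminus A$ is dense then $g_{A}$ is moreover pointwise discontinuous; if not, some interval lies in $A$, so $[0,1]$ is itself height-countable, and repeating the construction with the Cantor set $\mathcal C$ in place of $A$—which inherits a height function by restriction, is nowhere dense, and is non-enumerable by a diagonal argument already in $\RCAo$—yields a regulated, pointwise discontinuous function with non-enumerable discontinuity set, contradicting \eqref{LBP}.

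For \eqref{LA}$\di$\eqref{LD}: if $f$ is $BV$ on every $[0,a]$, then on each such interval only finitely many points carry a jump exceeding $2^{-k}$, so $D_{f}$ is height-countable; $\enum$ enumerates it, after which the total-variation function $x\mapsto V_{0}^{x}(f)$ and $x\mapsto V_{0}^{x}(f)-f(x)$ are built and shown monotone exactly as in \cite{dagsamXI}. Item \eqref{LE} follows in the same vein: the Jordan clause (f.2) of \eqref{LE} is \eqref{LD} on $[0,1]$ extended constantly, and Helly's Principle~\ref{HEY} is obtained from $\enum$ as in \cite{dagsamXI}. For the converses \eqref{LD}$\di$\eqref{LA} and \eqref{LE}$\di$\eqref{LA} one encodes a height function for $A$ into a $BV$ step function—resp.\ a sequence of $BV$ functions whose variations are governed by Helly's principle—whose Jordan decomposition recovers $A$, mirroring the $\cocode_{0}$-proofs of \cite{dagsamXI}.

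I expect the $BV$ direction, i.e.\ \eqref{LD}$\di$\eqref{LA} and \eqref{LE}$\di$\eqref{LA}, to be the main obstacle: a height function only certifies that each level of $A$ is finite, without bounding its size, so distributing \emph{summable} jumps over $A$—which an honest $BV$ encoding requires—is delicate, and this is exactly where $\QFAC^{0,1}$ (and, for \eqref{LE}, Helly's principle) does the essential work, following \cite{dagsamXI}. A secondary point is that the case split in \eqref{LBP}$\di$\eqref{LA} cannot be avoided, since $\ACAo+\QFAC^{0,1}$ does not decide whether $[0,1]$ is height-countable—indeed $\NIN_{\alt}$ is not provable even in $\Z_{2}^{\omega}+\QFAC^{0,1}$.
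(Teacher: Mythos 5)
Your overall architecture coincides with the paper's: the workhorse function $g_{A}$ is exactly the function $g$ from \eqref{zopi}, the implications \eqref{LA}$\di$\eqref{LB}$\di$\eqref{LC}/\eqref{LBQ} and the reversals via Theorem \ref{echtdecrux} run as in the paper, and your use of interval-halving for \eqref{LBQ}$\di$\eqref{LA} matches the paper's. Two cosmetic differences: for \eqref{LBP} the paper avoids your density case-split and the Cantor-set detour entirely by first using $\mu^{2}$ to enumerate $A\cap \Q$ and assuming WLOG that $A\cap\Q=\emptyset$, so that $g_{A}$ is automatically continuous on a dense set; and the paper closes \eqref{LBQ} by going straight back to $\enum$ rather than through the oscillation of $\pm f$. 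Both of your variants are workable.

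The genuine gap is the one you yourself flag and then leave open: the reversals \eqref{LD}$\di$\eqref{LA} and \eqref{LE}$\di$\eqref{LA}, where "encoding a height function into a $BV$ step function" collides with the fact that $H$ gives no size bounds on the levels $A_{n}$, so summable jumps cannot be distributed over $A$. The paper resolves this differently in the two items, and neither resolution is a routine port of the $\cocode_{0}$ arguments. For \eqref{LD} the half-line formulation is doing the work: one takes $f:\R\di\R$ to be the indicator of (translated copies of) $A_{n}$ on $[0,n]$, so that on every $[0,a]$ the function is the indicator of a \emph{finite} set and hence in $BV([0,a])$ with no summability needed; the monotone $g,h$ from the decomposition then have enumerable discontinuity sets by \cite{dagsamX}*{Theorem 3.33}, and these capture $A$. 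For \eqref{LE} the argument is a case split on whether $f:=\mathbb{1}_{A}\in BV$: if so, apply (f.2) directly; if not, apply (f.1) to the sequence $(\mathbb{1}_{A_{n}})_{n\in\N}$ to obtain $g_{0}\in\N^{\N}$ with $V_{0}^{1}(\mathbb{1}_{A_{n}})\leq g_{0}(n)+1$, which yields the explicit size bound $|A_{n}|\leq g_{0}(n)+2$; only \emph{then} can one rescale the jumps of \eqref{zopi} to $\frac{1}{2^{n}(g_{0}(n)+2)}$, making the resulting $\tilde{g}$ honestly $BV$, and conclude via (f.2). This case split and the extraction of $g_{0}$ from Principle \ref{HEY} are the essential content of the implication, not a detail to be inherited from \cite{dagsamXI}, so your proof is incomplete precisely there. (A small correction: the role of $\QFAC^{0,1}$ is in the compactness argument showing each $D_{k}$ is finite and that $BV$ functions are regulated, not in the jump-distribution step.)
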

\begin{proof}
The implication \eqref{LA}$\di$\eqref{LB} follows by noting that $D_{k}$ as in \eqref{drux} is finite, as established in the proof of Theorem \ref{tomma}, and applying $\enum$.  
The implication \eqref{LB}$\di$\eqref{LC} follows from Theorem \ref{echtdecrux}.  The implication \eqref{LB}$\di$\eqref{LBQ} is immediate as we can replace the supremum over $p, q$ in $\sup_{x\in [p,q]}f(x)$ by a supremum over $[p, q]\cap (\Q\cup D_{f})$.  

\smallskip

For the implication \eqref{LC}$\di$\eqref{LA}, let $A$ be height-countable, i.e.\ there is $H:\R\di \N$ such that $A_{n}:=\{x\in [0,1]: H(x)<n\}$ is finite.  
Note that we can use $\mu^{2}$ to enumerate $A\cap \Q$, i.e.\ we may assume the latter is empty. 
Now define $g:[0,1]$ as
\be\label{zopi}
g(x):=
\begin{cases}
\frac{1}{2^{n+1}} & \textup{ if $x\in A$ and $n$ is the least natural such that $H(x)<n$}\\
0 & \textup{ if $x\not \in A$}
\end{cases}.
\ee
The function $g$ satisfies $g(x+)=g(x-)=0$ for $x\in (0,1)$, which is proved in exactly the same way as in the proof of Theorem \ref{tomma}.
Let $(x_{n})_{n\in \N}$ be the sequence provided by item \eqref{LC} and note that by Theorem \ref{echtdecrux}, $(\forall n\in \N)(x\ne x_{n} )$ implies $g(x)=\frac{g(x+)+g(x-)}{2}=0$ for any $x\in (0,1)$. 
Hence, $(x_{n})_{n\in \N}$ includes all elements of $A$, and $(\exists^{2})$ can remove all elements not in $A$, as required for $\enum$.  Item \eqref{LB} trivially implies item \eqref{LBP}, while $g$ from \eqref{zopi} is continuous on the rationals and therefore pointwise discontinuous, i.e.\ item \eqref{LBP} also implies item \eqref{LA}.  To show that item \eqref{LBQ} implies $\enum$, apply the former to $g$ as in \eqref{zopi}.  In particular, one enumerates $D_{g}$ using the usual interval-halving technique.      

\smallskip

For the implication \eqref{LA}$\di \eqref{LD}$, let $f$ be as in the former and define $D_{n,k}$ as:
\[\textstyle
 D_{n,k}:=\{ x\in [0,n]: |f(x)-f(x+)|>\frac{1}{2^{k}}\vee |f(x)-f(x-)|>\frac{1}{2^{k}} \},
\]
where we note that $BV$-functions are regulated (using $\QFAC^{0,1}$) by \cite{dagsamX}*{Theorem~3.33}.
As for $D_{k}$ in \eqref{drux}, this set is finite and $D_{f}=\cup_{n, k\in \N}D_{n,k}$ can be enumerated thanks to $\enum$.  
Now consider the \emph{variation function} defined as:
\be\label{tomb}\textstyle
V_{0}^{y}(f):=\sup_{0\leq x_{0}< \dots< x_{n}\leq y}\sum_{i=0}^{n} |f(x_{i})-f(x_{i+1})|, 
\ee
where the supremum is over all partitions of $[0, y]$.  Since we have an enumeration of $D_{f}$, \eqref{tomb} can be defined using $\exists^{2}$ by restricting the supremum to $\Q$ and this enumeration. 
By definition, $g(x):=\lambda x.V_{0}^{y}(f)$ is non-decreasing and the same for $h(x):=g(x)-f(x)$.  
For the reverse implication, let $A$ be height-countable, i.e.\ there is $H:\R\di \N$ such that $A_{n}:=\{x\in [0,1]: H(x)<n\}$ is finite.  
Then let $f:\R\di \R$ be the indicator function of $A_{n}$ on $[0, n]$, which is $BV$ on $[0, n]$ by \cite{dagsamX}*{Theorem 3.33}.
Now apply item \eqref{LD} and note that $(\exists^{2})$ can enumerate $D_{g}$ for monotone $g$ by \cite{dagsamX}*{Theorem 3.33}, i.e.\ $\enum$ now follows. 

\smallskip

To establish item \eqref{LE} using $\enum$, sub-item (f.2) is a special case of item \eqref{LD}.  For sub-item~(f.1), consider Helly's selection theorem, usually formulated as follows. 
\begin{center}
\emph{Let $(f_{n})_{n\in \N}$ be a sequence of $BV$-functions such that $f_{n}$ and $V_{0}^{1}(f_{n})$ are uniformly bounded.  Then there is a sub-sequence $(f_{n_{k}})_{k\in \N}$ with pointwise limit $f\in BV$.}
\end{center}
To establish the centred statement, Helly's original proof from \cite{hellyeah}*{p.\ 287} or \cite{naatjenaaien}*{p.~222} goes through in $\ACAo+\enum$ as follows: for a sequence $(f_{n})_{n\in \N}$ in $BV$ as above, one uses $\enum$ to obtain sequences of monotone functions $(g_{n})_{n\in \N}$ and $(h_{n})_{n\in \N}$ such that $f_{n}=g_{n}-h_{n}$.  
Then $(g_{n})_{n\in \N}$ and $(h_{n})_{n\in \N}$ have convergent sub-sequences with limits $g$ and $h$ that are monotone, which is (even) provable in $\ACAo$. 
Essentially by definition, $f=g-h$ is then the limit of the associated sub-sequence of $(f_{n})_{n\in \N}$.  
To obtain sub-item (f.1), use the contraposition of the centred statement, i.e.\ if the limit function $f$ is not in $BV$, then $(\forall N\in \N)(\exists n)(V_{0}^{1}(f_{n})\geq N )$. 
As in the previous paragraph, $V_{0}^{1}(f_{n})$ can be defined using $\exists^{2}$, given $\enum$.  The function $g\in \N^{\N}$ from the conclusion of sub-item (f.1) is therefore readily obtained (using $\QFAC^{0,0}$ and $\exists^{2}$). 
%$\enum$ can enumerate $\cup_{n\in \N}D_{f_{n}}$.  Using the latter, one defines $\lambda y,n.V_{0}^{y}(f_{n})$ as in the previous.

\smallskip

For the remaining implication \eqref{LE}$\di$\eqref{LA}, let $A$ be height-countable, i.e.\ there is $H:\R\di \N$ such that $A_{n}:=\{x\in [0,1]: H(x)<n\}$ is finite.  
As above, we may assume $A\cap \Q=\emptyset$ as $\exists^{2}$ can enumerate the rationals in $A$.  
Define $f_{n}(x):=\mathbb{1}_{A_{n}}(x)$, which is $BV$ by \cite{dagsamX}*{Theorem 3.33} and note $\lim_{n\di \infty}f_{n}=f$ where $f:=\mathbb{1}_{A}$.  
If $f\in BV$, apply sub-item (f.2) and recall that $(\exists^{2})$ can enumerate $D_{g}$ for monotone $g$ by \cite{dagsamX}*{Theorem 3.33}, i.e.\ $\enum$ now follows. 
If $f\not \in BV$, let $g_{0}\in \N^{\N}$ be the function provided by sub-item (f.1) and note that $V_{0}^{1}(f_{n})\leq g_{0}(n)+1$ implies that $A_{n}$ is finite \emph{and} has size bound $g_{0}(n)+2$.  Now let $\tilde{g}:[0,1]\di \R$ be \eqref{zopi} with `$\frac{1}{2^{n}}$' replaced by `$\frac{1}{2^{n}(g_{0}(n)+2)}$'.  Clearly, for any partition of $[0,1]$, we have $\sum_{i=0}^{n} |\tilde{g}(x_{i})-\tilde{g}(x_{i+1})|\leq \sum_{i=0}^{n}\frac{1}{2^{i}}\leq 2$, i.e.\ $\tilde{g}$ is in $BV$.
Applying sub-item (f.2) to $\tilde{g}$, $\enum$ follows as before, and we are done. 
\end{proof}
Regarding item \eqref{LC} in Theorem \ref{tach}, basic examples show that $C_{f}\ne B_{f}$, while the first equivalence in Theorem \ref{tomma} shows that the restriction in item \eqref{LBP} is non-trivial.
Item \eqref{LBQ} expresses that the Banach space of regulated functions requires $\enum$, in contrast to e.g.\ the Banach space of continuous functions (\cite{simpson2}*{IV.2.13}).
Moreover, the use of monotone functions in Theorem \ref{tach} can be replaced by weaker conditions, as follows.
\begin{cor}
One can replace `monotone' in items \eqref{LD} or \eqref{LE} by:
\begin{itemize}
\item $U_{0}$-function, or:
\item regulated $f:[0,1]\di \R$ such that for all $x\in (0,1)$, we have
\be\label{fereng}\textstyle
|f(x)-\lim_{n\di \infty} B_{n}(f, x)|\leq |\frac{f(x+)-f(x-)}{2}|. 
\ee
%$\min(k(x+), k(x-))\leq k(x)\leq \max(k(x+), k(x-))$.
\end{itemize}
\end{cor}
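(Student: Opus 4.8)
The plan is to collapse the two replacements into a single statement and then transplant the proof of Theorem~\ref{tach}, altering only one step. First I would note that the three classes are nested: a non-decreasing $f$ satisfies $f(x-)\leq f(x)\leq f(x+)$, hence is a $U_{0}$-function; and for any $U_{0}$-function $f$ one has $\min(f(x+),f(x-))\leq f(x)\leq\max(f(x+),f(x-))$, so by Theorem~\ref{echtdecrux} the value $\lim_{n}B_{n}(f,x)=\tfrac{f(x+)+f(x-)}{2}$ lies in that same interval, which is exactly \eqref{fereng}. Thus monotone $\subseteq U_{0}\subseteq\mathcal{C}$, where $\mathcal{C}$ denotes the regulated functions satisfying \eqref{fereng}; since enlarging the class only \emph{weakens} items~\eqref{LD} and~\eqref{LE}, it suffices to prove that these items with ``monotone'' replaced by $\mathcal{C}$ remain equivalent to $\enum$. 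The forward implications ($\enum\di$ the modified items) are then immediate, since $\enum$ already yields \emph{monotone} decompositions by Theorem~\ref{tach}, and these lie in $\mathcal{C}$.

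For the reverse implications I would re-run the proofs of \eqref{LD}$\di$\eqref{LA} and \eqref{LE}$\di$\eqref{LA} verbatim, the only difference being that monotonicity of the produced function $g$ was used there solely to enumerate $D_{g}$ via $(\exists^{2})$. The substitute rests on the following observation. The test functions \eqref{zopi} and $\tilde g$ appearing in those proofs (and the arranged-indicator function used for item~\eqref{LD}) all satisfy $\varphi(x+)=\varphi(x-)=0$ for every $x\in(0,1)$, with $\varphi(x)\neq 0$ precisely on a height set. If $\varphi=g-h$ with $g,h\in\mathcal{C}$, then $\varphi(x\pm)=0$ forces $g(x+)=h(x+)$ and $g(x-)=h(x-)$, so by Theorem~\ref{echtdecrux} and the $\mathcal{C}$-condition applied to $g$ and to $h$, both $g(x),h(x)\in[\min(g(x+),g(x-)),\max(g(x+),g(x-))]$; since $g(x)-h(x)=\varphi(x)$, this yields $|g(x+)-g(x-)|\geq|\varphi(x)|$. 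Hence $g$ carries a genuine jump of size at least the (known, computable) value of $\varphi$ at each point of the height set $A$, so that $A$ is contained in the discontinuity set $D_{g}$, which is the union over $k$ of the finite sets $\{x:|g(x+)-g(x-)|>2^{-k}\}$.

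To finish, I would recover an enumeration of $A$ from $g$ and $h$: one searches dyadic subintervals for jumps of $g$, and sieves the candidates using the height function $H$ and the characteristic function $F_{A}$ to retain exactly the points of $A$. For item~\eqref{LE} this is powered by sub-item (f.1): Helly's principle supplies $g_{0}\in\N^{\N}$, the weights $v_{n}:=\tfrac{1}{2^{n}(g_{0}(n)+2)}$ make the test function $\tilde g$ lie in $BV$ with $V_{0}^{1}(\tilde g)\leq 4$, and $g_{0}(n)+2$ bounds the size of the $n$-th level set, so the search over each scale terminates. For item~\eqref{LD} one runs the same argument on each unit interval with the unit-jump arranged-indicator test function. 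In both cases the argument uses only $(\exists^{2})$, $\mu^{2}$ and $\QFAC^{0,1}$, and the $U_{0}$-case is literally the same, since the jump inequality $|g(x+)-g(x-)|\geq|\varphi(x)|$ uses nothing beyond \eqref{fereng}.

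The step I expect to be the main obstacle is exactly this ``search for jumps of $g$'': a function in $\mathcal{C}$ need not be of bounded variation, unlike a monotone one, so there is no a~priori computable bound on the number of jumps of $g$ exceeding a given size in an interval, and detecting a jump of $g$ inside a subinterval may in general demand data (suprema/infima of $g$, an oscillation function) not furnished by $(\exists^{2})$ alone. The resolution has to be that the weighting is chosen sharply enough — via $v_{n}$ and the bounds $g_{0}(n)+2$ — that, although $g$ may be wild away from $A$, the finitely many jumps of $g$ lying \emph{over} $A$ at each scale can be located and counted from the data at hand; making this precise, and verifying it stays within $\ACAo+\QFAC^{0,1}$, is the heart of the proof.
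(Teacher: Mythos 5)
Your reduction is the right one, and it matches the paper's: the forward direction is trivial from the inclusions monotone $\subseteq U_{0}=\mathcal{C}$ (indeed, via Theorem \ref{echtdecrux}, condition \eqref{fereng} is just a restatement of the $U_{0}$-condition \eqref{zolk} for regulated functions), and in the reverse direction the only place monotonicity entered the proof of Theorem \ref{tach} was the fact that $(\exists^{2})$ can enumerate $D_{g}$ for monotone $g$. Your jump estimate $|g(x+)-g(x-)|\geq|\varphi(x)|$ for $x\in A$ is also correct. But the step you defer to the end --- ``searching dyadic subintervals for jumps of $g$'' --- is not a technicality to be tidied up afterwards; it is the entire content of the corollary, and the resolution you sketch cannot work as stated. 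Knowing that the level sets $\{x:|g(x+)-g(x-)|\geq v_{n}\}$ are finite with explicit size bounds $g_{0}(n)+2$ does not make them enumerable over $\ACAo+\QFAC^{0,1}$: if finiteness together with a size bound sufficed to locate a set of reals, then $\enum$ itself would be provable in that base theory, contradicting the fact that $\enum$ implies $\NIN_{[0,1]}$, which even $\Z_{2}^{\omega}$ cannot prove. So the weights $v_{n}$ buy you membership of each $A_{n}$ in a finite jump set of $g$, but by themselves give no handle for finding those jumps.

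The paper closes exactly this gap by citation: \cite{dagsamXIV}*{Theorem 2.16} states that $(\exists^{2})$ suffices to enumerate the discontinuity points of a $U_{0}$-function (equivalently, of a regulated function satisfying \eqref{fereng}), after which the proof of Theorem \ref{tach} goes through verbatim. The structural fact driving that theorem --- and the one missing from your argument --- is that the sandwich condition $\min(g(x+),g(x-))\leq g(x)\leq\max(g(x+),g(x-))$ forces $\sup_{x\in[p,q]}g(x)$ and $\inf_{x\in[p,q]}g(x)$ to agree with the corresponding suprema and infima over $[p,q]\cap\Q$, since the one-sided limits are themselves limits along rational sequences. Hence $(\exists^{2})$ computes oscillations of $g$ over rational intervals from rational function values alone, and it is this computability, not the cardinality bounds, that permits the jump points to be located. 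You half-noticed this when you remarked that detecting a jump ``may demand suprema/infima of $g$ not furnished by $(\exists^{2})$ alone''; for the classes in the corollary they \emph{are} so furnished, and supplying that observation (or the citation) is the step your proof is missing.
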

\begin{proof}
By \cite{dagsamXIV}*{Theorem 2.16}, $(\exists^{2})$ suffices to enumerate the discontinuity points for functions satisfying the items in the corollary.
\end{proof}
Next, Helly's theorem as in Principle \ref{HEY} is useful in the RM of the uncountability of $\R$, as follows. 
\begin{thm}[$\ACAo+\textsf{\textup{Helly}}$]\label{hex}
The following are equivalent.
\begin{enumerate}
\renewcommand{\theenumi}{\alph{enumi}}
\item The uncountability of $\R$ as in $\NIN_{\alt}$.\label{KAA}
\item \(Volterra\) There is no $f:[0,1]\di \R$ in $BV$, such that $B_{f}= \Q$. \label{KCC}
\item For $f:[0,1]\di \R$ in $BV$, there is $x\in (0,1)$ where $f$ is continuous.\label{KDD}  
\item For $f:[0,1]\di \R$ in $BV$, there is $x\in (0,1)$ where $f(x)=\lim_{n\di \infty} B_{n}(f, x)$.\label{KEE}
\item For $f:[0,1]\di \R$ in $BV$, there is $x\in (0,1)$ such that \eqref{zolk} or \eqref{fereng}.\label{KF}  
\end{enumerate}
\end{thm}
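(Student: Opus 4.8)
The plan is to adapt the proof of Theorem~\ref{tomma} to the class $BV$, using Helly's selection theorem (Principle~\ref{HEY}) to supply the size bounds that, in Theorem~\ref{tomma}, came from $\QFAC^{0,1}$ (and would otherwise need a Waterman-variation hypothesis). Throughout, recall that $\ACAo$ includes $(\exists^{2})$; that a $BV$-function $f:[0,1]\di\R$ with variation bound $k_{0}$ is bounded and regulated (the one-sided limits exist in $\ACAo$ by a telescoping estimate on the variation, cf.\ \cite{dagsamX}*{Theorem~3.33}), so that Theorem~\ref{echtdecrux} applies to a rescaling of $f$; and that, for such $f$, the set $D_{k}$ from \eqref{drux} is finite with the \emph{explicit} size bound $k_{0}2^{k}$, since each element of $D_{k}$ forces a jump exceeding $\tfrac{1}{2^{k}}$ into the total variation (no choice is needed here). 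I would first dispatch the forward implications: for \eqref{KAA}$\Rightarrow$\eqref{KDD}, argue by contraposition as for \eqref{KA}$\Rightarrow$\eqref{KB} in Theorem~\ref{tomma}, namely if $f\in BV$ is discontinuous on all of $(0,1)\setminus\Q$ then $[0,1]\setminus\Q\subseteq D_{f}=\bigcup_{k}D_{k}$, so by the explicit bounds $[0,1]$ is height-countable, contradicting $\NIN_{\alt}$; hence there is $x\in(0,1)\setminus\Q$ where $f$ is continuous, and there $f(x)=f(x+)=f(x-)$, so Theorem~\ref{echtdecrux} gives $f(x)=\lim_{n}B_{n}(f,x)$ and \eqref{zolk}, \eqref{fereng} both hold. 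This yields \eqref{KDD}$\Rightarrow$\eqref{KEE}$\Rightarrow$\eqref{KF} (the last two trivially) and \eqref{KAA}$\Rightarrow$\eqref{KCC} (were $B_{f}=\Q$ as in \eqref{BF}, the irrational continuity point would lie in $B_{f}$).

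The core is the converse: each of \eqref{KCC}, \eqref{KDD}, \eqref{KEE}, \eqref{KF} implies $\NIN_{\alt}$, by contraposition. Assume $H:[0,1]\di\N$ is a height function for $[0,1]$, so every $A_{n}:=\{x\in[0,1]:H(x)<n\}$ is finite; note $A=[0,1]$. Put $\psi_{n}(x):=\tfrac{1}{2^{H(x)}}$ for $x\in A_{n}$ and $\psi_{n}(x):=0$ otherwise; these are $[0,1]$-valued, have finite support (hence lie in $BV$ with explicitly computable variation), and converge pointwise to $\psi(x):=\tfrac{1}{2^{H(x)}}$, which is $[0,1]$-valued with $\psi(x+)=\psi(x-)=0$ everywhere. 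If $\psi\in BV$ with bound $k_{0}$, then since $\psi$ jumps by exactly $\tfrac{1}{2^{H(x)}}$ at $x$, the set $D_{k}$ for $\psi$ equals $A_{k}$, so $A_{k}$ has size bound $k_{0}2^{k}$. If $\psi\notin BV$, apply Principle~\ref{HEY} to $(\psi_{n})_{n}$ to obtain $g_{0}\in\N^{\N}$ with $V_{0}^{1}(\psi_{n})\leq g_{0}(n)+1$, and combine this with the elementary lower bound $V_{0}^{1}(\psi_{n})\geq 2\sum_{x\in A_{n}\cap(0,1)}\psi_{n}(x)\geq |A_{n}\cap(0,1)|\cdot 2^{2-n}$ (witnessed by an explicit partition alternating through the finitely many points of $A_{n}$) to get a size bound $2^{n-2}(g_{0}(n)+1)+3$ for $A_{n}$. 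Either way we obtain naturals $s_{n}\geq 1$ bounding $|A_{n}|$; set $c_{m}:=\tfrac{1}{2^{m+1}s_{m+1}}$ and define, using $(\exists^{2})$, $\phi(x):=c_{H(x)}$ for all $x\in[0,1]$, and $\phi_{0}(x):=c_{H(x)}$ for $x\notin\Q$, $\phi_{0}(x):=0$ for $x\in\Q$.

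Next I would check these are counterexamples. A one-line estimate gives $\sum_{i}|\phi(x_{i})-\phi(x_{i+1})|\leq 2\sum_{i}\phi(x_{i})\leq 2\sum_{m}s_{m+1}c_{m}\leq 2$ over any partition, and likewise for $\phi_{0}$, so both are $BV$; the height property gives $\phi(x+)=\phi(x-)=\phi_{0}(x+)=\phi_{0}(x-)=0$ on $(0,1)$. Since $\phi(x)=c_{H(x)}>0$ for all $x$, $\phi$ is continuous nowhere, refuting \eqref{KDD}; moreover $B_{n}(\phi,x)$ equals the Bernstein polynomial of the Thomae-style function agreeing with $\phi$ on $\Q$ and vanishing off $\Q$, so by Theorem~\ref{echtdecrux} (at rational and irrational points of $(0,1)$ alike, as in the proof of Theorem~\ref{tomma}) we get $\lim_{n}B_{n}(\phi,x)=0\neq\phi(x)$ there, refuting \eqref{KEE}, and since $\phi(x)>\max(\phi(x+),\phi(x-))=0$ and $|\phi(x)-\lim_{n}B_{n}(\phi,x)|=\phi(x)>0=|\tfrac{\phi(x+)-\phi(x-)}{2}|$, also refuting \eqref{KF}. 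Finally, $\phi_{0}$ vanishes on $\Q$, so $B_{n}(\phi_{0},x)=\sum_{k\leq n}\phi_{0}(k/n)p_{n,k}(x)=0$ for every $x$, whence $B_{\phi_{0}}=\{x:\phi_{0}(x)=0\}=\Q$ (as $\phi_{0}>0$ on the irrationals, using $A=[0,1]$): this is a $BV$-witness against \eqref{KCC}. This closes the cycle.

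I expect the main obstacle to be the case $\psi\notin BV$: Principle~\ref{HEY} must be applied to a sequence whose limit genuinely fails to be $BV$ — note the naive choice $\mathbb{1}_{A_{n}}$ is useless, its limit being the constant function $1$ — after which one must justify the lower bound on $V_{0}^{1}(\psi_{n})$ by an explicit partition and verify that the rescaled $\phi,\phi_{0}$ are $BV$ with the stated bound. A secondary, more technical point is confirming that ``$BV\Rightarrow$ regulated'' and the explicit size bound for $D_{k}$ really go through in $\ACAo$ without recourse to $\QFAC^{0,1}$.
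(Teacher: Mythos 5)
Your proof is correct and takes essentially the same route as the paper's: the forward implications via the explicit size bound $k_{0}2^{k}$ for $D_{k}$ (so no appeal to $\QFAC^{0,1}$), and the reversals by contraposition, using Principle \ref{HEY} to extract size bounds for the sets $A_{n}$ and then building a rescaled Thomae-type $BV$ counterexample exactly as the paper does. The one divergence is in how Helly is invoked: the paper avoids your case distinction on whether $\psi\in BV$ by applying Principle \ref{HEY} to $\mathbb{1}_{A_{n}\setminus \Q}$, whose pointwise limit $\mathbb{1}_{\R\setminus\Q}$ is \emph{provably} not in $BV$ (it oscillates between $0$ and $1$ on every interval), counting the rational points of $A_{n}$ separately via a function definable from $\exists^{2}$; your dichotomy handles the same issue at the cost of an extra case, since when $\psi\in BV$ the jump argument yields the bounds directly. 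Your closing caveat about ``$BV\Rightarrow$ regulated'' and the computation of one-sided limits without $\QFAC^{0,1}$ applies equally to the paper's own (terse) proof.
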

\begin{proof}
That $\NIN_{\alt}$ implies the items \eqref{KCC}-\eqref{KF} follows as in the proof of Theorem~\ref{tomma}, namely by considering $D_{k}$ from \eqref{drux}. 
For $f\in BV$ with variation bounded by $1$, this set has at most $2^{n}$ elements, as each element of $D_{n}$ contributes at least $\frac{1}{2^{n}}$ to the total variation, by definition.
Hence, $D_{k}$ is finite \emph{without} the use of $\QFAC^{0,1}$.  Now, $\NIN_{\alt}$ implies that $D_{f}=\cup_{k\in \N}D_{k}$ is not all of $[0,1]$, i.e.\ $C_{f}\ne \emptyset$
and the other items follow.  
To derive $\NIN_{\alt}$ from items \eqref{KCC}-\eqref{KF}, %consider $\tilde{g}$ as in the proof of Theorem~\ref{tach} and follow the proof of Theorem \ref{tomma}.
%In more detail, 
let $H:[0,1]\di \N$ be a height-function for $[0,1]$ and consider the finite sets $A_{n}=\{x\in [0,1]:H(x)<n \}$ and $B_{n}=A_{n}\setminus \Q$.  
As above, $\mathbb{1}_{B_{n}}$ is in $BV$ but $\lim_{n\di \infty}\mathbb{1}_{B_{n}}=\mathbb{1}_{\R\setminus \Q}$ is not in $BV$.
Let $g_{0}\in \N^{\N}$ be the function provided by Helly's selection theorem and let $g_{1}\in \N^{\N}$ be such that $|A_{n}\cap \Q|\leq g_{1}(n)$ for all $n\in \N$, which is readily defined using $\exists^{2}$.  
By definition, $|A_{n}|\leq g_{0}(n)+g_{1}(n)+1$ for all $n\in \N$ and define 
\[\textstyle
g(x):=\frac{1}{2^{H(x)+1}}\frac{1}{(g_{0}(H(x)) +g_{1}(H(x)+1) +1 )}.
\] 
The latter is in $BV$ (total variation at most $1$) and is totally discontinuous, i.e.\ item \eqref{KDD} is false, as required.  The other implications follow in the same way. 
\end{proof}
We believe that Helly's selection theorem as in Principle \ref{HEY} is weak and in particular does not imply $\NIN_{[0,1]}$. 

\smallskip

Finally, we attempt to explain why seemingly related mathematical notions behave so differently in higher-order RM.
\begin{rem}[Second-order-ish functions]\label{second-order-ish}\rm
As discussed in detail in \cite{dagsamXIV}, quasi-continuous and cliquish $[0,1]\di [0,1]$ functions are intimately related from the pov of real analysis.  
Nonetheless, $\RCAo+\WKL_{0}$ proves that the former have a supremum while $\Z_{2}^{\omega}+\QFAC^{0,1}$ cannot prove the existence of a supremum for the latter. 
A similar observation can be made for many pairs of function classes, including cadlag and regulated functions.  

\smallskip

The crucial observation here is that for quasi-continuous and cadlag functions, the function value $f(x)$ for \emph{any} $x\in [0,1]$ is 
determined if we know $f(q)$ for any $q\in [0,1]\cap \Q$, provably in $\ACAo$.   We refer to such function classes as \emph{second-order-ish} as their definition comes with 
an obvious second-order approximation device.  By contrast, cliquish and regulated functions are not determined in this way, i.e.\ they apparently lack the latter device.  

\smallskip

With the gift of hindsight, properties of second-order-ish function classes can be established in $\RCAo$ extended with the Big Five systems, which is one of the main observations of \cite{dagsamXIV}.  
By contrast, basic properties of non-second-order-ish functions can often not be proved in $\Z_{2}^{\omega}$ or even $\Z_{2}^{\omega}+\QFAC^{0,1}$.  Thus, our equivalences seems to be robust as long 
as we stay within the second-order-ish function classes, or dually: within the non-second-order-ish ones.  
 
\end{rem}

\subsection{Pigeon hole principle for measure spaces}\label{piho}
%See \cite{klo} for approx thms for Riemann int. 
We introduce Tao's {pigeonhole principle for measure spaces} from \cite{taoeps} and obtain equivalences involving the approximation theorem for Riemann integrable functions via Bernstein polynomials.  
The latter is studied in \cite{klo} where it is claimed this result goes back to Picard (\cite{piti}). 
%Hence, the latter theorem is not provable in $\Z_{2}^{\omega}$ while the restrictions to Baire 1 and quasi-continuous functions and RM-closed sets are provable in $\ACAo$ by Theorems \ref{pinko} and \ref{qvl}. 

\smallskip

First of all, the pigeonhole principle as in $\PHP_{[0,1]}$ is studied in \cite{samBIG2}, with a number of equivalences for basic properties of Riemann integrable functions.  % we obtain a proof of $\PHP_{[0,1]}$ in $\Z_{2}^{\Omega}$ in Theorem \ref{pinko}.  
\begin{princ}[$\PHP_{[0,1]}$]\label{PHP}
If $ (X_n)_{n \in \N}$ is an increasing sequence of measure zero and closed sets of reals in $[0,1]$, then $ \bigcup_{n \in\N } X_n$ has measure zero.
\end{princ} 
%We recall that like in second-order RM, statements of the form `$E\subset [0,1]$ has measure $0$' can be made without introducing the Lebesgue measure.
By the the main result of \cite{trohim}, not all nowhere dense measure zero sets are the countable union of measure zero closed sets, i.e.\ $\PHP_{[0,1]}$ does not generate `all' measure zero sets. 

\smallskip

Secondly, fragments of the induction axiom are sometimes used in an essential way in second-order RM (see e.g.\ \cite{neeman}).  
An important role of induction is to provide `finite comprehension' (see \cite{simpson2}*{X.4.4}).
As in \cite{samBIG2}, we need the following fragment of finite comprehension, provable from the induction axiom. 
\begin{princ}[$\IND_{\R}$]
For $F:(\R\times \N)\di \N, k\in \N$, there is $X\subset \N$ such that
\[
(\forall n\leq k)\big[ (\exists x\in \R)(F(x, n)=0)\asa n\in X\big].
\]
\end{princ}
\noindent
In particular, the following rather important result seems to require $\IND_{\R}$.  
\begin{thm}[$\ACAo+\IND_{\R}$]\label{zonk}
For Riemann integrable $f:[0,1]\di \R$ with oscillation function $\osc_{f}:[0,1]\di \R$, 
the set $D_{k}:=\{x\in [0,1]:\osc_{f}(x)\geq \frac{1}{2^{k}}\}$ is measure zero for any fixed $k\in \N$.
\end{thm}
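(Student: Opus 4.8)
The plan is to adapt the classical proof of the ``content zero'' half of the Lebesgue criterion for Riemann integrability, i.e.\ the fact that each $D_{k}$ is content zero, while attending to two points specific to our setting. First, the interval oscillations $\osc_{f}([a,b])$ exist only \emph{virtually} over $\ACAo$, so the quantitative estimate has to be pushed through Riemann sums with hand-picked tags rather than through upper and lower Darboux sums. Second, and this is the genuine obstacle, recognising which subintervals of a fixed partition meet $D_{k}$ is \emph{not} decidable by $(\exists^{2})$: as a statement about the ``black box'' third-order functional $\osc_{f}$, the assertion ``$(\exists x\in (a,b))(\osc_{f}(x)\geq \tfrac{1}{2^{k}})$'' has $\Sigma^{1}_{1}$-in-$\osc_{f}$ complexity and deciding it outright would require $(\exists^{3})$; but we only need it for the \emph{finitely many} subintervals of a single partition, which is exactly the bounded comprehension supplied by $\IND_{\R}$.

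In detail, fix $k\in\N$ and $\eps>0$; it suffices to cover $D_{k}$ by finitely many open intervals of total length $<2\eps$. By Riemann integrability there is $\delta>0$ such that any two tagged partitions of $[0,1]$ of mesh $<\delta$ have Riemann sums within $\tfrac{\eps}{2^{k+1}}$ of each other. Fix a partition $P:0=t_{0}<t_{1}<\dots<t_{N}=1$ with rational nodes and mesh $<\delta$. Using $(\exists^{2})$ and the given $\osc_{f}$, define $F:(\R\times\N)\di\N$ by letting $F(x,i)=0$ if and only if $i\leq N$, $t_{i-1}<_{\R}x<_{\R}t_{i}$, and $\osc_{f}(x)\geq \tfrac{1}{2^{k}}$; each conjunct is decidable by $(\exists^{2})$ (the last one applied to the type-$1$ object $\osc_{f}(x)$, obtained from the parameter $\osc_{f}$), and $F$ respects $=_{\R}$ by function extensionality of $\osc_{f}$. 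Applying $\IND_{\R}$ with bound $N$ yields $G\subseteq\{0,\dots,N\}$ such that, for $i\leq N$, we have $i\in G$ iff there is $x\in(t_{i-1},t_{i})$ with $\osc_{f}(x)\geq\tfrac{1}{2^{k}}$. Since every $x\in D_{k}$ is either a node or lies in a unique open subinterval $(t_{i-1},t_{i})$ --- which then satisfies $i\in G$ --- we obtain $D_{k}\subseteq\big(\bigcup_{i\in G}(t_{i-1},t_{i})\big)\cup\{t_{0},\dots,t_{N}\}$.

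Finally, we bound $\sum_{i\in G}(t_{i}-t_{i-1})$. For $i\in G$, pick a witness $x\in(t_{i-1},t_{i})$ and $r>0$ with $B(x,r)\subseteq(t_{i-1},t_{i})$; the defining property of $\osc_{f}$ gives $\osc_{f}(B(x,r))\geq\osc_{f}(x)\geq\tfrac{1}{2^{k}}$, and unfolding the comparative meaning of $\sup_{B(x,r)}f-\inf_{B(x,r)}f\geq\tfrac{1}{2^{k}}$ provides $y_{i},z_{i}\in(t_{i-1},t_{i})$ with $f(y_{i})-f(z_{i})>\tfrac{1}{2^{k+1}}$; for $i\notin G$ put $y_{i}=z_{i}=\tfrac{t_{i-1}+t_{i}}{2}$. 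The finite sequences $(y_{i})_{i\leq N}$ and $(z_{i})_{i\leq N}$ are available using the induction axiom alone (no choice principle is needed, since a single $\eps$ and a single finite partition are involved). The tagged partitions $(P;(y_{i})_{i\leq N})$ and $(P;(z_{i})_{i\leq N})$ have mesh $<\delta$, so their Riemann sums differ by less than $\tfrac{\eps}{2^{k+1}}$; this difference equals $\sum_{i\in G}\big(f(y_{i})-f(z_{i})\big)(t_{i}-t_{i-1})>\tfrac{1}{2^{k+1}}\sum_{i\in G}(t_{i}-t_{i-1})$, whence $\sum_{i\in G}(t_{i}-t_{i-1})<\eps$. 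Covering the $N+1$ nodes by open intervals of total length $<\eps$ and adjoining the intervals $(t_{i-1},t_{i})$ for $i\in G$ gives a finite cover of $D_{k}$ of total length $<2\eps$; as $\eps>0$ was arbitrary, $D_{k}$ is measure zero in the sense of Definition~\ref{char}. The one step that truly exceeds $(\exists^{2})$, namely the extraction of $G$, is exactly where $\IND_{\R}$ is used.
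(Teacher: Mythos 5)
Your proof is correct and follows essentially the same route as the paper's (which sketches the argument via \cite{samBIG2}*{Cor.\ 3.4}): a fine partition, $\IND_{\R}$ to isolate the subintervals meeting $D_{k}$, and a re-tagging of those subintervals to produce two Riemann sums whose difference is at least $\frac{1}{2^{k+1}}$ times their total length, contradicting (or, in your direct version, bounding via) the Cauchy criterion for Riemann integrability. The only differences are organisational: you argue directly rather than by contradiction and you spell out the covering of the partition nodes and the monotonicity of $\osc_{f}$, which the paper leaves implicit.
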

\begin{proof}
The theorem follows from \cite{samBIG2}*{Cor.\ 3.4}.  A sketch of the proof of the latter is as follows: let $f:[0,1]\di \R$ be Riemann integrable with oscillation function $\osc_{f}:[0,1]\di \R$, 
such that $D_{k_{0}}:=\{x\in [0,1]:\osc_{f}(x)\geq \frac{1}{2^{k_{0}}}\}$ has measure $\eps>0$ for some fixed $k_{0}\in \N$.  
For a partition $P$ given as $0=x_{0}, t_{0},x_{1}, t_{1}, x_{1}, \dots, x_{k-1}, t_{k}, x_{k}=1 $ with small enough mesh, one obtains two partitions $P', P''$: as follows: in case $[x_{i}, x_{i+1}]\cap D_{k_{0}}\ne\emptyset$, replace $t_{i}$ by respectively $t_{i}'$ and $t_{i}''$ such that $|f(t_{i}')-f(t_{i}'')|\geq \frac{1}{2^{k_{0}}}$; these reals exist by definition of $D_{k_{0}}$.  
By definition, $S(f, P')$ and $S(f, P'')$ are at least $\eps/2^{k_{0}+1}$ apart, i.e.\ $f$ is not Riemann integrable, a contradiction.  The definition of $P', P''$ can be formalised using $\IND_{\R}$. 
\end{proof}
Thirdly, we establish Theorem \ref{duck555} where we recall the set $B_{f}$ from \eqref{BF}. 
\begin{thm}[$\ACAo+\IND_{\R}+\QFAC^{0,1}$]\label{duck555}
The following are equivalent.  
\begin{enumerate}
\renewcommand{\theenumi}{\alph{enumi}}
\item The pigeonhole principle for measure spaces as in $\PHP_{[0,1]}$.
\item \(Vitali-Lebesgue\) For Riemann integrable $f:[0,1]\di \R$ with an oscillation function, the set $ C_{f}$ has measure $1$.\label{tao0}
\item For Riemann integrable $f:[0,1]\di \R$ with an oscillation function, the set $ B_{f}$ has measure $1$.\label{tao00}
\item For Riemann integrable $f:[0,1]\di \R$ with an oscillation function, the set of all $x\in (0,1)$ such that \eqref{zolk} \(or \eqref{fereng}\) has measure $1$.\label{tao000}
\item For Riemann integrable and pointwise discontinuous $f:[0,1]\di \R$ with an oscillation function, the set $ B_{f}$ has measure $1$.\label{taowa}
%\item For Riemann integrable $f:[0,1]\di \R$ with an oscillation function, the set $ B_{f}$ has measure $1$.\label{tao1}
\item For Riemann integrable lsco $f:[0,1]\di \R$, the set $ B_{f}$ has measure $1$.\label{tao2}
\end{enumerate}
We can replace `usco' by `cliquish with an oscillation function' in the above.  
\end{thm}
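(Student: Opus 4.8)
The plan is to establish the cycle (a)$\Rightarrow$(b)$\Rightarrow$(a) together with (a)$\Rightarrow$(c),(d),(e),(f) and (c),(d),(e),(f)$\Rightarrow$(a), so that a single explicit function carries every reversal while Theorems~\ref{zonk} and~\ref{echtdecrux} carry the forward parts. I will use throughout that an oscillation function $\osc_{f}$ is, provably in $\ACAo$, upper semi-continuous: this follows from $\osc_{f}(x)=\lim_{k}\osc_{f}(\overline{B(x,1/2^{k})})$ together with the monotonicity of the right-hand side under shrinking the ball.

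\smallskip

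\emph{Forward direction.} Assume $\PHP_{[0,1]}$ and let $f:[0,1]\di\R$ be Riemann integrable with oscillation function $\osc_{f}$. For each $k$ the set $D_{k}:=\{x\in[0,1]:\osc_{f}(x)\geq 1/2^{k}\}$ is closed (by upper semi-continuity of $\osc_{f}$), carries a characteristic function (deciding $\osc_{f}(x)\geq 1/2^{k}$ is arithmetical, hence handled by $\exists^{2}$), and is null by Theorem~\ref{zonk} --- this is the step where $\IND_{\R}$ is used. Since $D_{k}\subseteq D_{k+1}$ and $\bigcup_{k}D_{k}=\{x:\osc_{f}(x)>0\}=D_{f}$, $\PHP_{[0,1]}$ gives that $D_{f}$ is null, i.e.\ $C_{f}$ has measure $1$, which is (b). At any $x\in C_{f}\cap(0,1)$ one has $f(x+)=f(x-)=f(x)$, so --- after the harmless rescaling making $f$ take values in $[0,1]$, possible since Riemann integrable functions are bounded --- Theorem~\ref{echtdecrux} yields $\lim_{n}B_{n}(f,x)=\frac{f(x+)+f(x-)}{2}=f(x)$; hence $x\in B_{f}$ and both \eqref{zolk} and \eqref{fereng} hold at $x$. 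Thus $C_{f}\cap(0,1)$ lies inside each of the sets named in (c)--(f), so each has measure $1$; for (f) I additionally invoke that a Riemann integrable lower semi-continuous function carries an oscillation function definable via $\exists^{2}$, as in \cite{samcsl23}.

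\smallskip

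\emph{Reverse direction.} Let $(X_{n})_{n\in\N}$ be an increasing sequence of closed, measure-zero subsets of $[0,1]$. Such sets are nowhere dense and, via $\WKL_{0}$ inside $\ACAo$, admit finite interval covers of arbitrarily small total length, so each $\mathbb{1}_{X_{n}}=F_{X_{n}}$ is Riemann integrable with integral $0$; hence so is the uniformly convergent sum $f:=\sum_{n}\frac{1}{2^{n}}\mathbb{1}_{X_{n}}$. As a uniform limit of finite sums of indicators of closed sets, $f$ is upper semi-continuous, and $\liminf_{y\to x}f(y)=0$ for every $x$ (removing $X_{1},\dots,X_{k}$ from any neighbourhood leaves points with $f$-value $\leq 2^{-k}$); therefore $\osc_{f}=f$, so $f$ \emph{is} its own oscillation function, $C_{f}=\{x:f(x)=0\}=[0,1]\setminus\bigcup_{n}X_{n}$, and $f$ is pointwise discontinuous because $\bigcup_{n}X_{n}$ has dense complement. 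Applying (b) to $f$ shows at once that $\bigcup_{n}X_{n}$ is null, which is $\PHP_{[0,1]}$. For the items mentioning $B_{f}$ --- that is (c),(d),(e), and (f) upon replacing $f$ by the lower semi-continuous function $-f$ --- I first reduce to the case $\bigcup_{n}X_{n}\cap\Q=\emptyset$: enumerate the $\Sigma_{1}^{0}$ set $\bigcup_{n}(X_{n}\cap\Q)$, cover it by an RM-open $U$ of total length $<\eps$, and pass to the still increasing, still null, now $\Q$-free closed sets $X_{n}\cap([0,1]\setminus U)$, whose union differs from $\bigcup_{n}X_{n}$ by less than $\eps$ in measure; proving the $\Q$-free case for every $\eps>0$ then yields the general case. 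When $\bigcup_{n}X_{n}\cap\Q=\emptyset$, the function $f$ vanishes at all rationals, so $B_{n}(f,x)=0$ for all $n$ and $x$, whence $B_{f}=C_{f}=[0,1]\setminus\bigcup_{n}X_{n}$; moreover whenever $f(x+),f(x-)$ exist at some $x\in\bigcup_{n}X_{n}$ they equal $0$, so neither \eqref{zolk} nor \eqref{fereng} holds there. Hence in each of (c),(d),(e) the relevant set lies inside $[0,1]\setminus\bigcup_{n}X_{n}$, and its having measure $1$ forces $\bigcup_{n}X_{n}$ to be null. The closing remark follows in the same way, since the $f$ above is also cliquish and the analogues of Theorems~\ref{zonk} and~\ref{echtdecrux} for cliquish functions equipped with an oscillation function hold as in \cite{samBIG2}.

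\smallskip

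\emph{Main obstacle.} The hard part will be the reverse direction, and within it precisely two bookkeeping points: that $f=\sum_{n}2^{-n}\mathbb{1}_{X_{n}}$ is genuinely Riemann integrable inside the weak base theory --- which rests on closed measure-zero subsets of $[0,1]$ being compact (so the measure-zero covers can be taken finite) and on Riemann integrability being preserved under uniform limits --- and that the oscillation function of $f$ is literally $f$ itself. The latter observation, namely that an upper semi-continuous $f\geq0$ with $\liminf_{y\to x}f(y)\equiv0$ equals $\osc_{f}$, is exactly what lets $f$ be fed into (b)--(f), all of which demand the oscillation function as data, without ever constructing one by hand; the $\Q$-avoidance trick then clears the $B_{f}$-flavoured items. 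The forward direction I expect to be essentially bookkeeping once Theorems~\ref{zonk} and~\ref{echtdecrux} are in place.
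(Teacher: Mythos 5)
Your overall architecture coincides with the paper's: the forward direction runs through Theorem~\ref{zonk} plus $\PHP_{[0,1]}$ to get $D_f$ null and then Theorem~\ref{echtdecrux} at continuity points, and the reverse direction feeds a single counterexample function into each item. Your $f=\sum_n 2^{-n}\mathbb{1}_{X_n}$ is, because the $X_n$ increase, just a constant multiple of the paper's function $h$ from \eqref{mopi}, and your observation that $\osc_f=f$ is the same fact the paper imports from \cite{samBIG2}. Two of your refinements are actually tighter than the paper's own write-up: the $\eps$-truncation $X_n\cap([0,1]\setminus U)$ preserves closedness when discarding the rationals (the paper's ``remove $\Q$'' does not literally do so), and passing to $-f$ to hit the lsco item \eqref{tao2} makes explicit a step the paper leaves implicit.

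There is, however, one genuine gap: the forward direction for item \eqref{tao2}. You invoke that ``a Riemann integrable lower semi-continuous function carries an oscillation function definable via $\exists^{2}$,'' and this is not available in the base theory. An lsco function can dip at a single irrational point (e.g.\ $-\mathbb{1}_{\{x_0\}}$ for irrational $x_0$ is lsco and Riemann integrable), so $\inf_{[a,b]}f$ is not recoverable from the rational values of $f$; only the supremum is, since lsco gives $f(x)\leq\liminf_{q\to x}f(q)$. Consequently $\osc_f([a,b])=\sup_{[a,b]}f-\inf_{[a,b]}f$ is not arithmetically definable from $f$ and $\exists^{2}$, and indeed the paper stresses elsewhere that $\Z_2^{\omega}$ cannot even prove usco/lsco functions are Baire~1, which is what would rescue such a definability claim. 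The paper's proof avoids this by replacing the sets $D_k$ with the sets $E_{q,l}$ of \eqref{henz}, defined purely via rational witnesses $f(z)>q+\frac{1}{2^{l}}$; lower semi-continuity is used exactly to show $E_{q,l}$ is closed and that the rational-witness condition is equivalent to the unrestricted one, after which the measure-zero argument of Theorem~\ref{zonk} and $\PHP_{[0,1]}$ go through as before. Your proof needs this (or an equivalent) substitute for item \eqref{tao2}; the remaining items are fine since there the oscillation function is supplied as data. A minor further caveat: your remark that $\bigcup_n X_n$ has dense complement in the general case is a Baire-category statement not provable in the base theory, but it is harmless, since item \eqref{tao0} does not require pointwise discontinuity and for item \eqref{taowa} your $\Q$-free reduction already yields continuity at every rational.
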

\begin{proof}
First of all, assume $\PHP_{[0,1]}$ and let $f:[0,1]\di \R$ be as in item \eqref{tao0} of the theorem.
By Theorem \ref{zonk}, each $D_{k}:=\{x\in [0,1]:\osc_{f}(x)\geq \frac{1}{2^{k}}\}$ has measure zero.
By \cite{samBIG2}*{Theorem 1.17}, $\osc_{f}$ is usco and hence $D_{k}$ is closed; both facts essentially follow by definition as well.  
Then $\PHP_{[0,1]}$ implies that $D_{f}=\cup_{k}D_{k}$ has measure zero, yielding item \eqref{tao0}.
By Theorem \ref{echtdecrux}, $B_{f}$ has measure one, i.e.\ item \eqref{tao00} follows, and the same for items \eqref{tao000}-\eqref{taowa}.  

\smallskip

For item \eqref{tao2}, we proceed in essentially the same way: $D_{f}$ exists in $\ACAo+\QFAC^{0,1}$ by \cite{samBIG2}*{Theorem 2.4}, and we just need to define some kind of `replacement' set for $D_{k}$. 
To this end, let $f:[0,1]\di \R$ be lsco and consider:
\be\label{henz}\textstyle
 f(x)\leq q \wedge (\forall N\in \N)(\exists z\in B(x, \frac{1}{2^{N}})~\cap~\Q)( f(z)>q+\frac{1}{2^{l}}  ).
\ee
Using $(\exists^{2})$, let $E_{q, l}$ be the set of all $x\in [0,1]$ satisfying \eqref{henz}.  
Since $f$ is lsco, \eqref{henz} is -essentially by definition- equivalent to:
\begin{center}
 \emph{the formula \eqref{henz} with `$~\cap~\Q$' omitted}.
 \end{center}
Intuitively, the set $E_{q,l}$ provides a `replacement' set for $D_{k}$. 
Indeed, since $f$ is lsco, the set $E_{q,l}$ is closed.  Moreover, we also have $D_{f}=\cup_{l\in \N, q\in \Q}E_{q,l}$, by the epsilon-delta definition of (local) continuity. 
That $E_{q, l}$ has measure zero is proved in the same way as for $D_{k}$ in Theorem \ref{zonk}. 
Again, $\PHP_{[0,1]}$ implies that $D_{f}$ has measure zero and Theorem \ref{echtdecrux} implies that $B_{f}$ has measure one, as required for item \eqref{tao2}.  
 
\smallskip

To derive $\PHP_{[0,1]}$ from item \eqref{tao2} (or the items \eqref{tao0}-\eqref{taowa}), let $(X_{n})_{n\in \N}$ be an increasing sequence of closed and measure zero sets.  
Since $\Q$ has an enumeration, if $\cup_{n\in \N}X_{n}\setminus \Q$ has measure zero, so does $\cup_{n\in \N}X_{n}$, say in $\ACAo$.  
Hence, we may assume that $\Q\cap \cup_{n\in \N}X_{n}=\emptyset$. 
Now consider the following function, which is usco, cliquish, and its own oscillation function by \cite{samBIG2}*{Theorems 1.16-18}:
\be\label{mopi}
h(x):=
\begin{cases}
0 & x\not \in \cup_{m\in \N}X_{m} \\
\frac{1}{2^{n+1}} &  x\in X_{n} \textup{ and $n$ is the least such number}
\end{cases},
\ee
and which satisfies $B_{n}(h, x)=0$ for all $x\in [0,1]$.
Now, $h$ is Riemann integrable with integral equal to zero on $[0,1]$, which can be proved using the obvious\footnote{Fix $\eps>0$ and $n_{0}\in \N$ such that $\frac{1}{2^{n_{0}}}<\eps$.  Then $\ACAo+\QFAC^{0,1}$ proves that for any partition $P$ with mesh $<\frac{1}{2^{n_{0}+2}}$, the Riemann sum $S(h, P)$ is $<\eps$ in absolute value.  To this end, cover $X_{n_{0}}$ by a sequence of intervals of total length at most $\frac{1}{2^{n_{0}+2}}$ and find a finite sub-covering.} epsilon-delta proof.  Moreover, since $\Q\cap \cup_{n\in \N}X_{n}=\emptyset$, $h$ is continuous at any rational and therefore pointwise discontinuous, i.e.\ the equivalence for item \eqref{taowa} readily follows.  
By item \eqref{tao2}, $B_{f}$ has measure $1$, implying that for almost all $x\in (0,1)$, we have $h(x)=\lim_{n\di \infty} B_{n}(h,x )=0$, i.e.\ $\cup_{n\in \N}X_{n}$ has measure zero, and $\PHP_{[0,1]}$ follows.
Note that item \eqref{tao000} also guarantees that $h(x)=0$ almost everywhere, i.e.\ $\cup_{n\in \N}X_{n}$ has measure zero.
For the final sentence of the theorem, recall that the function $h:[0,1]\di \R$ from \eqref{mopi} is cliquish.  
\end{proof}
In light of Theorem \ref{zonk}, Riemann integrable functions can \emph{almost} be proved to be continuous ae.  
However, $\PHP_{[0,1]}$ is needed to establish that $D_{f}=\cup_{k\in \N}D_{k}$ has measure zero.  
The restriction in item \eqref{taowa} is non-trivial as is it consistent with $\Z_{2}^{\omega}$ that there are Riemann integrable functions that are \emph{totally discontinuous}.  

\smallskip

As to generalisations of the previous theorem, there are a surprisingly large number of rather diverse equivalent definitions of `Baire 1' on the reals (\cites{leebaire,beren,koumer}), including \emph{$B$-class-1-measurability} and \emph{fragmentedness} by \cite{koumer}*{Theorem~2.3} and \cite{kura}*{\S34, VII}.   One readily shows that $h$ from \eqref{mopi} satisfies these definitions, say in $\ACAo$, i.e.\ the previous theorem can be formulated for these notions.  By contrast, the restriction of item \eqref{tao0} or \eqref{taowa} in Theorem \ref{duck555} to (the standard definition of) Baire 1 functions, can be proved in $\ACAo$ by \cite{samBIG2}*{Theorem 3.7}.  In general, the function $h$ from \eqref{mopi} is rather well-behaved and therefore included in many (lesser known than Baire 1) function classes.

\subsection{Baire category theorem}\label{BKT}
We introduce the Baire category theorem and obtain equivalences involving the approximation theorem for usco and cliquish functions via Bernstein polynomials.  
The RM of $\BCT_{[0,1]}$ as in Principle \ref{BCTI} and $\PHP_{[0,1]}$ is often similar (see \cite{samBIG2}), which is why we treat the former in less detail.  

\smallskip

First of all, we shall study the Baire category theorem formulated as follows.  
We have established a substantial number of equivalences in \cite{samBIG2} between this principle and basic properties of usco functions and related classes.
\begin{princ}[$\BCT_{[0,1]}$]\label{BCTI}
If $ (O_n)_{n \in \N}$ is a decreasing sequence of dense open sets of reals in $[0,1]$, then $ \bigcap_{n \in\N } O_n$ is non-empty.
\end{princ} 
\noindent
We assume that $O_{n+1}\subseteq O_{n}$ for all $n\in \N$ to avoid the use of induction to prove that a finite intersection of open and dense sets is again open and dense. 

\smallskip

Secondly, we have the following theorem where we recall the set $B_{f}$ from \eqref{BF}.  
\begin{thm}[$\ACAo$]\label{Y}
The following are equivalent.  
\begin{enumerate}
\renewcommand{\theenumi}{\alph{enumi}}
\item The Baire category theorem as in $\BCT_{[0,1]}$.\label{bao0}
\item For usco $f:[0,1]\di \R$, the set $ C_{f}$ is non-empty.\label{bao01}
\item For cliquish $f:[0,1]\di \R$ with an oscillation function, we have $ C_{f}\ne\emptyset$.\label{bao001}
\item For usco $f:[0,1]\di \R$, the set $ B_{f}$ is non-empty.\label{bao1}
\item For cliquish $f:[0,1]\di \R$ with an oscillation function, $ B_{f}$ is non-empty.\label{bao2}
%\item \(Volterra) There do not exist two \emph{pointwise discontinuous} $f, g:[0,1]\di \R$ with associated oscillation functions for which $B_{f}=[0,1]\setminus B_{g}$
% \label{bao3}
\end{enumerate}
\end{thm}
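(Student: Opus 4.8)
The plan is to follow the template set by the proofs of Theorems~\ref{echtdecrux}, \ref{tomma}, and \ref{duck555}: the forward implications go through oscillation functions and an application of $\BCT_{[0,1]}$ to the level sets of $\osc_{f}$, while the reverse implications go by contraposition using a Thomae-type function as in \eqref{mopi}. Since we work over $\ACAo$, the functional $(\exists^{2})$ is available throughout. In particular, for usco $f:[0,1]\di \R$ -- and more generally for any $f$ coming equipped with an oscillation function $\osc_{f}:[0,1]\di \R$, which forces local boundedness, hence boundedness on $[0,1]$ -- the function $\osc_{f}$ exists and is itself usco (the supremum of a usco function over a closed interval being computable from $(\exists^{2})$; see \cite{samBIG2}*{Theorems 1.16--1.18}).

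For $(a)\Rightarrow \eqref{bao01}$ and $(a)\Rightarrow \eqref{bao001}$, fix $f$ as in item~\eqref{bao01} or~\eqref{bao001} and put $D_{k}:=\{x\in [0,1]:\osc_{f}(x)\geq \tfrac{1}{2^{k}}\}$. Each $D_{k}$ is closed since $\osc_{f}$ is usco, and it is \emph{nowhere dense}: for usco $f$ this follows directly from the definition of $\osc_{f}$, and for cliquish $f$ it is essentially a restatement of cliquishness, namely that $\{x:\osc_{f}(x)<\tfrac{1}{2^{k}}\}$ is open and dense. Moreover $D_{f}=\cup_{k\in \N}D_{k}$ by the epsilon-delta definition of continuity. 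Hence $O_{k}:=(0,1)\setminus D_{k}$ is a decreasing sequence of dense open sets, and $\BCT_{[0,1]}$ produces a point of $\cap_{k\in\N}O_{k}=C_{f}\cap (0,1)$, which is items~\eqref{bao01} and~\eqref{bao001}. For the $B_{f}$-versions, note that at any $x_{0}\in C_{f}\cap (0,1)$ we trivially have $f(x_{0}+)=f(x_{0}-)=f(x_{0})$, so after an affine normalisation making $f$ take values in $[0,1]$ (legitimate by boundedness and linearity of $B_{n}(\cdot,x)$), Theorem~\ref{echtdecrux} gives $f(x_{0})=\lim_{n\di \infty}B_{n}(f,x_{0})$, i.e.\ $x_{0}\in B_{f}$. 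This proves $\eqref{bao01}\Rightarrow\eqref{bao1}$ and $\eqref{bao001}\Rightarrow\eqref{bao2}$ (and $(a)\Rightarrow\eqref{bao1}$, $(a)\Rightarrow\eqref{bao2}$).

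For the reverse direction I would argue by contraposition, mirroring the proof of Theorem~\ref{duck555}. Assume $\BCT_{[0,1]}$ fails, so $[0,1]=\cup_{n\in \N}C_{n}$ with $C_{n}:=[0,1]\setminus O_{n}$ closed, nowhere dense and increasing. Form the Thomae-type function $h$ as in \eqref{mopi}, i.e.\ $h(x):=\tfrac{1}{2^{n+1}}$ if $x\in C_{n}$ with $n$ least, and $h(x):=0$ otherwise; one checks exactly as in \cite{samBIG2} that $h$ is usco, cliquish, and its own oscillation function. Since each $C_{n}$ is nowhere dense while $\cup_{n}C_{n}=[0,1]$, $h$ is everywhere positive and everywhere discontinuous, so $C_{h}=\emptyset$, which already contradicts items~\eqref{bao01} and~\eqref{bao001}; thus each of~\eqref{bao01},~\eqref{bao001} implies $\BCT_{[0,1]}$. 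For items~\eqref{bao1},~\eqref{bao2} one needs in addition that $B_{h}\cap(0,1)=\emptyset$: here one follows Theorem~\ref{duck555} by first arranging $\Q\cap\cup_{n}C_{n}=\emptyset$ (replacing each $C_{n}$ by its intersection with the complement of a suitable open set), so that $h$ vanishes on $\Q$ and hence $B_{n}(h,\cdot)\equiv 0$ because Bernstein polynomials sample only rational arguments -- or, alternatively, one derives~\eqref{bao1},~\eqref{bao2} back to $\BCT_{[0,1]}$ via the already-established equivalences with~\eqref{bao01},~\eqref{bao001}.

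I expect the main obstacle to be precisely the step where the Baire-category argument departs from the measure-theoretic one in Theorem~\ref{duck555}. There ``$B_{f}$ has measure one'' is robust under discarding the null set $\Q$, whereas ``$B_{f}$ is non-empty'' (read as $B_{f}\cap(0,1)\neq\emptyset$) is \emph{not} robust under discarding the co-meagre open set one uses to avoid $\Q$; so for~\eqref{bao1},~\eqref{bao2} the cleanest route is likely to keep $h$ built directly from $(C_{n})$ with $\cup_{n}C_{n}=[0,1]$ (handling~\eqref{bao01}--\eqref{bao001} simultaneously, as above) and to supply a genuine estimate showing $\liminf_{n}B_{n}(h,x)<h(x)$ for every $x\in(0,1)$, or to route the implication through the $C_{f}$-items. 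Carrying out that estimate, together with the routine-but-non-trivial formalisation in $\ACAo$ of the nowhere-density of the level sets $D_{k}$ and of the usco/cliquish/self-oscillation properties of $h$, is where the real work lies; the rest is bookkeeping.
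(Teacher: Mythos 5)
Your template is the right one and several pieces are correct: the forward implication for cliquish $f$ with an oscillation function (closed, nowhere dense level sets $D_{k}$ of $\osc_{f}$, then $\BCT_{[0,1]}$ and Theorem \ref{echtdecrux}), and the implications \eqref{bao01}$\di$\eqref{bao0}, \eqref{bao001}$\di$\eqref{bao0} via the everywhere-discontinuous function $h$ from \eqref{mopi}. But two gaps remain. First, for \emph{usco} $f$ you assert that $\osc_{f}$ ``exists and is itself usco'' over $\ACAo$ because suprema of usco functions over closed intervals are computable from $\exists^{2}$. They are not: a usco function is not determined by its rational values (consider $\mathbb{1}_{\{x_{0}\}}$ for irrational $x_{0}$), and the paper stresses that supremum operators for discontinuous functions are strong (item \eqref{LBQ} of Theorem \ref{tach} is equivalent to $\enum$, and ``usco implies Baire 1'' fails in $\Z_{2}^{\omega}$ by Remark \ref{WML}). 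The paper therefore does \textbf{not} use $\osc_{f}$ for the usco items; it replaces $D_{k}$ by the closed sets $E_{q,l}$ defined via the rationally-sampled condition \eqref{henz}, which are definable from $\exists^{2}$ by semicontinuity and are nowhere dense for the same reason $D_{k}$ is.

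Second, the implications you correctly identify as the crux, \eqref{bao1}$\di$\eqref{bao0} and \eqref{bao2}$\di$\eqref{bao0}, are not completed: your contrapositive needs $B_{h}\cap(0,1)=\emptyset$ for $h$ built from a cover $\cup_{n}C_{n}=[0,1]$, and you explicitly leave the required estimate on $B_{n}(h,x)$ open (routing through the $C_{f}$-items does not close the loop either, since \eqref{bao1}$\di$\eqref{bao01} is not available). The paper's resolution is not a contraposition: given a decreasing sequence $(O_{n})_{n\in\N}$ of dense open sets, it first checks whether some rational lies in $\cap_{n}O_{n}$ (in which case $\BCT_{[0,1]}$ holds outright); otherwise $\Q\subset\cup_{n}X_{n}$ for $X_{n}:=[0,1]\setminus O_{n}$, and since Bernstein polynomials sample only rational arguments, $B_{n}(h,x)=B_{n}(\tilde h,x)$ where $\tilde h$ is the Thomae-type function \eqref{mopit} supported on $\Q$. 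As each $X_{n}$ is closed and nowhere dense, $\tilde h$ is continuous off $\Q$, so Theorem \ref{echtdecrux} plus uniform continuity of the $p_{n,k}$ gives $\lim_{n}B_{n}(h,x)=0$ for \emph{all} $x$; hence any $x\in B_{h}$ satisfies $h(x)=0$, i.e.\ $x\in\cap_{n}O_{n}$. This sidesteps both the need for $\cup_{n}X_{n}=[0,1]$ and the quantitative estimate you flag as ``the real work''.
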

\begin{proof}
For the implications \eqref{bao1}$\di$\eqref{bao0} and \eqref{bao2}$\di\eqref{bao0}$, let $ (O_n)_{n \in \N}$ be a decreasing sequence of dense open sets of reals in $[0,1]$.  
In case there is $q\in \Q $ with $q\in \cap_{n\in \N}O_{n}$, $\BCT_{[0,1]}$ follows.  For the case where $\emptyset =\Q\cap \left(\cap_{n\in \N}O_{n}\right)$, we proceed as follows. 
For $X_{n}:= [0,1]\setminus O_{n}$, consider $h$ as in \eqref{mopi}, which is usco, cliquish, and its own oscillation function by \cite{samBIG2}*{Theorems~1.16-18}.  Hence, the set $B_{h}$ from \eqref{BF} is non-empty given item~\eqref{bao1} or \eqref{bao2}.   We now show that $\lim_{n\di \infty}B_{n}(h, x)=0$ for all $x\in [0,1]$, finishing the proof.  To this end, recall that $\Q\subset \cup_{n\in \N}X_{n}$ and define $Q_{n}:= X_{n}\cap \Q $ where $\Q=\cup_{n\in \N}Q_{n}$.   
Now consider $\tilde{h}:[0,1]\di \R$ as follows  
\be\label{mopit}
\tilde{h}(x):=
\begin{cases}
0 & x\not \in \Q \\
\frac{1}{2^{n+1}} &  x\in Q_{n} \textup{ and $n$ is the least such number}
\end{cases},
\ee
which is really a modification of Thomae's function (see \cites{thomeke,dagsamXIV}).
Then $\tilde{h}$ is continuous on $[0,1]\setminus \Q$ since each $X_{n}$ is nowhere dense and closed.  
By Theorem \ref{echtdecrux}, we have $0=\tilde{h}(x)=\lim_{n\di\infty}B_{n}(\tilde{h}, x)$ for $x\in (0,1)\setminus \Q$.  
By the uniform continuity of the polynomials $p_{k, n}(x)$ on $[0,1]$, we also have $\lim_{n\di\infty}B_{n}(\tilde{h}, q)=0$ for $q\in [0,1]\cap \Q$.
Since Bernstein polynomials only invoke rational function values, we have $B_{n}(h, x)=B_{n}(\tilde{h}, x)$ for any $x\in [0,1]$ and $n\in \N$.
Hence, we conclude  $\lim_{n\di\infty}B_{n}({h}, x)=0$ for $x\in [0,1]$, as required. 

\smallskip

Finally, the implications \eqref{bao0}$\di$\eqref{bao01}$\di$\eqref{bao1} and \eqref{bao0}$\di$\eqref{bao001}$\di$\eqref{bao2} follow from \cite{samBIG2}*{Theorem 2.3 and 2.16} and Theorem \ref{echtdecrux}. 
For the first implication, one now defines a variation of $E_{q, l}$ based on \eqref{henz} to replace $D_{k}$, while the third one follows by the textbook proof.  
%For item \eqref{bao3}, \dots
\end{proof}
As also stated in \cite{samBIG2}, much to our own surprise, the `counterexample' function from \eqref{mopi} has nice properties \emph{that are provable in weak systems}, including the behaviour of the associated Bernstein polynomials.    

\smallskip

Finally, we discuss certain restrictions of the above theorems. 
\begin{rem}[Restrictions, trivial and otherwise]\label{WML}\rm
It should not be a surprise that suitable restrictions of principles that imply $\NIN_{[0,1]}$, are again provable from the (second-order) Big Five.  
We discuss three examples pertaining to the above where two are perhaps surprising.  

\smallskip

First of all, item \eqref{LB} in Theorem \ref{tach} is \emph{equivalent} to $\ATR_{0}$ if we restrict to functions with an arithmetical or $\Sigma_{1}^{1}$-graph by \cite{dagsamXIV}*{\S2.6}. 
The results in the latter pertain to bounded variation functions but are readily adapted to regulated functions, which also only have countably many points of discontinuity. 
Thus, the same restriction of item \eqref{LC} in Theorem \ref{tach} is {equivalent} to $\ATR_{0}$ in the same way. 
Similar results hold for the restriction to quasi-continuous functions.  

\smallskip

Secondly, item \eqref{LB} in Theorem \ref{tach} is provable\footnote{Here, $\ATR_{0}$ is just an upper bound and better results can be found in \cite{samBIG}*{\S3.5}.} from $\ATR_{0}$ (and extra induction) when restricted to Baire 1 functions by \cite{dagsamXIV}*{\S2.6}.
By Theorem \ref{echtdecrux}, the same holds for the restriction to Baire 1 functions of \eqref{LC} in Theorem \ref{tach}.  However, this restriction does not seem to be a `real' one as many a textbook tells us that:
\be\label{oip2}
\textup{\emph{regulated functions {are}  Baire 1 on the reals}.}
\ee
Of course, \eqref{oip2} is true \emph{but} $\Z_{2}^{\omega}+\QFAC^{0,1}$ cannot prove it by \cite{dagsamXIV}*{Theorem~2.35}.  
In general, the usual picture of the classical hierarchy of function classes looks \emph{very} different in weak logical systems and even in $\Z_{2}^{\omega}$.  
Indeed, by Theorem~\ref{tomma}, it is consistent with $\Z_{2}^{\omega}+\QFAC^{0,1}$ that there are regulated functions that are \emph{discontinuous everywhere}.  

\smallskip

Thirdly, similar to the previous paragraph, item \eqref{bao01} of Theorem \ref{Y} restricted to Baire 1 functions is provable in $\ACAo$ by \cite{samBIG2}*{Theorem 2.9}.  
Again, this restriction does not seem `real' as it is well-known that:
\be\label{oip}
\textup{\emph{usco functions {are}  Baire 1 on the reals},} 
\ee
but $\Z_{2}^{\omega}+\QFAC^{0,1}$ cannot prove \eqref{oip} by \cite{samBIG2}*{Cor~2.8}.  Unfortunately, the second-order coding of usco and lsco functions from \cite{ekelhaft} is such that 
the Baire 1 representation of usco and lsco is `baked into' the coding, in light of \cite{ekelhaft}*{\S6}.  
It would therefore be more correct to refer to the representations from \cite{ekelhaft} as \emph{codes for usco functions that are also Baire 1}.
What is worse, \eqref{oip} implies $\enum$ is therefore not an `innocent' background assumption in RM, as $\ACAo+\enum$ proves $\ATR_{0}$ and $\FIVE^{\omega}+\enum$ proves $\SIX$ (see Section \ref{senum}).

\end{rem}

\begin{ack}\rm 
We thank Anil Nerode for his valuable advice, especially the suggestion of studying nsc for the Riemann integral, and discussions related to Baire classes.
We thank Dave L.\ Renfro for his efforts in providing a most encyclopedic summary of analysis, to be found online.  
%We also thank the anonymous referee for the helpful suggestions.    
Our research was supported by the \emph{Klaus Tschira Boost Fund} via the grant Projekt KT43.
%Some results were obtained during the stimulating \emph{Workshop on Reverse Mathematics and Philosophy}
%in June 2022 in Paris, sponsored by the NSF FRG grant \emph{Collaborative Research: Computability Theoretic Aspects of Combinatorics}.
%
%workshop (ID 2046) on proof theory and constructive mathematics in Oberwolfach in early Nov.\ 2020.  
We express our gratitude towards the latter institution.    %Results in Appendix \ref{AAA} are due to myself and Dag Normann. 
\end{ack}
%

%\newpage

\appendix
\section{Technical Appendix: introducing Reverse Mathematics}\label{RMA}
We discuss the language of Reverse Mathematics (Section \ref{introrm}) and introduce -in full detail- Kohlenbach's base theory of \emph{higher-order} Reverse Mathematics (Section~\ref{rmbt}).
Some common notations may be found in Section \ref{kkk}.
\subsection{Introduction}\label{introrm}
We sketch some aspects of Kohlenbach's \emph{higher-order} RM (\cite{kohlenbach2}) essential to this paper, including the base theory $\RCAo$ (Definition \ref{kase}).  

\smallskip

First of all, in contrast to `classical' RM based on \emph{second-order arithmetic} $\Z_{2}$, higher-order RM uses $\L_{\omega}$, the richer language of \emph{higher-order arithmetic}.  
Indeed, while the former is restricted to natural numbers and sets of natural numbers, higher-order arithmetic can accommodate sets of sets of natural numbers, sets of sets of sets of natural numbers, et cetera.  
To formalise this idea, we introduce the collection of \emph{all finite types} $\mathbf{T}$, defined by the two clauses:
\begin{center}
(i) $0\in \mathbf{T}$   and   (ii)  If $\sigma, \tau\in \mathbf{T}$ then $( \sigma \di \tau) \in \mathbf{T}$,
\end{center}
where $0$ is the type of natural numbers, and $\sigma\di \tau$ is the type of mappings from objects of type $\sigma$ to objects of type $\tau$.
In this way, $1\equiv 0\di 0$ is the type of functions from numbers to numbers, and  $n+1\equiv n\di 0$.  Viewing sets as given by characteristic functions, we note that $\Z_{2}$ only includes objects of type $0$ and $1$.    

\smallskip

Secondly, the language $\L_{\omega}$ includes variables $x^{\rho}, y^{\rho}, z^{\rho},\dots$ of any finite type $\rho\in \mathbf{T}$.  Types may be omitted when they can be inferred from context.  
The constants of $\L_{\omega}$ include the type $0$ objects $0, 1$ and $ <_{0}, +_{0}, \times_{0},=_{0}$  which are intended to have their usual meaning as operations on $\N$.
Equality at higher types is defined in terms of `$=_{0}$' as follows: for any objects $x^{\tau}, y^{\tau}$, we have
\be\label{aparth}
[x=_{\tau}y] \equiv (\forall z_{1}^{\tau_{1}}\dots z_{k}^{\tau_{k}})[xz_{1}\dots z_{k}=_{0}yz_{1}\dots z_{k}],
\ee
if the type $\tau$ is composed as $\tau\equiv(\tau_{1}\di \dots\di \tau_{k}\di 0)$.  
Furthermore, $\L_{\omega}$ also includes the \emph{recursor constant} $\mathbf{R}_{\sigma}$ for any $\sigma\in \mathbf{T}$, which allows for iteration on type $\sigma$-objects as in the special case \eqref{special}.  Formulas and terms are defined as usual.  
One obtains the sub-language $\L_{n+2}$ by restricting the above type formation rule to produce only type $n+1$ objects (and related types of similar complexity).        

\subsection{The base theory of higher-order Reverse Mathematics}\label{rmbt}
We introduce Kohlenbach's base theory $\RCAo$, first introduced in \cite{kohlenbach2}*{\S2}.
\bdefi\label{kase} 
The base theory $\RCAo$ consists of the following axioms.
\begin{enumerate}
 \renewcommand{\theenumi}{\alph{enumi}}
\item  Basic axioms expressing that $0, 1, <_{0}, +_{0}, \times_{0}$ form an ordered semi-ring with equality $=_{0}$.
\item Basic axioms defining the well-known $\Pi$ and $\Sigma$ combinators (aka $K$ and $S$ in \cite{avi2}), which allow for the definition of \emph{$\lambda$-abstraction}. 
\item The defining axiom of the recursor constant $\mathbf{R}_{0}$: for $m^{0}$ and $f^{1}$: 
\be\label{special}
\mathbf{R}_{0}(f, m, 0):= m \textup{ and } \mathbf{R}_{0}(f, m, n+1):= f(n, \mathbf{R}_{0}(f, m, n)).
\ee
\item The \emph{axiom of extensionality}: for all $\rho, \tau\in \mathbf{T}$, we have:
\be\label{EXT}\tag{$\textsf{\textup{E}}_{\rho, \tau}$}  
(\forall  x^{\rho},y^{\rho}, \varphi^{\rho\di \tau}) \big[x=_{\rho} y \di \varphi(x)=_{\tau}\varphi(y)   \big].
\ee 
\item The induction axiom for quantifier-free formulas of $\L_{\omega}$.
\item $\QFAC^{1,0}$: the quantifier-free Axiom of Choice as in Definition \ref{QFAC}.
\end{enumerate}
\edefi
\noindent
Note that variables (of any finite type) are allowed in quantifier-free formulas of the language $\L_{\omega}$: only quantifiers are banned.
Recursion as in \eqref{special} is called \emph{primitive recursion}; the class of functionals obtained from $\mathbf{R}_{\rho}$ for all $\rho \in \mathbf{T}$ is called \emph{G\"odel's system $T$} of all (higher-order) primitive recursive functionals. 
%Moreover, we let $\INDD^{\omega}$ be the induction axiom for all formulas in $\L_{\omega}$.
\bdefi\label{QFAC} The axiom $\QFAC$ consists of the following for all $\sigma, \tau \in \textbf{T}$:
\be\tag{$\QFAC^{\sigma,\tau}$}
(\forall x^{\sigma})(\exists y^{\tau})A(x, y)\di (\exists Y^{\sigma\di \tau})(\forall x^{\sigma})A(x, Y(x)),
\ee
for any quantifier-free formula $A$ in the language of $\L_{\omega}$.
\edefi
As discussed in \cite{kohlenbach2}*{\S2}, $\RCAo$ and $\RCA_{0}$ prove the same sentences `up to language' as the latter is set-based and the former function-based.   
This conservation results is obtained via the so-called $\ECF$-interpretation, which we now discuss. 
\begin{rem}[The $\ECF$-interpretation]\rm
The (rather) technical definition of $\ECF$ may be found in \cite{troelstra1}*{p.\ 138, \S2.6}.
Intuitively, the $\ECF$-interpretation $[A]_{\ECF}$ of a formula $A\in \L_{\omega}$ is just $A$ with all variables 
of type two and higher replaced by type one variables ranging over so-called `associates' or `RM-codes' (see \cite{kohlenbach4}*{\S4}); the latter are (countable) representations of continuous functionals.  
%The definition of associate may be found just below Definition \ref{FTP}.
The $\ECF$-interpretation connects $\RCAo$ and $\RCA_{0}$ (see \cite{kohlenbach2}*{Prop.\ 3.1}) in that if $\RCAo$ proves $A$, then $\RCA_{0}$ proves $[A]_{\ECF}$, again `up to language', as $\RCA_{0}$ is 
formulated using sets, and $[A]_{\ECF}$ is formulated using types, i.e.\ using type zero and one objects.  
\end{rem}
In light of the widespread use of codes in RM and the common practise of identifying codes with the objects being coded, it is no exaggeration to refer to $\ECF$ as the \emph{canonical} embedding of higher-order into second-order arithmetic.

\subsection{Notations and the like}\label{kkk}
We introduce the usual notations for common mathematical notions, like real numbers, as also introduced in \cite{kohlenbach2}.  
\begin{defi}[Real numbers and related notions in $\RCAo$]\label{keepintireal}\rm~
\begin{enumerate}
 \renewcommand{\theenumi}{\alph{enumi}}
\item Natural numbers correspond to type zero objects, and we use `$n^{0}$' and `$n\in \N$' interchangeably.  Rational numbers are defined as signed quotients of natural numbers, and `$q\in \Q$' and `$<_{\Q}$' have their usual meaning.    
\item Real numbers are coded by fast-converging Cauchy sequences $q_{(\cdot)}:\N\di \Q$, i.e.\  such that $(\forall n^{0}, i^{0})(|q_{n}-q_{n+i}|<_{\Q} \frac{1}{2^{n}})$.  
We use Kohlenbach's `hat function' from \cite{kohlenbach2}*{p.\ 289} to guarantee that every $q^{1}$ defines a real number.  
\item We write `$x\in \R$' to express that $x^{1}:=(q^{1}_{(\cdot)})$ represents a real as in the previous item and write $[x](k):=q_{k}$ for the $k$-th approximation of $x$.    
\item Two reals $x, y$ represented by $q_{(\cdot)}$ and $r_{(\cdot)}$ are \emph{equal}, denoted $x=_{\R}y$, if $(\forall n^{0})(|q_{n}-r_{n}|\leq {2^{-n+1}})$. Inequality `$<_{\R}$' is defined similarly.  
We sometimes omit the subscript `$\R$' if it is clear from context.           
\item Functions $F:\R\di \R$ are represented by $\Phi^{1\di 1}$ mapping equal reals to equal reals, i.e.\ extensionality as in $(\forall x , y\in \R)(x=_{\R}y\di \Phi(x)=_{\R}\Phi(y))$.\label{EXTEN}
\item The relation `$x\leq_{\tau}y$' is defined as in \eqref{aparth} but with `$\leq_{0}$' instead of `$=_{0}$'.  Binary sequences are denoted `$f^{1}, g^{1}\leq_{1}1$', but also `$f,g\in C$' or `$f, g\in 2^{\N}$'.  Elements of Baire space are given by $f^{1}, g^{1}$, but also denoted `$f, g\in \N^{\N}$'.
\item For a binary sequence $f^{1}$, the associated real in $[0,1]$ is $\r(f):=\sum_{n=0}^{\infty}\frac{f(n)}{2^{n+1}}$.\label{detrippe}
%\item An object $\textbf{Y}^{0\di \rho}$ is called \emph{a sequence of type $\rho$ objects} and also denoted $\textbf{Y}=(Y_{n})_{n\in \N}$ or $\textbf{Y}=\lambda n. Y_{n}$ where $Y_{n}:=\textbf{Y}(n)$ for all $n^{0}$.
\item Sets of type $\rho$ objects $X^{\rho\di 0}, Y^{\rho\di 0}, \dots$ are given by their characteristic functions $F^{\rho\di 0}_{X}\leq_{\rho\di 0}1$, i.e.\ we write `$x\in X$' for $ F_{X}(x)=_{0}1$. \label{koer} 
\end{enumerate}
\end{defi}
For completeness, we list the following notational convention for finite sequences.  
\begin{nota}[Finite sequences]\label{skim}\rm
The type for `finite sequences of objects of type $\rho$' is denoted $\rho^{*}$, which we shall only use for $\rho=0,1$.  
Since the usual coding of pairs of numbers goes through in $\RCAo$, we shall not always distinguish between $0$ and $0^{*}$. 
Similarly, we assume a fixed coding for finite sequences of type $1$ and shall make use of the type `$1^{*}$'.  
In general, we do not always distinguish between `$s^{\rho}$' and `$\langle s^{\rho}\rangle$', where the former is `the object $s$ of type $\rho$', and the latter is `the sequence of type $\rho^{*}$ with only element $s^{\rho}$'.  The empty sequence for the type $\rho^{*}$ is denoted by `$\langle \rangle_{\rho}$', usually with the typing omitted.  

\smallskip

Furthermore, we denote by `$|s|=n$' the length of the finite sequence $s^{\rho^{*}}=\langle s_{0}^{\rho},s_{1}^{\rho},\dots,s_{n-1}^{\rho}\rangle$, where $|\langle\rangle|=0$, i.e.\ the empty sequence has length zero.
For sequences $s^{\rho^{*}}, t^{\rho^{*}}$, we denote by `$s*t$' the concatenation of $s$ and $t$, i.e.\ $(s*t)(i)=s(i)$ for $i<|s|$ and $(s*t)(j)=t(|s|-j)$ for $|s|\leq j< |s|+|t|$. For a sequence $s^{\rho^{*}}$, we define $\overline{s}N:=\langle s(0), s(1), \dots,  s(N-1)\rangle $ for $N^{0}<|s|$.  
For a sequence $\alpha^{0\di \rho}$, we also write $\overline{\alpha}N=\langle \alpha(0), \alpha(1),\dots, \alpha(N-1)\rangle$ for \emph{any} $N^{0}$.  By way of shorthand, 
$(\forall q^{\rho}\in Q^{\rho^{*}})A(q)$ abbreviates $(\forall i^{0}<|Q|)A(Q(i))$, which is (equivalent to) quantifier-free if $A$ is.   
\end{nota}

\begin{bibdiv}
\begin{biblist}
\bibselect{allkeida}
\end{biblist}
\end{bibdiv}
\bye